\newtheorem*{thm*}{Theorem}
\newtheorem{thm}{Theorem}[section]
\newtheorem{quest}[thm]{Question}
\newtheorem{problem}[thm]{Problem}
\newtheorem{cor}[thm]{Corollary}
\newtheorem{lemma}[thm]{Lemma}
\newtheorem{prop}[thm]{Proposition}
\theoremstyle{definition}
\newtheorem{definition}[thm]{Definition}
\newtheorem{remark}[thm]{Remark}
\newtheorem{notation}[thm]{Notations}
\newtheorem{example}[thm]{Example}
\newtheorem*{quest*}{Question}
\newtheorem*{claim1}{Claim 1}
\newtheorem*{claim2}{Claim 2}
\newtheorem*{claim3}{Claim 3}
\newcommand{\R}{\mathbb{R}}
\newcommand{\W}{\mathcal{W}}
\newcommand{\C}{\mathcal{C}}
\DeclareMathOperator{\length}{length}
\DeclareMathOperator{\PH}{PH}
\DeclareMathOperator{\Stable}{S}
\numberwithin{equation}{section}
\begin{document}

\title[Partially hyperbolic diffeomorphisms and center fixing]{Partially hyperbolic diffeomorphisms that are center fixing}

\begin{abstract}
We show that every transitive dynamically coherent partially hyperbolic diffeomorphism with a one-dimensional center foliation $\W^c$ satisfying that $f(W)=W$ for every leaf $W\in \W^c$ is a discretized Anosov flow.
\end{abstract}

\author{Santiago Martinchich}
\address{IESTA, FCEA, Universidad de la Rep\'ublica, Uruguay
\vspace{-0.20cm}}

\address{PEDECIBA, MEC-UDELAR, Uruguay\vspace{0.10cm}}

\email{santiago.martinchich@fcea.edu.uy}
 	 	
\thanks{S. M. was partially supported by IDEX Paris-Saclay ANR-11-IDEX-0003-02, ERC project 692925 NUHGD, CSIC (UDELAR) and `Fondo Carlos Vaz Ferreira 2023´ (MEC, Uruguay).}

\maketitle

%\begin{center} \today \end{center}

\section{Introduction}
A diffeomorphism $f:M\to M$ in a closed Riemannian manifold $M$ is called \emph{partially hyperbolic} if it preserves a continuous $Df$-invariant splitting $TM=E^s\oplus E^c \oplus E^u$ such that for some positive integer $\ell>0$ one has that 
\begin{center} $
\max\{\|Df^\ell_xv^s\|,\|Df^{-\ell}_xv^u\|\}< \frac{1}{2}$ \hspace{0.2cm} and \hspace{0.2cm}
$\|Df^{\ell}_xv^s\|<\|Df^{\ell}_xv^c\|< \|Df^{\ell}_xv^u\|$
\end{center}
for every $x\in M$ and unit vectors $v^s\in E^s(x)$, $v^c\in E^c(x)$ and $v^u\in E^u(x)$.

We denote by $\PH(M)$ the set of partially hyperbolic systems in $M$ and by $\PH_{c=1}(M)$ the ones such that $\dim(E^c)=1$.

\begin{definition}\label{defDAFnew} A map $f\in \PH_{c=1}(M)$ is called a \emph{discretized Anosov flow} if there exist a continuous flow $X_t:M\to M$, with $\frac{\partial X_t}{\partial t}|_{t=0}$ a continuous vector field without singularities, and a continuous function $\tau:M\to \R$ such that 
$$f(x)=X_{\tau(x)}(x)$$
for every $x\in M$.
\end{definition}

A map $f\in \PH(M)$ is called \emph{dynamically coherent} if there exist $f$-invariant foliations $\W^{cs}$ and $\W^{cu}$ tangent to $E^s\oplus E^c$ and $E^c\oplus E^u$, respectively. If this is the case, the foliation $\W^c$ given by the connected components of the intersection between $\W^{cs}$-leaves and $\W^{cu}$-leaves is a $f$-invariant foliation tangent to $E^c$. We say that $f\in \PH(M)$ is a \emph{dynamically coherent center fixing} map if the center foliation $\W^c$ satisfies that $f(W)=W$ for every leaf $W\in \W^c$.

It follows from \cite{Mar} that every discretized Anosov flow is a dynamically coherent center fixing map such that the leaves of the center foliation $\W^c$ are the orbits of the flow $X_t$. The goal of this article is to show the converse whenever $\W^c$ is a transitive one-dimensional foliation:

\begin{thm}\label{thm1} Suppose $f\in \PH_{c=1}(M)$ is a dynamically coherent center fixing map such that $\W^c$ has a dense leaf. Then $f$ is a discretized Anosov flow.
\end{thm}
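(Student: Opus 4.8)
\emph{Plan.} Since Definition~\ref{defDAFnew} only asks for \emph{some} flow $X_t$ and \emph{some} continuous $\tau$ realizing $f$ as a ``reparametrized'' map, the natural candidate for $X_t$ is the flow obtained by running along the center leaves at unit speed, and essentially the whole difficulty is to show that the resulting displacement function is continuous. Concretely: as a preliminary step I would establish that $E^c$ carries a continuous orientation preserved by $f$ — this is where the transitivity hypothesis first enters (it may be convenient to pass through the orientation double cover of $M$ and to $f^2$, checking that the conclusion transfers back, which it does because the within-leaf displacement of $f$ is bounded once that of $f^2$ is). Granting this, fix a continuous unit vector field $Y$ spanning $E^c$; since $Y$ is non-vanishing and tangent to the continuous foliation $\W^c$ by uniformly $C^1$ curves, it integrates to a continuous flow $X_t\colon M\to M$ whose orbits are exactly the leaves of $\W^c$ and whose generator has no singularities. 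Because $f$ is center fixing, for every $x$ on a non-compact leaf $W\in\W^c$ there is a unique $\tau(x)\in\R$ with $f(x)=X_{\tau(x)}(x)$; on the (possibly present) compact center leaves $\tau$ is a priori defined only modulo the period, and part of the bookkeeping is to select a coherent global branch, which is possible once the values of $\tau$ on nearby non-compact leaves are controlled. With this setup, \emph{$f$ is a discretized Anosov flow precisely when $\tau$ is continuous}, so it remains to prove continuity of $\tau$.

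The continuity of $\tau$ reduces to a single a priori estimate. Since $X_t$ has unit speed, $|\tau(x)|$ equals $d_{\W^c}(x,f(x))$, the length of the center-leaf arc joining $x$ to $f(x)$, for $x$ on a non-compact leaf. If $x_n\to x$ then $f(x_n)\to f(x)$, and if the numbers $\tau(x_n)$ are bounded then, along a subsequence, $\tau(x_n)\to L$ and $f(x_n)=X_{\tau(x_n)}(x_n)\to X_L(x)$; comparing with $f(x_n)\to f(x)=X_{\tau(x)}(x)$ and using that distinct times give distinct points on a non-compact $X$-orbit forces $L=\tau(x)$, whence $\tau(x_n)\to\tau(x)$. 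So $\tau$ is continuous as soon as
\[
\sup_{x\in M}\, d_{\W^c}\bigl(x,f(x)\bigr)<\infty,
\]
that is, as soon as $f$ moves points by a uniformly bounded amount \emph{inside} their center leaves (and, $M$ being compact, this bound is essentially also necessary). Thus the theorem reduces to this uniform displacement bound, modulo the routine but nontrivial treatment of compact center leaves.

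Proving the uniform displacement bound is, I expect, the main obstacle, and it is where the hyperbolicity and the transitivity combine. Suppose it fails: along a subsequence there are $x_n\to x$ with $f(x_n)\to f(x)$ but $d_{\W^c}(x_n,f(x_n))\to\infty$ — the center leaves carry arbitrarily long arcs whose endpoints $x_n$ and $f(x_n)$ nevertheless return close together in $M$. Such an arc lies inside the $f$-invariant center-stable leaf $\W^{cs}(x_n)$ and the $f$-invariant center-unstable leaf $\W^{cu}(x_n)$, in which $f$ uniformly contracts, respectively expands, the strong direction. The plan is to iterate $f$ forward and backward and use this contraction/expansion, together with the local product structure of these weak leaves and the structure of their foliation by center curves for a dynamically coherent map, to rule out a long center arc that almost closes up: such an arc would have to fold against the strong foliation inside the weak leaf in a way incompatible with its topology. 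Transitivity enters to reduce this to a statement one only has to verify near the dense center leaf $W_0$ (with $f(W_0)=W_0$): the recurrence of $W_0$ allows one to transport the local control — or the relevant (semi)periodic behavior — to an arbitrary point of $M$, so that a bound near a single leaf propagates to a global bound. Once the uniform displacement bound is in hand, $\tau$ is continuous by the previous paragraph, the pair $(X_t,\tau)$ exhibits $f$ as a discretized Anosov flow, and one finally transfers this conclusion back from any cover or power of $f$ used in the preliminary step.
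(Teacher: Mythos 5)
Your reduction steps are fine and agree with the paper: passing to a double cover and a power of $f$ to get an oriented, orientation-preserved $\W^c$ (Lemmas \ref{lemmafnDAFthenfDAF}--\ref{lemmaWcorientableandfpreservesorient}), and reducing the theorem to a uniform bound on the center displacement $\rho(x)=d_c(x,f(x))$, which is exactly Proposition \ref{propcenterfixingL} (quoted from [M23]) together with Remark \ref{rmkDAFiftaubounded}. But from that point on your proposal contains only a plan, and the plan as stated does not work; the uniform displacement bound \emph{is} the entire content of the paper (Sections \ref{sectioncenterdisplandcenterflow}--\ref{sectionrhobounded}), and the two mechanisms you invoke for it are not sound. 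First, the idea that a very long center arc whose endpoints nearly close up must ``fold against the strong foliation in a way incompatible with its topology'' is not something you can rule out directly: nothing in partial hyperbolicity alone forbids a center leaf from spending an arbitrarily long time before returning near its starting point, and the homeomorphism examples of Section \ref{sectionhomeos} (in particular Example \ref{ex2}) show that leafwise displacement really can be locally unbounded for leaf-fixing maps, so a genuinely dynamical argument is required. Second, your use of the dense leaf --- ``a bound near the dense leaf $W_0$ propagates by recurrence to a global bound'' --- fails because $\rho$ is only lower semicontinuous (Proposition \ref{proprhosemicont}): points close to a point with small displacement can a priori have enormous displacement (the long-thin-tube phenomenon), so recurrence of one leaf transports no upper bound anywhere.

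What the paper actually does, and what is missing from your sketch, is the following chain: semicontinuity of $\rho$ gives a residual, $\W^c$-saturated set $Y$ of continuity points on line leaves (Proposition \ref{propYresidualWcsatpathconn}); a contraction/holonomy argument shows center leaves cannot self-accumulate inside $cs$ or $cu$ leaves (Proposition \ref{propnouautobads2}), which yields expansivity of the center flow (Proposition \ref{propexpansiveflow}); these two facts are then used to propagate $s$- and $u$-continuity of $\rho$ from a point of $Y$ to every point of its $\alpha^c$- and $\omega^c$-limit sets (Proposition \ref{propalphaomega_c}); the dense-leaf hypothesis enters only here, to choose a generic $x_0\in Y$ with $\alpha^c(x_0)=\omega^c(x_0)=M$, giving continuity of $\rho$ at every line leaf (Corollary \ref{corestrellatransitive}). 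Finally, the compact center leaves --- which you defer as ``routine bookkeeping'' --- require a separate and nontrivial argument: one must show the segments $[y,f(y)]_c$ stay inside the stable saturation of the circle leaf (Lemma \ref{lemma[y,f(y)]_csubsetW^s(C)}, a completeness-type statement) and then bound $\rho$ on a full neighborhood of each circle leaf (Proposition \ref{proprhoboundednearcircleleaf}); only then does compactness of $M$ give the global bound. None of these ingredients (no self-accumulation, expansivity, propagation along $\alpha^c/\omega^c$, the circle-leaf analysis) appears in your proposal, so the central step of the theorem remains unproved.
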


If a dynamically coherent center fixing map $f\in \PH_{c=1}(M)$ has a point $x$ whose orbit is dense in $M$, it is immediate that $\W^c(x)$ is also dense. One obtains:

\begin{cor}\label{cor1} In $\PH_{c=1}(M)$ the class of transitive discretized Anosov flows coincides with the class of transitive  dynamically coherent center fixing maps.
\end{cor}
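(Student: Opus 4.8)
The plan is to deduce Corollary \ref{cor1} directly from Theorem \ref{thm1}, from the consequence of \cite{Mar} recorded after Definition \ref{defDAFnew}, and from the elementary remark stated just above the corollary; there is no serious obstacle, as all the substance already sits in those inputs. Two inclusions must be checked.

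First I would handle the inclusion of transitive discretized Anosov flows into transitive dynamically coherent center fixing maps. By the discussion following Definition \ref{defDAFnew}, which cites \cite{Mar}, every discretized Anosov flow is a dynamically coherent center fixing map (its center foliation being the foliation by flow orbits). Hence a transitive discretized Anosov flow is automatically a transitive dynamically coherent center fixing map, and this inclusion requires no further argument.

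For the reverse inclusion, let $f\in \PH_{c=1}(M)$ be a transitive dynamically coherent center fixing map. Transitivity provides a point $x\in M$ whose orbit $\{f^n(x):n\in\mathbb{Z}\}$ is dense in $M$. Because $f(W)=W$ for every leaf $W\in \W^c$, in particular $f(\W^c(x))=\W^c(x)$, so $f^n(x)\in \W^c(x)$ for all $n\in\mathbb{Z}$; thus $\W^c(x)$ contains a dense subset of $M$ and is therefore a dense leaf of $\W^c$. Theorem \ref{thm1} then applies and shows that $f$ is a discretized Anosov flow, which, together with the transitivity assumed at the outset, places $f$ in the class of transitive discretized Anosov flows.

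Since these two inclusions together exhaust the statement, the corollary follows. The only step with any content beyond quoting Theorem \ref{thm1} and \cite{Mar} is the passage from a dense orbit to a dense center leaf, which is the one-line observation already noted before the statement; so the proof is short and presents no real difficulty.
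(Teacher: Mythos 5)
Your proposal is correct and follows exactly the paper's intended argument: the inclusion of discretized Anosov flows into dynamically coherent center fixing maps comes from \cite{Mar} as noted after Definition \ref{defDAFnew}, and the converse uses the observation that a dense $f$-orbit lies in a single (hence dense) $\W^c$-leaf by the center fixing property, so Theorem \ref{thm1} applies. Nothing is missing.
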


Theorem \ref{thm1} gives a partial answer to the following question, which remains open in general:

\begin{quest}\label{questcenterfix}
Suppose $f\in \PH_{c=1}(M)$ admits a foliation $\W^c$ tangent to $E^c$ such that $f(W)=W$ for every leaf $W\in \W^c$. Is $f$ a discretized Anosov flow?
\end{quest}

It is worth noting \cite[Question 1.3.]{Gog} where a similar question has been posed. In dimension 3, a positive answer to Question \ref{questcenterfix} was given in \cite[Theorem D]{Mar} provided $f$ is transitive. Also in \cite{BFFP}, provided the lift of $\W^c$ to the universal cover is fixed by some lift of $f$ (though with an argument specific for dimension 3).

Partially hyperbolic diffeomorphisms that are \emph{center fixing} were studied in \cite{AVW} (see also \cite[Section 8]{W}. Discretized Anosov flows have been profusely studied, at least since \cite{BW}, as the class of partially hyperbolic that naturally capture the behavior seen near the time 1 map of Anosov flow (see \cite{Mar} and references therein).

In dealing with Question \ref{questcenterfix} we found many analogies with the problem of whether every center foliation $\W^c$ by compact leaves is uniformly compact (meaning that there is uniform bound on the volume of leaves). This problem was studied in \cite{Car}, \cite{Gog} and \cite{DMM} and remains open in general.

One may ask whether every homeomorphism leaving invariant (as a set) every orbit of a continuous flow $X_t$ needs to be the time $\tau$ of $X_t$ for some continuous function $\tau:M\to \mathbb{R}$. We elaborate on this problem and give a few examples where such a function $\tau$ can not be constructed in Section \ref{sectionhomeos}, which is independent of the rest of the text.

The bulk of the article, from Section \ref{sectioncenterdisplandcenterflow} to Section \ref{sectionrhobounded}, is devoted to the proof of Theorem \ref{thm1}.

\vspace{0.3cm}
{\small \emph{Acknowledgments:} This paper was the last part of my PhD thesis. The starting point for this work was an early draft by A. Gogolev and R. Potrie. I want to thank my advisors S. Crovisier and R. Potrie. And I want to thank P. Lessa for his interest in the problem and helpful discussions. }

\section{Center flow and the center displacement function}\label{sectioncenterdisplandcenterflow}

This section initiates the proof of Theorem \ref{thm1}, which will take place until the end of Section \ref{sectionrhobounded}.

We will suppose from now on that $f\in \PH_{c=1}(M)$ admits $f$-invariant foliations $\W^{cs}$ and $\W^{cu}$ such that $f(W)=W$  for every leaf $W$ in the center foliation $\W^c:=\W^{cs}\cap \W^{cu}$. Let us suppose also that $\W^c$ has a dense leaf (though this hypothesis will only be needed in Corollary \ref{corestrellatransitive} and much of the text is independent of it, see Remark \ref{rmkhypWctransitive}). The goal is to show that $f$ must be a discretized Anosov flow.

Recall that it follows from \cite{HPS} that the bundles $E^s$ and $E^u$ uniquely integrate to $f$-invariant foliations $\W^s$ and $\W^u$, respectively. The leaves of $\W^{cs}$ and $\W^{cu}$ are $C^1$ immersed submanifolds of $M$ that are subfoliated by the leaves of $\W^s$ and $\W^u$, respectively, which are also $C^1$ immersed submanifolds. Also the leaves of $\W^c$ are $C^1$ immersed submanifolds which subfoliate the leaves of $\W^{cs}$ and $\W^{cu}$.

\begin{notation}
If $\W$ is any of these foliations, we will denote by $\W_R(x)$ the ball of radius $R>0$ and center $x$ in the leaf $\W(x)$ with respect to the intrinsic metric induced by the Riemannian metric in $M$. Moreover, if $A$ is any subset of $M$ we will denote by $\W(A)$ the saturation of $A$ by $\W$-leaves, that is, the set $\bigcup_{y\in A}\W(y)$. We will also denote by $\W_\delta(A)$ the set $\bigcup_{y\in A}\W_\delta(y)$.
\end{notation}

The first thing to point out, before properly starting the proof of Theorem \ref{thm1}, is that the center fixing problem is already solved provided there is a uniform bound on the center distance between any point and its image:

\begin{prop}[\cite{Mar}]\label{propcenterfixingL} Suppose $g\in\PH_{c=1}(M)$. The following are equivalent:
\begin{enumerate}[label=(\roman*)]
\item\label{item1prop2} The map $g$ is a discretized Anosov flow.
\item \label{item2prop2}  There exists a center foliation $\W^c$ and a constant $L>0$  such that $g(x)\in \W^c_L(x)$ for every $x\in M$.
\end{enumerate}
\end{prop}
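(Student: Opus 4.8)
The plan is to prove the nontrivial implication \ref{item2prop2} $\Rightarrow$ \ref{item1prop2}, since \ref{item1prop2} $\Rightarrow$ \ref{item2prop2} is immediate: if $g$ is a discretized Anosov flow then $g(x)=X_{\tau(x)}(x)$ lies in $\W^c(x)$, and by compactness of $M$ and continuity of $\tau$ and of the flow, the center displacement $d_{\W^c}(x,g(x))$ is uniformly bounded by some $L>0$. So assume we are given a center foliation $\W^c$ and a constant $L$ with $g(x)\in\W^c_L(x)$ for all $x$. I would first fix a unit-speed parametrization of center leaves: since $\dim E^c=1$ and $E^c$ is orientable along each leaf, choose (at least locally, then argue about global orientability) a continuous unit vector field $e^c$ spanning $E^c$, and let $Y_t$ be its flow; the orbits of $Y_t$ are exactly the leaves of $\W^c$ traversed at unit speed, so there is a well-defined continuous ``signed center-distance'' cocycle and, because $g$ preserves each leaf, a continuous function $\tau:M\to\R$ with $|\tau|\le L$ such that $g(x)=Y_{\tau(x)}(x)$. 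The content of the proposition is therefore \emph{not} the existence of such a $\tau$ for the flow $Y_t$ — that is cheap — but rather upgrading $Y_t$ to a flow for which $g$ is genuinely a \emph{discretized Anosov flow} in the sense of Definition \ref{defDAFnew}, i.e. showing one can take the generating flow so that things are consistent; in fact Definition \ref{defDAFnew} only asks for \emph{some} flow $X_t$ without singularities and \emph{some} continuous $\tau$, so once $Y_t$ is constructed globally (orientability of $E^c$ being the only subtlety, handled by passing to a double cover or noting it follows from $f$-invariance of the leaves) and $\tau$ is continuous, we are essentially done. The one real point to check is continuity of $\tau$: along a single leaf $\tau$ is determined by $g$, and transverse continuity follows from continuity of $g$, of the center foliation, and a standard local-product-structure argument, together with the fact that $\tau$ never forces $g$ to ``wrap around'' because $|\tau|\le L$ prevents ambiguity.

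Concretely, the key steps in order are: (1) establish that $E^c$ is orientable — or pass to the orientation double cover $\hat M$, lift $g$ to $\hat g$, check the hypothesis lifts, prove the statement upstairs, and descend (the descent works because the deck transformation commutes appropriately with the constructed flow, or one observes that $g$ itself already respects an orientation since it fixes each leaf and is isotopic to the identity along leaves via the bound $L$); (2) build the continuous unit center vector field $e^c$ and its flow $Y_t:M\to M$, which has no singularities since $\|e^c\|\equiv 1$; (3) for each $x$ define $\tau(x)$ as the unique real number with $|\tau(x)|\le L$ and $Y_{\tau(x)}(x)=g(x)$ — unique because within center-distance $L$ the exponential-type map $t\mapsto Y_t(x)$ is injective on a leaf (here one may need leaves to have no closed orbits of length $\le 2L$, which can be arranged by rescaling $e^c$ so that any periodic center leaf has large period, or by a more careful local argument); (4) prove $\tau$ is continuous using the local product structure of $\W^{cs}\cap\W^{cu}$ and continuity of $g$; (5) conclude $g(x)=Y_{\tau(x)}(x)$ with $Y_t$ the required flow.

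The main obstacle I anticipate is step (3)–(4): making $\tau$ \emph{well-defined and continuous} when center leaves can be closed (circles). On a closed center leaf the equation $Y_t(x)=g(x)$ has infinitely many solutions differing by the period, and the bound $g(x)\in\W^c_L(x)$ only pins down $\tau(x)$ up to winding if the leaf is short. The fix is to exploit that $g$, restricted to each center leaf, is a homeomorphism of that leaf that is ``$L$-close to the identity'' and hence isotopic to it; on a circle leaf of length $\ell$ one still gets a well-defined lift once we observe that the $L$-bound plus transitivity-independent compactness forces a uniform lower bound on periods of short closed leaves, or alternatively one defines $\tau$ first on the (dense, by the local structure) set of non-periodic leaves and extends by continuity. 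This is precisely the delicate bookkeeping carried out in \cite{Mar}, and I would follow that argument: the heart of it is that the ``displacement'' is a continuous section of a line bundle over $M$ that, thanks to the uniform bound $L$, does not see the monodromy of closed leaves, so it patches to a global continuous $\tau$.
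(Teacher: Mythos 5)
First, note that the paper does not prove this proposition at all: it is imported verbatim from \cite{Mar}, so there is no internal proof to compare yours against, and a ``proof'' whose delicate steps end with ``this is precisely the bookkeeping carried out in \cite{Mar}, and I would follow that argument'' is not a blind proof of the statement --- it defers to the reference exactly at the point where the content lies. Even the direction \ref{item1prop2}$\Rightarrow$\ref{item2prop2} is not as immediate as you claim: Definition \ref{defDAFnew} does not require the orbits of $X_t$ to be tangent to $E^c$, so the assertion that $g(x)=X_{\tau(x)}(x)$ lies in $\W^c(x)$ presupposes that the orbit foliation of $X_t$ is a center foliation; that fact is itself a nontrivial result of \cite{Mar} (the paper quotes it later as \cite[Proposition 3.1]{Mar}).

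For the direction \ref{item2prop2}$\Rightarrow$\ref{item1prop2}, the two concrete devices you offer at the acknowledged trouble spots do not work as stated. (a) Orientability: passing to the orientation double cover and ``descending'' fails precisely because the deck involution reverses the orientation of the lifted center bundle by construction, hence conjugates any lifted unit-speed center flow to its time reversal; such a flow does not descend, and the claim that ``$g$ itself already respects an orientation'' is exactly what has to be proved (it is part of the content of the proposition, not an observation). (b) Well-definedness of $\tau$ on circle leaves: rescaling $e^c$ to make periods of short circle leaves large changes the traversal time from $x$ to $g(x)$ by the same factor as the period, so the monodromy ambiguity (displacement versus period) is unchanged; moreover, on a circle leaf of length $\le L$ the hypothesis $g(x)\in\W^c_L(x)$ carries no information whatsoever, so $\tau$ there must be produced by a limiting argument from nearby leaves, and ``extend by continuity'' assumes precisely the existence and uniqueness of that limit --- which, together with the constant-sign/no-wrapping statement, is the heart of the proof in \cite{Mar}. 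As it stands, steps (1), (3) and (4) of your outline are therefore genuine gaps rather than routine verifications.
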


The goal will be to show that such a constant $L>0$ exists in our setting. We will begin with a few preliminary remarks to establish the working framework.

There are two types of leaves in $\W^c$. The compact ones, that we will call \emph{circles}, and the non compact ones, that we will call \emph{lines}. As shown in the next lemma, most of the leaves of $\W^c$ are lines.

\begin{lemma}\label{lemmafinitelymanyleavesoflength<R} For every $L>0$ the number of circle leaves of $\W^c$ whose length is less than $L$ is finite. In particular, the set $\{W\in \W^c:W\text{ is compact}\}$ has at most countably many elements.
\end{lemma}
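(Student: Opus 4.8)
The plan is to argue by contradiction. Suppose there is an infinite family $W_1,W_2,\dots$ of pairwise distinct circle leaves of $\W^c$ with $\length(W_n)<L$ for every $n$. The first ingredient is a uniform local product structure for $\W^c$: since $E^c$ is a continuous line field on the compact manifold $M$ and the leaves of $\W^c$ are $C^1$ and tangent to it, there are uniform constants $0<a\le b$ such that around every $y\in M$ there is a foliation box $\psi_y\colon D\times I\to M$ for $\W^c$ (with $0$ an interior point of the interval $I$), where $D$ is a $C^1$ disk, the plaques $\psi_y(\{v\}\times I)$ are arcs of leaves of $\W^c$ of length in $[a,b]$, and $\Sigma_y:=\psi_y(D\times\{0\})$ is a $C^1$ transversal with $y\in\Sigma_y$. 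We may also assume each box has small diameter.

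Now pick $x_n\in W_n$ and, by compactness of $M$, pass to a subsequence with $x_n\to x$; fix the box $\psi:=\psi_x$ and the transversal $\Sigma:=\Sigma_x$. For $n$ large $x_n$ lies in the image of $\psi$; since $W_n$ is a closed curve not contained in this box, each connected component of $W_n$ intersected with the box is a full plaque, of length at least $a$, and since $\length(W_n)<L$ there are at most $N:=\lfloor L/a\rfloor$ of them, so $\#(W_n\cap\Sigma)\le N$. Moreover the plaque through $x_n$ meets $\Sigma$ at a point $y_n$ with $y_n\to x$. Covering each $W_n$ by at most $N+1$ of the uniform boxes and using that the leaves are uniformly $C^1$, an Arzel\`a--Ascoli and diagonal argument lets me pass to a further subsequence along which $W_n$ converges, in the Hausdorff metric, to a compact connected set $K$ with $x\in K$, of diameter at most $L$, saturated by $\W^c$; in particular $\overline{\W^c(x)}\subseteq K$, and since the $W_n$ are pairwise distinct, at most one of them coincides with $\W^c(x)$.

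It remains to derive a contradiction from the fact that $\W^c(x)$ is accumulated by the distinct circle leaves $W_n$ of bounded length; this is the step I expect to be the main obstacle. Going once around each $W_n$, starting from its point on $\Sigma$ closest to $x$, and using the uniform boxes, produces holonomy maps that are compositions of at most $N+1$ elementary holonomies between consecutive boxes; when $\W^c(x)$ is compact, each such point of $W_n\cap\Sigma$ is then a periodic point, of period at most $N$, of the holonomy germ of $\W^c(x)$ at $y_\infty:=\Sigma\cap\W^c(x)$, and these accumulate on $y_\infty$. I do not expect this configuration to be excludable by foliation theory alone --- a linear foliation of $T^2$ by parallel circles has infinitely many short compact leaves --- so the hypotheses that $f$ is partially hyperbolic and $f(W)=W$ for every center leaf must enter here: the natural route is that $f$ preserves each $W_n$ and the limit set $K$, and one uses the contraction and expansion along $\W^s\subseteq\W^{cs}$ and $\W^u\subseteq\W^{cu}$ together with the domination $\|Df|_{E^s}\|<\|Df|_{E^c}\|<\|Df|_{E^u}\|$ to rule out such an accumulation of arbitrarily close $f$-invariant center circles of bounded length. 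Once the finiteness is established for each fixed $L$, applying it to $L=1,2,3,\dots$ exhibits the set of compact center leaves as a countable union of finite sets, hence as a countable set.
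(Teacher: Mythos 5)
Your limiting setup (distinct circles $W_n$ of length $<L$, $x_n\to x$, uniform foliation boxes, Hausdorff convergence) parallels the paper's first step, but the proof stops exactly where the real argument must begin: you explicitly leave open the step of excluding an accumulation of distinct $f$-invariant circle leaves of bounded length near the limit leaf, and only gesture that partial hyperbolicity together with $f(W)=W$ ``should'' rule it out. That is a genuine gap, not a technicality: as you yourself observe, everything you actually prove holds for foliations with no dynamics at all (parallel circles in $T^2$), so the entire content of the lemma lives in the missing step. The holonomy/periodic-germ discussion you sketch does not by itself produce a contradiction, and you also hedge on whether the limit leaf $\W^c(x)$ is compact, which is needed and does follow from $\length(W_n)\to L_1$ (a limit of arc-length parametrized closed center curves of bounded length is a closed center curve through $x$).

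The paper closes the gap with a short local-maximality (isolation) argument for the compact, $f$-invariant, normally hyperbolic leaf $\W^c(x)$. Choose $\epsilon>0$ so small that the disks $\W^s_\epsilon(y)$, $y\in\W^c(x)$, are pairwise disjoint, and the disks $\W^u_\epsilon(z)$, $z\in\W^s_\epsilon(\W^c(x))$, are pairwise disjoint; with $\kappa>1$ a bound for $\|Df^{\pm1}\|$, set $U=\W^u_{\kappa^{-1}\epsilon}\big(\W^s_{\kappa^{-1}\epsilon}(\W^c(x))\big)$. Then $f^{\pm1}(U)\subset\W^u_\epsilon(\W^s_\epsilon(\W^c(x)))$, and since forward iterates contract stable disks and backward iterates contract unstable disks, $\bigcap_{k\in\mathbb{Z}}f^k(U)=\W^c(x)$. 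For $n$ large one has $W_n\subset U$ (this uses both $x_n\to x$ and the convergence of the lengths, so the whole circle and not just the point $x_n$ enters $U$), and the center-fixing hypothesis $f(W_n)=W_n$ then forces $W_n\subset\bigcap_{k\in\mathbb{Z}}f^k(U)=\W^c(x)$, i.e.\ $W_n=\W^c(x)$, contradicting distinctness. Note that once this isolation argument is in place, your uniform-box, transversal-counting and Arzel\`a--Ascoli machinery is unnecessary; to complete your proposal you would need to supply precisely this argument (or an equivalent use of normal hyperbolicity of the compact limit leaf).
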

\begin{proof}
The proof of this lemma is immediate by transverse hyperbolicity in a neighborhood of each compact center leaf of $\W^c$ (every such leaf is an $f$-invariant compact submanifold that is normally hyperbolic). 

Indeed, suppose by contradiction that for some $L_0>0$ there exists an infinite number  $\{\W^c(x_n)\}_{n\geq 0}$ of distinct circle leaves of $\W^c$ whose length is less than $L_0$.

Modulo subsequence one can suppose that the sequence $(x_n)_n$ has limit $x\in M$ and that the sequence of lengths $\big(\length \W^c(x_n)\big)_n$ converges to some constant $L_1>0$. It follows that $\W^c(x)$ is compact and that $\length(\W^c(x))\leq L_1$. 

Let $\epsilon>0$ be a small constant so that $\W^s_\epsilon(y)$ and  $\W^s_\epsilon(z)$ do not intersect for every pair $y,z\in \W^c(x)$ such that $y\neq z$. And small enough so that $\W^u_\epsilon(y)$ and $\W^u_\epsilon(z)$ do not intersect for every $y,z\in \W^s_\epsilon(\W^c(x))$ such that $y\neq z$. Let $\kappa> 1$ be such that $\max\{||Df_x||,||Df^{-1}_x||\}<\kappa$ for every $x\in M$. Let $U:=\W^u_{\kappa^{-1}\epsilon}(\W^s_{\kappa^{-1}\epsilon}(\W^c(x)))$. 

Note that $f(U)\subset \W^u_\epsilon(\W^s_\epsilon(\W^c(x)))$ and that $f^{-1}(U)\subset \W^u_\epsilon(\W^s_\epsilon(\W^c(x)))$. Since $f$ contracts indefinitely stable and unstable discs for forwards and backwards iterates, respectively, it follows that $\W^c(x)=\bigcap_{k\in \mathbb{Z}}f^k(U)$. 

For every $n$ large enough the leaf $\W^c(x_n)$ is contained in $U$ since $\lim_n x_n=x$ and $\lim_n\length \W^c(x_n)=L_1$. It follows from $f(\W^c(x_n))=\W^c(x_n)$ that $\W^c(x_n)\subset \bigcap_{k\in \mathbb{Z}}f^k(U)$. That is, $\W^c(x_n)=\W^c(x)$ for every $n$ large enough. This gives us a contradiction.
\end{proof}

The next two lemmas will allow us to reduce the problem to the case where $\W^c$ is orientable and $f$ preserves the orientation of its leaves. 

\begin{lemma}\label{lemmafnDAFthenfDAF} To show that $f$ is a discretized Anosov flow is enough to show that $f^n$ is a discretized Anosov for some $n>1$.
\end{lemma}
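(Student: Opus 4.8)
The plan is to show that the property of being a discretized Anosov flow is "root-detecting": if $f^n$ is a discretized Anosov flow, then so is $f$. I would exploit Proposition \ref{propcenterfixingL}, which characterizes discretized Anosov flows purely in terms of a uniform bound on the center displacement. So it suffices to produce a center foliation $\W^c$ for $f$ and a constant $L>0$ with $f(x)\in \W^c_L(x)$ for every $x\in M$, knowing that such data exists for $f^n$.

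The first step is to note that $f$ and $f^n$ share the same partially hyperbolic splitting $TM=E^s\oplus E^c\oplus E^u$ (the invariant bundles of a partially hyperbolic diffeomorphism and of its iterates coincide), and that $f$ is dynamically coherent precisely when $f^n$ is: the center-stable and center-unstable foliations of $f$ are $f^n$-invariant, and conversely the $f^n$-invariant branching foliations tangent to $E^s\oplus E^c$ and $E^c\oplus E^u$ are unique, hence $f$-invariant. In particular $f$ and $f^n$ have the same center foliation $\W^c$, which is $f^n$-fixed leafwise by hypothesis. Then I would observe that $f$ also fixes each leaf of $\W^c$: given $W\in\W^c$, the leaf $f(W)$ is a center leaf with $f^n(f(W))=f(f^n(W))=f(W)$, and $f(W)\cap W\supset\{f(f^{n-1}(x))\}$-type considerations — more simply, $f^n$ permutes the (at most countably many, by Lemma \ref{lemmafinitelymanyleavesoflength<R}) leaves meeting a small transversal while fixing each of them, and $f$ commutes with $f^n$, so $f$ must also fix $W$. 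Actually the cleanest route is: since $f^n$ is a discretized Anosov flow, by the already-cited result from \cite{Mar} its center leaves are the orbits of a continuous non-singular flow $X_t$, and $f=f\circ(f^n)\circ(f^n)^{-1}$ commutes with $f^n=X_{\tau}$; one then checks directly that $f$ preserves each orbit of $X_t$, and it remains only to bound the displacement.

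For the displacement bound, write $f^n=X_{\tau}$ with $\tau:M\to\R$ continuous and $|\tau|\le T$ for some $T>0$ (continuity plus compactness of $M$). Along a fixed center leaf $W=\W^c(x)$, parametrize by arc length and let $g:W\to W$ be the restriction of $f$; then $g^n$ is the restriction of $X_{\tau}$, which moves every point a bounded arc-length amount, say at most $L_0$, uniformly in $x$ (using that $X_t$ has bounded speed and $|\tau|\le T$). The key point is that $g$ is a homeomorphism of the line or circle $W$ commuting with $g^n$, and $g^n$ has all displacements bounded by $L_0$; I claim $g$ itself then has displacements bounded by, say, $L_0$ as well (or at worst by $L_0$ plus a controlled constant). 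On a line, if $g$ moved some point a distance $>L_0$, then since $g$ is monotone and commutes with the translation-like $g^n$, iterating would force $g^n=g\circ\cdots\circ g$ to move a point more than $L_0$, a contradiction; the circle case is handled using rotation numbers, noting a discretized-Anosov center circle is $f^n$-fixed with $f^n$ having a fixed point on it (the flow has period issues but the displacement is literally bounded by $L_0$ on each leaf). Uniformity in $x$ follows because $L_0$ was chosen uniform. This yields a constant $L>0$ with $f(x)\in\W^c_L(x)$ for all $x$, and Proposition \ref{propcenterfixingL} concludes.

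The main obstacle I anticipate is the last step: passing from a uniform displacement bound for $g^n$ on each center leaf to a uniform displacement bound for $g$ itself. For the line leaves this is an elementary fact about commuting monotone homeomorphisms of $\R$, but care is needed to get the bound uniform across all leaves simultaneously (rather than leaf-by-leaf with no control), and the compact (circle) leaves require a separate, slightly delicate argument — one must rule out that $f$ "rotates" a center circle while $f^n$ returns it to itself, which uses that on such a circle $f^n$ is conjugate to a rotation by a rational (in fact the circle is normally hyperbolic and $f^n$-fixed, so $f^n$ restricted to it is the identity only up to the flow, forcing the rotation number of $g$ to be rational with denominator dividing $n$, whence $g$'s displacement is controlled). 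Organizing these cases cleanly, while keeping all constants uniform, is where the real work lies.
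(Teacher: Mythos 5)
Your overall strategy is the paper's: reduce, via Proposition \ref{propcenterfixingL}, to producing a uniform $L$ with $f(x)\in\W^c_L(x)$, starting from the uniform bound that $f^n=X_\tau$ provides. But the two steps where the actual content lies are not carried out correctly. First, your line-leaf claim --- ``a homeomorphism $g$ of the line whose $n$-th power has displacement $\leq L_0$ has bounded displacement'' --- is false as stated: $g(x)=-x$ has $g^2=\mathrm{id}$ but unbounded displacement. Your sketch tacitly assumes $g=f|_W$ is increasing, and at this point of the paper no orientation assumption is available (this lemma is exactly what is used afterwards, in Lemma \ref{lemmaWcorientableandfpreservesorient}, to reduce to the orientation-preserving case), so you must rule out that $f$ reverses the orientation of a line leaf. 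The paper does this with the one observation your proposal is missing: by \cite[Proposition 3.1]{Mar} the function $\tau$ has constant sign, so $f^n$ has no fixed point on a non-compact leaf, hence $f$ has none either, hence $f$ preserves the orientation of that leaf; monotonicity of the orbit then gives $f(x)\in[x,f^n(x)]_c\subset\W^c_L(x)$ at once, with $L$ the constant from Proposition \ref{propcenterfixingL} for $f^n$.

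Second, your treatment of circle leaves does not work: a rational rotation number does not control arc-length displacement (a ``rotation by $1/2$'' of a very long circle leaf has huge displacement while its $n$-th power can be close to the identity), and since circle leaves may have unbounded lengths, leaf-by-leaf estimates cannot produce the single constant $L$ that Proposition \ref{propcenterfixingL} requires --- you yourself flag this as unresolved. The paper avoids the issue entirely: by Lemma \ref{lemmafinitelymanyleavesoflength<R} there are at most countably many compact leaves, so any $x$ on a circle leaf is a limit of points $x_k$ on line leaves; from $f(x_k)\in\W^c_L(x_k)$ and $f(x_k)\to f(x)$ one gets $f(x)\in\W^c_L(x)$ with the same $L$. (A smaller point: fixing of center leaves by $f$ is the standing hypothesis of this section, so it need not be derived; and your derivation via ``$f$ commutes with $f^n$, which fixes every leaf'' is not valid in any case.) So the skeleton matches the paper, but the orientation step on line leaves and the compact-leaf step are genuine gaps, and they are precisely where the paper's proof does its work.
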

\begin{proof} Suppose that $f^n$ is a discretized Anosov flow for some $n>1$ as in Definition \ref{defDAFnew}. It follows from \cite[Proposition 3.1]{Mar} that the orbits of the flow $X_t$ defining $f^n$ form a center foliation $\W^c$ tangent to $E^c$ and that the function $\tau$ has constant sign. 

Note first that, as $\tau$ has constant sign, if $W$ is a leaf of $\W^c$ that is not compact then $f^n$ has no fixed points in $W$. As a consequence, $f$ has no fixed points in $W$ either. In particular, $f$ preserves the orientation of $W$.

By Proposition \ref{propcenterfixingL} there exists $L>0$ such that $f^n(x)\in \W^c_L(x)$ for every $x\in M$. In case $\W^c(x)$ is not compact, the fact that $f$ preserves the orientation of $\W^c(x)$ implies that $f(x)$ lies in the center interval $[x,f^n(x)]_c$ joining $x$ with $f^n(x)$. In particular, $f(x)$ lies in $\W^c_L(x)$.

By Lemma \ref{lemmafinitelymanyleavesoflength<R} there exists at most countably many compact leaves of $\W^c$. So given $x$ in a compact leaf of $\W^c$ one can consider a sequence $x_n$ converging to $x$ so that $\W^c(x_n)$ is not compact for every $n$. As $f(x_n)$ belongs to $\W^c_L(x_n)$ for every $n$ and the sequence $f(x_n)$ tends to $f(x)$ one obtains that $f(x)$ lies in $\W^c_L(x)$. 

We have shown that $f(x)\in \W^c_L(x)$ for every $x\in M$. By Proposition \ref{propcenterfixingL} we conclude that $f$ is a discretized Anosov flow.
\end{proof}

As mentioned before, the above lemma has the following immediate consequence:

\begin{lemma}\label{lemmaWcorientableandfpreservesorient} To show that $f$ is a discretized Anosov flow we can assume that the foliation $\W^c$ is orientable and that $f$ preserves the orientation of $\W^c$-leaves.
\end{lemma}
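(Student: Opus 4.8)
The plan is to reduce to the stated situation by passing to a finite cover on which $\W^c$ becomes orientable, and then applying Lemma~\ref{lemmafnDAFthenfDAF} to a square of the lifted map so that leaf orientations are preserved. Concretely, I would first take the orientation double cover $\pi\colon \hat M\to M$ of the one-dimensional foliation $\W^c$, so that the pulled-back foliation $\hat\W^c:=\pi^{*}\W^c$ is orientable (if $\W^c$ is already orientable this cover is trivial and $\hat M=M$). Since $f$ preserves $\W^c$, it preserves the orientation monodromy of $\W^c$, hence $f_{*}$ fixes the index-two subgroup of $\pi_1(M)$ defining this cover, and $f$ lifts to a diffeomorphism $\hat f\colon \hat M\to \hat M$ with $\pi\hat f=f\pi$; as usual, $\hat f$ commutes with the nontrivial deck transformation $\sigma$, which has order two. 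Because partial hyperbolicity, dynamical coherence and $\dim E^c=1$ are local conditions, $\hat f\in\PH_{c=1}(\hat M)$ is dynamically coherent with invariant foliations $\pi^{*}\W^{cs}$, $\pi^{*}\W^{cu}$ and center foliation $\hat\W^c$, now orientable.

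Next I would put $\hat g:=\hat f^{\,2}$. This map preserves the global orientation of the leaves of $\hat\W^c$, and it is center fixing: if $\hat W$ is a leaf of $\hat\W^c$ above a leaf $W\in\W^c$, then $\hat f(\hat W)$ lies above $f(W)=W$, so $\hat f(\hat W)\in\{\hat W,\sigma(\hat W)\}$, and in the second case $\hat g(\hat W)=\hat f(\sigma\hat W)=\sigma\hat f(\hat W)=\sigma^{2}(\hat W)=\hat W$; hence $\hat g(\hat W)=\hat W$ in all cases. Moreover $\hat\W^c$ has a dense leaf: the preimage of a dense leaf $W\in\W^c$ is a dense, $\sigma$-invariant set made of one or two leaves of $\hat\W^c$, so its closure is all of $\hat M$, and since $\pi$ is closed and the closure of a leaf is a union of leaves, one of these leaves is already dense. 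Thus $\hat g$ satisfies all the hypotheses of Theorem~\ref{thm1} together with the two extra assumptions in the present lemma.

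Granting that Theorem~\ref{thm1} has been established under those extra assumptions, $\hat g$ is a discretized Anosov flow, so by Proposition~\ref{propcenterfixingL} there is $L>0$ with $\hat g(\hat x)\in\hat\W^c_{L}(\hat x)$ for every $\hat x\in\hat M$. Pushing this bound down through the local isometry $\pi$ gives $f^{\,2}(x)\in\W^c_{L}(x)$ for every $x\in M$, so $f^{\,2}$ is a discretized Anosov flow by Proposition~\ref{propcenterfixingL}, and then $f$ is one by Lemma~\ref{lemmafnDAFthenfDAF}. The only genuinely delicate points I anticipate are the bookkeeping in this last step --- knowing that the center foliation produced by Proposition~\ref{propcenterfixingL} for $\hat g$ can be taken to be $\hat\W^c$, so that the displacement bound really projects, which uses that $\hat g$ is center fixing with respect to $\hat\W^c$ --- and checking that the dense-leaf hypothesis passes to the cover; the latter is in any case not essential, since transitivity of $\W^c$ is only invoked once later on (cf. Remark~\ref{rmkhypWctransitive}).
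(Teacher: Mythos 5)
Your reduction is essentially the paper's own: pass to the orientation double cover, take a power of a lift of $f$ to recover center fixing and orientation preservation, transfer the displacement bound from Proposition \ref{propcenterfixingL} down through the covering, and finish with Lemma \ref{lemmafnDAFthenfDAF}. Two comments on where you deviate. First, you use $\hat f^{2}$ where the paper uses $\bar f^{4}$ (the square of the leaf-fixing map $\bar f^{2}$); your sharper choice is legitimate, but the assertion that $\hat f^{2}$ preserves the orientation of $\hat{\W}^c$ is stated without proof. It does hold: the natural lift $(x,o)\mapsto (f(x),Df_x o)$ preserves the tautological orientation and has the same square as the other lift $\sigma\circ\hat f$ (equivalently, the sign of $D\hat f$ on the oriented line bundle is locally constant, hence constant on the connected cover, so its square is orientation preserving); add a line to this effect, or simply take the fourth power as the paper does. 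Second, the genuinely soft spot is the dense-leaf transfer. When $\pi^{-1}(W)=\hat W_1\cup\hat W_2$, knowing that $\hat M=\overline{\hat W_1}\cup\overline{\hat W_2}$ does not by itself force one of the two closures to be all of $\hat M$ (a connected space can be a union of two proper closed saturated sets), so the sentence ``one of these leaves is already dense'' is not justified as written; and your fallback that density ``is in any case not essential'' is at odds with Remark \ref{rmkhypWctransitive}, where the author explains that he does not know how to conclude without it. To be fair, the paper's proof of this lemma is completely silent on whether the dense-leaf hypothesis survives the passage to the cover, so on this point you are more careful than the source even if your argument is incomplete; everything else in your write-up (existence of the lift, commutation with the deck transformation, center fixing of the square, projecting the bound, the caveat about which center foliation Proposition \ref{propcenterfixingL} refers to) is correct and matches the paper's route.
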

\begin{proof} Consider a double cover $\bar{M}$ of $M$ so that $\W^c$ lifts to an orientable foliation $\bar{\W}^c$. Let $\bar{f}:\bar{M}\to \bar{M}$ be a lift of $f$. A priori $\bar{f}$ may not individually fix every leaf of $\tilde{\W}^c$. However, $\bar{f}^2$  does. Moreover, $\bar{f}^4$ preserves the orientations of these leaves. 

Let $g:=\bar{f}^4$ and suppose that $g$ is a discretized Anosov flow. It follows from Proposition \ref{propcenterfixingL} that there exists $L>0$ so that $g(x)$ lies in $\bar{\W}^c_L(\tilde{x})$ for every $\bar{x}\in \bar{M}$. Then $f^4(x)$ lies in $\W^c_L(x)$ for every $x\in M$. It follows that $f^4$ is a discretized Anosov flow. By Lemma \ref{lemmafnDAFthenfDAF}, $f$ itself is a discretized Anosov flow. This concludes the proof.
\end{proof}

In line with the previous lemma, we will assume from now on that the foliation $\W^c$ is orientable and that $f$ preserves the orientation of $\W^c$ leaves. 

In particular, this allows us to consider a flow along $\W^c$ leaves:

\begin{definition}[Center flow] Let $X^c_t:M\to M$ be a flow by arc-length whose orbits  are the leaves of $\W^c$.
\end{definition}

We will also consider the following function:

\begin{definition}[Center displacement function]
Let us define $\rho:M\to \mathbb{R}$ to be $$\rho(x):=d_c(x,f(x))$$ for every $x\in M$.
\end{definition}

\begin{remark}\label{rmkDAFiftaubounded}
It is immediate from Proposition \ref{propcenterfixingL} that if $\rho$ is bounded then $f$ is a discretized Anosov flow. This will be shown in Corollary \ref{corfinal} and will be the goal for the rest of the text.
\end{remark}

Recall that a real valued function $F$ in $M$ is called \emph{lower semicontinuous} if for every sequence $(x_n)_n$ converging to $x$ one has that $\liminf_n F(x_n)\geq F(x)$.

\begin{prop}\label{proprhosemicont}
The function $\rho$ is lower semicontinuous.
\end{prop}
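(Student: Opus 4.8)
We want to show $\rho$ is lower semicontinuous: if $x_n \to x$, then $\liminf_n \rho(x_n) \geq \rho(x)$. Recall $\rho(y) = d_c(y, f(y))$ is the intrinsic distance along the center leaf $\W^c(y)$ between $y$ and $f(y)$. The subtlety here, and the reason this requires proof, is that the center foliation is only continuous (the leaves are $C^1$ but vary only continuously), so the intrinsic center distance need not be continuous — leaves can "fold back" near $x$, making points that are close in $M$ but far apart along their center leaves. Lower semicontinuity says the center distance cannot suddenly *drop* in the limit.

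**Plan of proof.**

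Suppose for contradiction that $\liminf_n \rho(x_n) = \ell < \rho(x)$; passing to a subsequence, assume $\rho(x_n) \to \ell$. Pick $r$ with $\ell < r < \rho(x)$; then for all large $n$, $\rho(x_n) < r$, so $f(x_n) \in \W^c_r(x_n)$. Using the center flow $X^c_t$ (by arc length along $\W^c$-leaves): $\rho(x_n) < r$ means $f(x_n) = X^c_{t_n}(x_n)$ with $|t_n| \leq r$ (taking $t_n$ with minimal absolute value if there is ambiguity from compact leaves, though since $f$ preserves orientation one can take $t_n \geq 0$). By compactness of $[-r,r]$, pass to a further subsequence so $t_n \to t_\infty$ with $|t_\infty| \leq r$. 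Now the flow $X^c_t$ is continuous jointly in $(t,y)$ — this is where we use that $X^c$ is a genuine continuous flow generated by a continuous vector field — so $X^c_{t_n}(x_n) \to X^c_{t_\infty}(x)$. Since $f$ is continuous, $f(x_n) \to f(x)$, hence $f(x) = X^c_{t_\infty}(x)$. But then $f(x)$ is reached from $x$ by flowing time $t_\infty$ along the center leaf, so $d_c(x,f(x)) \leq |t_\infty| \leq r < \rho(x)$, a contradiction.

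**The main point to nail down.**

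The crux is the joint continuity of the center flow $X^c_t$ and the relation between $\rho(y) < r$ and "$f(y) = X^c_{t}(y)$ for some $t$ with $|t| \leq r$" — i.e., that $d_c(y, f(y))$ is exactly the minimal $|t|$ (respectively the unique $t \geq 0$, by orientation-preservation, when $\W^c(y)$ is a line) with $X^c_t(y) = f(y)$. For line leaves this is clean since $t \mapsto X^c_t(y)$ is injective and parametrizes the leaf by arc length, so $d_c(y, X^c_t(y)) = |t|$. For circle leaves one must be slightly careful: $d_c(y,f(y))$ is the arc-length distance on the circle, which is $\min\{t_0, L - t_0\}$ where $L$ is the leaf length and $t_0 \in [0,L)$ is the flow time; but since only countably many leaves are circles (Lemma \ref{lemmafinitelymanyleavesoflength<R}) and they behave no worse, the argument goes through — indeed the inequality $d_c(x,f(x)) \leq |t_\infty|$ only needs that $f(x)$ *is* $X^c_{t_\infty}(x)$, regardless of whether $t_\infty$ realizes the distance. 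So the only genuine input is continuity of $(t,y)\mapsto X^c_t(y)$, which is built into the definition of the center flow, plus continuity of $f$. I expect no serious obstacle; the proof is a compactness-plus-continuity argument once the flow reformulation is in place.
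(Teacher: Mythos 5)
Your proof is correct and is essentially the paper's argument in a different packaging: both extract, by compactness, a limiting center path from $x$ to $f(x)$ of length at most $\liminf_n\rho(x_n)$, the paper via $C^1$-convergence of the minimizing center curves $\gamma_n$ and you via convergence of the flow times $t_n$ together with joint continuity of $X^c_t$ (the same underlying continuity of the center foliation). The only small caveat is that the continuity of $X^c_t$ comes from the foliation structure (continuously varying $C^1$ plaques parametrized by arc length) rather than from its generating vector field alone, but this is exactly the fact the paper also uses implicitly, so your argument stands.
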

\begin{proof}
Suppose $(x_n)_n$ is a sequence converging to $x$. In case $\liminf_n \rho(x_n)=+\infty$ then there is nothing to proof. Otherwise, one can consider a subsequence so that the limit inferior is in fact a limit. By a slight abuse of notation let us denote this subsequence also $(x_n)_n$.

For every $n$ let $\gamma_n:[0,1]\to \W^c(x_n)$ denote a $C^1$ curve parametrized by arc-length so that $\length(\gamma_n)=\rho(x_n)$. The sequence $\dot{\gamma_n}(0)$ accumulates in a unitary vector $v_c$ in $E^c(x)$. Up to taking a subsequence (in case $-v_c$ is also an accumulation point), let us suppose that $\dot{\gamma_n}(0)$ converges to $v_c$. 

Let $\gamma:[0,1]\to \W^c(x)$ denote the $C^1$ curve parametrized by arc-length so that $\dot{\gamma}(0)=v_c$. It follows that $\gamma_n$ converges with $n$ to $\gamma$ in the $C^1$ topology. In particular, $\length(\gamma_n)$ converges to $\length(\gamma)$.

Since $\gamma_n(1)$ converges with $n$ to $\gamma(1)$ and $\gamma_n(1)=f(x_n)$ for every $n$ it follows that $\gamma(1)=f(x)$. As $\gamma$ is a $C^1$ curve in $\W^c(x)$ joining $x$ to $f(x)$ it follows that $\rho(x)\leq \length(\gamma)$. Since $\length(\gamma_n)=\rho(x_n)$ converges with $n$ to $\length(\gamma)$ we obtain that $\rho(x)\leq \lim_n \rho(x_n)$ as desired. 
\end{proof}

\begin{remark}
It is worth noting that we can not expect to show that, in general, $\rho$ has to be continuous at every point of $M$. For example, if $f$ is the time 1 map of an Anosov flow $Y_t:M\to M$, and $Y_t$ is parametrized by arc-length, then $\rho$ will only be continuous in the complement of the set of periodic orbits of period smaller than $2$.
\end{remark}

\begin{definition} Let us denote by $X\subset M$ the set of continuity points of $\rho$ in $M$. Namely $$X=\{x\in M \mid \rho \text{ is continuous at }x\}.$$

Let us denote by $Y\subset X$ the set of continuity points $x$ such that $\W^c(x)$ is a line. Namely $$Y=\{x\in X \mid \W^c(x) \text{ is a line}\}.$$
\end{definition}

\begin{remark} By the semicontinuity of $\rho$ it follows by classical arguments that $X$ is a \emph{residual subset of $M$} (meaning that it is a countable intersection of open and dense subsets of $M$).

Moreover, it is not difficult to show that in our context $X$ must in fact contain an open a dense subset of $M$. We do not prove this statement since we will not need it later.
\end{remark}

Recall that a topological space is \emph{locally path connected} if every point has a local basis made of path connected open sets.

\begin{prop}\label{propYresidualWcsatpathconn}
The set $Y$ is residual in $M$, saturated by leaves of $\W^c$ and locally path connected. %Moreover, there exists $V$ open and dense in $M$ so that $Y\subset V \subset X$.\marg{Se precisa el $V$?}
\end{prop}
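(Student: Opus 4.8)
The plan is to establish the three properties of $Y$ in turn, using the semicontinuity of $\rho$ (Proposition \ref{proprhosemicont}) and the fact that line leaves form a "large" set.

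First, for residuality: we already know $X$ is residual. To see $Y$ is residual it suffices to show that $X\setminus Y$, the set of continuity points lying on a circle leaf, is contained in a meagre set. By Lemma \ref{lemmafinitelymanyleavesoflength<R} there are at most countably many circle leaves, and each circle leaf is a one-dimensional submanifold, hence has empty interior in $M$ (since $\dim M \geq 2$; indeed $\dim E^s, \dim E^u \geq 1$ because of the inequality $\|Df^\ell v^s\| < \|Df^\ell v^c\| < \|Df^\ell v^u\|$ forcing nontrivial $E^s$ and $E^u$). Thus the union of all circle leaves is a countable union of nowhere dense sets, i.e. meagre. Since $X$ is residual and $X\setminus Y$ is contained in this meagre set, $Y = X \cap (\text{complement of the circles})$ is the intersection of a residual set with a residual set, hence residual.

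Next, $\W^c$-saturation: the key point is that $\rho$ restricted to a single center leaf behaves well, and that being a line is a leaf-invariant property. If $x \in Y$ and $y \in \W^c(x)$, then $\W^c(y) = \W^c(x)$ is a line, so we only need continuity of $\rho$ at $y$. Here I would use that along the leaf $\W^c(x)$, which is a line, one has an arc-length parametrization $t \mapsto X^c_t(x)$, and $f$ acts on this line as an orientation-preserving homeomorphism fixing the leaf; the displacement $\rho(X^c_t(x))$ is the (intrinsic) distance from $X^c_t(x)$ to $f(X^c_t(x))$, which varies continuously in $t$ since $f$ is continuous and the leaf is a line (no wrap-around). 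More precisely, given a sequence $y_n \to y$ in $M$ with each $y_n$ arbitrary, one compares $\rho(y_n)$ to $\rho$ evaluated at nearby points of $\W^c(x)$; combining lower semicontinuity with an upper bound obtained by pushing the continuous curve realizing $\rho(x)$ along the (continuous) center flow and using continuity of $f$, one gets $\limsup_n \rho(y_n) \leq \rho(y)$ when $y_n$ stays controlled. Actually the cleanest route: continuity of $\rho$ at $x$ plus the fact that $t\mapsto \rho(X^c_t(x))$ is continuous (leaf is a line, so local + global distance agree and $f|_{\W^c(x)}$ is a homeomorphism of $\R$) together with semicontinuity at every point gives continuity at each $X^c_t(x)$; I would spell this out by showing $\rho$ is continuous at $y$ whenever it is continuous at some point of the same line leaf, using that an upper semicontinuity estimate transfers along the leaf because nearby leaves are lines of large length near $y$ (again by Lemma \ref{lemmafinitelymanyleavesoflength<R}, the leaves accumulating on $\W^c(x)$ have length going to infinity, so no short-circle obstruction).

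Finally, local path connectedness: since $Y$ is $\W^c$-saturated and residual, around a point $x \in Y$ I would take a small foliation chart (a flow box for $\W^c$), inside which $\W^c$ looks like a trivial product $D \times (-1,1)$ with plaques $\{d\}\times(-1,1)$. The set $Y$ meets this box in a union of full plaques (by saturation), namely $(Y \cap D) \times (-1,1)$. Such a set is path connected provided $Y \cap D$ is, which need not hold, so instead I would argue directly: given $p, q \in Y$ in the box, first move $p$ and $q$ along their plaques (staying in $Y$) to a transversal slice, and then — this is the main obstacle — connect two points of $Y$ within a transversal by a path inside $Y$. For this one cannot use $Y \cap D$ being connected; rather, the point is that since each plaque lies entirely in $Y$ or is disjoint from it, and $Y$ is residual hence $Y \cap D$ is residual in $D$ (a disc, which is locally connected), one shows $(Y\cap D)\times(-1,1)$ is path connected by a different mechanism. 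The honest approach: shrink the box so that it is contained in a single leaf's tubular neighborhood is impossible for lines; instead I expect the argument to be that $Y$ is open in $\W^c(X)$ relative to a suitable topology, or simply that one exhibits a local basis of path-connected neighborhoods by taking, for $x \in Y$, the $\W^c$-saturation inside a small ball of the connected open set $Y \cap (\text{small ball})$ — using that $Y$ being residual in the locally connected space $M$ makes its connected components open, and saturating preserves path-connectedness since each plaque is an interval. The hard part will be handling the transversal direction cleanly; I would isolate it as: any residual $\W^c$-saturated set in a flow box, intersected with a small enough sub-box, is path connected, which follows because the quotient by $\W^c$ of a flow box is a disc (locally path connected) and the saturated set descends to a residual — hence, after shrinking, connected — subset there, with plaques contributing the path-lifting.
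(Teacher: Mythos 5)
Your residuality argument is correct and coincides with the paper's. The two real problems are in the saturation and local path connectedness steps. For saturation, you conflate continuity of $t\mapsto\rho(X^c_t(x))$ along the leaf with continuity of $\rho$ at the points of the leaf as a function on $M$. Since $\rho$ is lower semicontinuous, the only failure mode is transverse: a sequence $z_n\to z$ with $z\in\W^c(x)$ such that $f(z_n)$ is ambient-close to $f(z)$ but at enormous intrinsic distance from $z_n$ inside $\W^c(z_n)$. Nothing in ``pushing the curve realizing $\rho(x)$ along the center flow'' addresses this, because the center flow only moves points within their own leaves; the phrase ``when $y_n$ stays controlled'' is exactly the statement that has to be proved. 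The paper's mechanism is a long, thin $\W^c$-foliation box $U$ around $[x,f(x)]_c$ together with the continuity of $\rho$ at $x$: the bound $|\rho(y)-\rho(x)|<\delta$ on a small ball $B_{\epsilon_x}(x)$ forces $f(y)$ to lie in the \emph{same center plaque of $U$} as $y$, hence $\rho(y)=\length[y,f(y)]_c$ there, and continuity then follows from Hausdorff convergence of plaque segments; the same confinement, transported along the box, handles points $z\in\W^c_\delta(x)$ and their transverse approximating sequences. Without this plaque-confinement step your upper bound on $\limsup_n\rho(z_n)$ is not justified.

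For local path connectedness, the step ``$Y$ being residual in the locally connected space $M$ makes its connected components open'' (and likewise ``descends to a residual, hence after shrinking connected, subset'' of the transversal disc) is false: residual subsets can be totally disconnected, e.g.\ the set of points of a disc with both coordinates irrational is a dense $G_\delta$ and totally disconnected, so no shrinking restores connectedness. Residuality alone cannot give the conclusion. What does give it — and this is a byproduct of the correct saturation argument — is that every point of $Y$ has a \emph{whole open ball} consisting of continuity points of $\rho$; consequently, near each of its points $Y$ is an open set minus at most countably many center plaques of circle leaves. Since those plaques are one-dimensional and $\dim M=\dim E^s+1+\dim E^u\geq 3$, they have codimension at least two, and an open ball minus countably many such closed plaques is path connected at every scale; this is how the paper concludes, and it is the ingredient your proposal is missing.
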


\begin{proof}
By Lemma \ref{lemmafinitelymanyleavesoflength<R} the set $\{W\in \W^c:W\text{ is compact}\}$ has countably many elements. Moreover, each of these elements is a nowhere dense subset of $M$. Since $X$ is residual in $M$ it follows that $Y=X\setminus \big(\bigcup_{W\in \W^c, W\text{ is compact}}W\big)$ is also residual in $M$.

Let us see now that $Y$ is saturated by leaves of $\W^c$. Fix $\delta>0$. %and let $\delta'>0$ be such that $f(\W^c_\delta(x))$ is a subset of $\W^c_{\delta'}(f(x))$ for every $x\in M$.
The goal will be to see that $\W^c_\delta(x)$ is a subset of $Y$ for every $x\in Y$.

Let $x\in Y$. The leaf $\W^c(x)$ is a line. Let $[x,f(x)]_c$ denote the segment joining $x$ with $f(x)$ inside $\W^c(x)$. Let $U$ be an open $\W^c$-tubular neighborhood of $[x,f(x)]_c$. More precisely, $U$ is the image of  a certain homeomorphism over its image $\Psi:\R^{\dim(M)-1}\times \R\to M$ such that $\Psi(p\times \R)$ is contained in a leaf of $\W^c$ for every $p\in \R^{\dim(M)-1}$ and $[x,f(x)]_c\subset \Psi(0\times \R)$.

The open tubular neighborhood $U$ can be considered  `long and thin' enough, and a ball $B_{\epsilon_x}(x)\subset U$ for some $\epsilon_x>0$ considered small enough, so that for every $y\in B_{\epsilon_x}(x)$ one has that
\begin{enumerate}[label=(\roman*)]
\item\label{item1YWcsat} $\W^c_{\rho(x)+2\delta}(y)$ is contained in a center plaque of $U$,
\item\label{item2YWcsat} $f(\W^c_{2\delta}(y))$ is contained in a center plaque of $U$ and

\item\label{item3YWcsat} $|\rho(y)-\rho(x)|<\delta$.
\end{enumerate}

We claim first that every point in $B_{\epsilon_x}(x)$ is a continuity point of $\rho$. Indeed, for every $y\in B_{\epsilon_x}(x)$ it follows from \ref{item3YWcsat} that the set $\W^c_{\rho(y)+\delta}(y)$ is a subset of $\W^c_{\rho(x)+2\delta}(y)$. By \ref{item1YWcsat} the set $\W^c_{\rho(x)+2\delta}(y)$ is contained in a center plaque of $U$, and $f(y)$ lies in $\W^c_{\rho(y)+\delta}(y)$ by definition of $\rho$, so $y$ and $f(y)$ must lie in the same center plaque of $U$. 

For every $y\in B_{\epsilon_x}(x)$ let $[y,f(y)]_c$ be the center segment in $U$ joining $y$ with $f(y)$. Since $\W^c_{\rho(y)+\delta}(y) \subset U$ it follows that $\rho(y)=\length [y,f(y)]_c$ for every $y\in B_{\epsilon_x}(x)$.

Given a sequence $(y_n)$ in $B_{\epsilon_x}(x)$ converging to $y\in B_{\epsilon_x}(x)$, the center plaque of $U$ containing $y_n$ approaches the one containing $y$. Since $f(y_n)$ tends to $f(y)$ it follows that $[y_n,f(y_n)]_c$ converges to $[y,f(y)]_c$ in the Hausdorff topology. Then $\rho(y_n)$ converges to $\rho(y)$. This proves the claim. 

Suppose now that $z$ is a point in $\W^c_\delta(x)$ and $(z_n)$ is a sequence converging to $z$. Let $(x_n)$ be a sequence converging to $x$ so that $z_n$ belongs to $\W^c_\delta(x_n)$ for every $n$. For every $n$ large enough the point $x_n$ lies in $B_{\epsilon_x}(x)$, so $x_n$ and $f(x_n)$ lie in the same center plaque of $U$ as it was seen above.

Modulo dropping the first iterates of the sequence suppose without loss of generality that $x_n$ lies in $B_{\epsilon_x}(x)$ for every $n$. Then by \ref{item1YWcsat} and \ref{item2YWcsat} one has that $\W^c_\delta(x_n)\subset U$ and $f(\W^c_\delta(x_n))\subset U$ for every $n$.

Since $[x_n,f(x_n)]_c$ is contained in a center plaque of $U$ it follows that $\W^c_\delta(x_n)\cup [x_n,f(x_n)]_c \cup f(\W^c_\delta(x_n))$ is a center segment that is also contained in the same center plaque of $U$. Since $z_n$ lies $\W^c_\delta(x_n)$ and $f(z_n)$ lies in $f(\W^c_\delta(x_n))$ one obtains that  $z_n$ and $f(z_n)$ also lie in the same center plaque of $U$ for every $n$.

Let $[z_n,f(z_n)]_c$ denote the center segment in $U$ joining $z_n$ with $f(z_n)$ for every $n$. Since $x_n$ lies in $B_{\epsilon_x}(x)$ we have shown above that $\W^c_{\rho(x_n)+\delta}(x_n)$ is contained in a center plaque of $U$. So $z_n\in \W^c_\delta(x_n)$ implies that $\W^c_{\rho(z_n)}(z_n)$ is contained in the same center plaque of $U$. This shows that $\rho(z_n)$ is equal to the length of $[z_n,f(z_n)]_c$ for every $n$.

As above, the sequence $[z_n,f(z_n)]_c$ needs to converge to $[z,f(z)]_c$ in the Hausdorff topology. One  obtains that $\rho(z_n)$ converges to $\rho(z)$. That is, $z$ is a continuity point of $\rho$.

We have shown that for every $x\in Y$ the set $\W^c_\delta(x)$ is a subset of $Y$ for some uniform $\delta>0$. We conclude that $Y$ is saturated by leaves of $\W^c$.

Locally, the set $Y$ is an open subset of $M$ minus, at most, countably many center plaques corresponding to circle leaves of $\W^c$. It is immediate from this that $Y$ is locally path connected.
\end{proof}

\section{No $s$ or $u$ self-accumulation of center leaves}

The goal of this section is to show in Proposition \ref{propnouautobads2} that there is no self-accumulation of center leaves inside leaves of $\W^{cs}$ and $\W^{cu}$.

Let us start by pointing out two lemmas that will be needed.

\begin{lemma}\label{lemmafinitelineswithfixedpointsandbounded} There are at most finitely many line leaves of $\W^c$ having fixed points. Moreover, in every such a leaf the fixed points lie in a bounded interval of the leaf.
\end{lemma}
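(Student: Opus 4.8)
The plan is to deduce both assertions from a local confinement property of the fixed‑point set, combined with compactness. First note that $\mathrm{Fix}(f)=\{x\in M:\rho(x)=0\}$, so by the lower semicontinuity of $\rho$ (Proposition \ref{proprhosemicont}) the set $\mathrm{Fix}(f)$ is closed in $M$, hence compact. The key step I would isolate is: \emph{every fixed point $x$ of $f$ has a neighbourhood $U_x$ such that $\mathrm{Fix}(f)\cap U_x$ is contained in the plaque of $\W^c$ through $x$} — a bounded arc of $\W^c(x)$.

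To prove this confinement property I would argue as follows. Fix $\kappa>1$ with $\max\{\|Df_z\|,\|Df^{-1}_z\|\}<\kappa$ for every $z\in M$, as in the proof of Lemma \ref{lemmafinitelymanyleavesoflength<R}, and choose $\epsilon>0$ small enough that on the ball $B_{2\kappa\epsilon}(x)$ the transverse pairs of foliations $(\W^{cs},\W^u)$ and $(\W^{cu},\W^s)$ have local product structure: $\W^{cs}_{loc}(p)\cap\W^u_{loc}(q)$ and $\W^{cu}_{loc}(p)\cap\W^s_{loc}(q)$ are single points for $p,q$ in that ball, and the local leaves involved meet the ball in single plaques. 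Set $U_x=B_\epsilon(x)$ and take $y\in\mathrm{Fix}(f)\cap U_x$. Let $z$ be the unique point of $\W^{cs}_{loc}(x)\cap\W^u_{loc}(y)$. Since $x$ and $y$ are fixed and $f$ is $\kappa$‑Lipschitz, $f(z)$ still lies in $B_{2\kappa\epsilon}(x)$; as $f$ preserves $\W^{cs}$ and $\W^u$ this gives $f(z)\in\W^{cs}_{loc}(x)\cap\W^u_{loc}(y)$, whence $f(z)=z$. So $z$ is a fixed point with $y\in\W^u_{loc}(z)$, and the uniform expansion of $f$ along $\W^u$‑leaves (under iteration) forces $y=z\in\W^{cs}_{loc}(x)$. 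Running the symmetric argument with $f^{-1}$ and the pair $(\W^{cu},\W^s)$ yields $y\in\W^{cu}_{loc}(x)$. Hence $y\in\W^{cs}_{loc}(x)\cap\W^{cu}_{loc}(x)$, which (for $\epsilon$ small) is the plaque of $\W^c$ through $x$, a bounded arc of $\W^c(x)$.

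With the confinement property in hand, the conclusion is a compactness argument. Cover $\mathrm{Fix}(f)$ by finitely many neighbourhoods $U_{x_1},\dots,U_{x_k}$ as above; then $\mathrm{Fix}(f)$ is contained in the union of the $k$ bounded center arcs $\W^c_{loc}(x_i)\subset\W^c(x_i)$. Consequently any line leaf $W$ carrying a fixed point of $f$ must coincide with one of the leaves $\W^c(x_i)$, so at most $k$ line leaves carry fixed points; and for such a $W$ the set $\mathrm{Fix}(f)\cap W$ is contained in the union of those $\W^c_{loc}(x_i)$ with $\W^c(x_i)=W$, a finite union of bounded arcs of $W$, hence lies in a bounded interval of $W$. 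I expect the genuinely delicate point to be the confinement property itself: the center direction at $x$ is neutral and offers no hyperbolic estimate, so the argument must instead use the Lipschitz bound on $f$ to keep the auxiliary point $z$ inside the product chart — exactly the mechanism already exploited in the proof of Lemma \ref{lemmafinitelymanyleavesoflength<R}.
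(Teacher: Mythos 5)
Your argument is correct and follows essentially the same route as the paper: both confine fixed points to a single center plaque per chart via transverse (normal) hyperbolicity and local product structure, and then conclude by a finite cover and compactness. The paper simply covers all of $M$ and cites the transverse hyperbolicity mechanism from Lemma \ref{lemmafinitelymanyleavesoflength<R} to get at most one plaque $I_x$ per box with $f(I_x)\cap I_x\neq\emptyset$, whereas you cover $\mathrm{Fix}(f)$ and spell out that mechanism (fixedness of the intersection point of the invariant local leaves, then collapse by expansion/contraction along $\W^u$ and $\W^s$), which is a correct filling-in of the same idea.
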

\begin{proof}
Given $x\in M$ let $U_x$ be a small $\W^c$ foliation box neighborhood of $x$. By transverse hyperbolicity (see for example Lemma \ref{lemmafinitelymanyleavesoflength<R} for more details) the set $U_x$ contains at most one center plaque $I_x$ such that $f(I_x)\cap I_x\neq \emptyset$. Consider a finite subcover $\{U_{x_1},\ldots,U_{x_N}\}$ of $M$. Every fixed point of $f$ must lie in $I_{x_1}\cup \ldots \cup I_{x_N}$. This proves the lemma.
\end{proof}

\begin{lemma}\label{lemmatwocircleleaves}
Two distinct circle leaves of $\W^c$ can not intersect the same leaf of $\W^s$. Moreover, every circle leaf of $\W^c$ intersects every leaf of $\W^s$ in at most one point. Analogously for $\W^u$ leaves.
\end{lemma}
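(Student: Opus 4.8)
The plan is to combine two elementary facts: (i) distinct leaves of $\W^c$ are disjoint, so two distinct \emph{compact} center leaves lie at positive Riemannian distance $d(\cdot,\cdot)$; and (ii) the stable foliation is uniformly contracted along forward orbits, so that if $y,y'$ lie on a common leaf of $\W^s$ then $d(f^ny,f^ny')\to 0$ as $n\to+\infty$, and in fact the intrinsic $\W^s$-distance $d^s(f^ny,f^ny')$ tends to $0$. Fact (ii) follows at once from $\|Df^\ell_xv^s\|<\tfrac12\|v^s\|$ for $v^s\in E^s$ together with the $f$-invariance of $\W^s$: a $C^1$ curve in a stable leaf joining $y$ to $y'$ has its $f^{k\ell}$-image of length at most $2^{-k}$ times the original.

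First I would prove that two distinct circle leaves $C_1,C_2$ of $\W^c$ cannot both meet a leaf $\W^s(z)$. Suppose $x_i\in C_i\cap\W^s(z)$ for $i=1,2$. Then $x_1$ and $x_2$ lie on the same $\W^s$-leaf, so by (ii) $d(f^nx_1,f^nx_2)\to 0$. On the other hand $f(C_i)=C_i$ forces $f^nx_i\in C_i$ for all $n$, and by (i) $\eta:=d(C_1,C_2)>0$, so $d(f^nx_1,f^nx_2)\ge\eta$ for every $n$ --- a contradiction.

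For the second assertion, fix a circle leaf $C$. As in the proof of Lemma~\ref{lemmafinitelymanyleavesoflength<R}, transverse hyperbolicity near the compact $f$-invariant submanifold $C$ furnishes $\epsilon>0$ such that $\W^s_\epsilon(y)\cap\W^s_\epsilon(y')=\emptyset$ whenever $y\neq y'$ in $C$; consequently $\W^s_\epsilon(y)\cap C=\{y\}$ for every $y\in C$, since a point $w\in\W^s_\epsilon(y)\cap C$ with $w\neq y$ would lie in $\W^s_\epsilon(y)\cap\W^s_\epsilon(w)$. Now suppose $p\neq q$ both lie in $C\cap\W^s(z)$. Since $\W^s$ is $f$-invariant, $f^nq\in\W^s(f^np)$ for all $n$, and by (ii) we may pick $n$ with $d^s(f^np,f^nq)<\epsilon$, i.e.\ $f^nq\in\W^s_\epsilon(f^np)$. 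As $f^nq\in f^n(C)=C$ as well, we get $f^nq\in\W^s_\epsilon(f^np)\cap C=\{f^np\}$, whence $f^nq=f^np$ and so $q=p$, contradicting $p\neq q$. Finally, applying the two arguments above to $f^{-1}$ --- which is partially hyperbolic with the roles of $E^s$ and $E^u$ interchanged, preserves $\W^u$, and still fixes each compact leaf of $\W^c$ --- yields the analogous statements for $\W^u$.

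The only step that is not entirely routine is the production of the uniform $\epsilon$. Although $E^s$ is not transverse to $E^c$ in the sense of spanning $TM$, it meets $E^c$ only at the origin, so a compact (hence embedded) center leaf $C$ crosses each nearby $\W^s$-plaque exactly once, and a compactness argument over $C$ makes the size uniform. This is precisely the local behavior already invoked for Lemma~\ref{lemmafinitelymanyleavesoflength<R}, so I would cite that reasoning rather than reprove it.
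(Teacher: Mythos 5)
Your proof is correct and follows essentially the same route as the paper: forward contraction of stable leaves versus the positive distance between disjoint compact center leaves for the first claim, a uniform $\epsilon$ at which small stable discs meet the circle leaf only at their center for the second, and iteration of $f^{-1}$ for the unstable case. The only cosmetic difference is that you justify the uniform $\epsilon$ via the pairwise disjointness of small stable discs along $C$ (as in Lemma~\ref{lemmafinitelymanyleavesoflength<R}), whereas the paper appeals directly to compactness of the leaf; both are fine.
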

\begin{proof}
Suppose by contradiction that two distinct circle leaves $W,W'\in \W^c$ intersect the same leaf of $\W^s$ in points $x\in W$ and $y\in W'$. By iterating forwards one obtains that $d(f^n(x),f^n(y))$ tends to zero. By the center fixing property $f^n(x)$ and $f^n(y)$ are points in $W$ and $W'$, respectively, for every $n$. This contradicts the fact that the disjoint compact sets $W$ and $W'$ are at positive distance from each other. 

Given a circle leaf $W\in \W^c$ there exists $\delta>0$ such that $\bigcup_{x\in W}\W^s_\delta(x)\setminus \{x\}$ is disjoint from $W$, otherwise $W$ would not be compact. If one supposes, by contradiction, that $W$ intersects the same leaf of $\W^s$ in two different points $x$ and $y$ then $d(f^n(x),f^n(y))$ will be smaller than $\delta$ for some large enough $n>0$. Since $f^n(x)$ and $f^n(y)$ are points in $W$ this contradicts that $\W^s_\delta(f^n(x))\setminus \{f^n(x)\}$ is disjoint from $W$.

Analogously for $\W^u$ leaves by iterating backwards by $f$.
\end{proof}

\begin{notation} From now on, given $x\in M$ and $y\in \W^c(x)$ such that $\W^c(x)$ is a line, let $(x,y)_c$ denote the open center segment from $x$ to $y$ inside $\W^c(x)$. In case $\W^c(x)$ is a circle let $(x,y)_c$ denote the center segment joining $x$ and $y$, and containing $X^c_t(x)$ for every small enough positive $t$. 

Moreover, in case $\W^c(x)$ is a line let $(x,-\infty)_c$ and $(x,+\infty)_c$ denote the connected component of $\W^c(x)\setminus x$ containing negative and positive $X^c_t$-iterates of $x$, respectively. 

Analogously let us define the closed $[x,y]_c$ and half-open $[x,y)_c$ and $(x,y]_c$ center segments.
\end{notation}

The following proposition state the no `self-accumulation' of $\W^c$ leaves inside leaves of $\W^{cs}$ and $\W^{cu}$.

\begin{prop}\label{propnouautobads2}
For every $x\in M$ there exists $\epsilon>0$ so that $\W^c(x)\cap\W^s_\epsilon(x)=\{x\}$ and $\W^c(x)\cap\W^u_\epsilon(x)=\{x\}$.
\end{prop}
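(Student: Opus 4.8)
The plan is to argue by contradiction and exploit the center-fixing property together with transverse hyperbolicity, much in the spirit of Lemma~\ref{lemmatwocircleleaves} but now allowing line leaves. Suppose the statement fails at some $x$, say along $\W^s$; then there are points $y_n\in \W^c(x)\cap\W^s(x)$ with $y_n\to x$, $y_n\neq x$, and moreover $y_n$ lies on the stable manifold of $x$ at intrinsic $\W^s$-distance tending to $0$. Since $f$ fixes the leaf $\W^c(x)$ and fixes $\W^s(x)$, each iterate $f^k(x)$ and $f^k(y_n)$ remains in $\W^c(x)\cap \W^s(x)$. The key tension is between two facts: (a) along $\W^s$, forward iteration contracts distances uniformly, so $d_{\W^s}(f^k(x),f^k(y_n))\to 0$ geometrically in $k$; (b) along $\W^c(x)$, the points $x$ and $y_n$ are separated by a center segment $(x,y_n)_c$ of some definite length, and we need to understand how this center length evolves under iteration.

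The first step is to reduce to a compactness-friendly setting. Using Lemma~\ref{lemmaWcorientableandfpreservesorient} we may assume $\W^c$ is orientable and $f$ preserves center orientation, so the center flow $X^c_t$ and the notation $(x,y)_c$ are available. Then I would distinguish whether $\W^c(x)$ is a circle or a line. The circle case is already handled: if $\W^c(x)$ is compact, Lemma~\ref{lemmatwocircleleaves} (second statement) says $\W^c(x)$ meets each $\W^s$-leaf in at most one point, and in particular meets $\W^s_\epsilon(x)$ only at $x$ for $\epsilon$ small — so there is nothing to prove there. Thus the real case is $\W^c(x)$ a line. Here I would look at the center-segment lengths $\ell_n:=\length\, (x,y_n)_c$. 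Replacing $y_n$ by the point of $\W^c(x)\cap\W^s_\epsilon(x)$ closest to $x$ in the \emph{stable} metric for suitable small $\epsilon$, and passing to a subsequence, either $\ell_n$ stays bounded away from $0$, or $\ell_n\to 0$.

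If $\ell_n\to 0$, then $y_n\to x$ both along the center leaf and along the stable leaf, and the segments $(x,y_n)_c$ together with the short stable arcs $[y_n,x]^s$ bound small loops; but a $C^1$ foliation box around $x$ for $\W^{cs}$ (whose leaves are subfoliated by $\W^c$ and by $\W^s$) forces $\W^c$-plaques and $\W^s$-plaques through $x$ to meet only at $x$, a contradiction with $y_n\in\W^c(x)\cap\W^s(x)$, $y_n\neq x$, once $y_n$ is inside the box. If instead $\ell_n\geq \ell_0>0$ for all $n$ while $d_{\W^s}(x,y_n)\to 0$, I iterate backward: $f^{-k}$ expands along $\W^s$, so for each fixed $n$ there is a first time $k_n$ at which $d_{\W^s}(f^{-k_n}(x),f^{-k_n}(y_n))$ exits a fixed small scale $\epsilon_0$ (this $k_n\to\infty$). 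Along the subsequence, $f^{-k_n}(x)\to z$ and $f^{-k_n}(y_n)\to w$ with $z\neq w$, $w\in\W^s_{\epsilon_0}(z)$, and $w\in\W^c(z)$ by closedness of the center leaf relation inside a $\W^{cs}$ foliation box; moreover the center length $\length\,(f^{-k_n}(x),f^{-k_n}(y_n))_c$ is controlled since the whole configuration sits, for large $k_n$, inside a single $\W^{cs}$ foliation box where center-plaque length is comparable to ambient distance — so the limiting center segment $(z,w)_c$ has finite, positive length. But then $z$ and $w$ are two distinct points of $\W^c(z)\cap\W^s(z)$ at bounded center-distance, and iterating \emph{forward} from $z$ contracts the stable distance while the center distance $d_c(f^k(z),f^k(w))$ cannot shrink past a fixed positive amount inside foliation boxes — running the same extraction once more yields an honest periodic-type obstruction, contradicting uniqueness of plaque intersections in a $\W^{cs}$-box. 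The $\W^u$ statement is identical with $f$ replaced by $f^{-1}$ and $\W^{cs}$ by $\W^{cu}$.

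The main obstacle I anticipate is the bookkeeping in the second case: controlling the center-segment length $\length\,(f^{-k_n}(x),f^{-k_n}(y_n))_c$ in the limit. Uniform contraction on $\W^s$ is clean, but $f$ may expand or contract along $E^c$, so center lengths are not \emph{a priori} controlled; the fix is to observe that once the stable distance is at the small fixed scale $\epsilon_0$ and the two points are on the same center \emph{and} stable leaf, the entire configuration lies in one $\W^{cs}$ foliation chart, and there the center length is bi-Lipschitz to the ambient distance, which is bounded. Making that "lies in one foliation chart" step precise — i.e.\ that the center segment, not just its endpoints, stays in the chart — is where care is needed, and it is essentially a first-exit / connectedness argument on the center segment analogous to the ones already used in the proof of Proposition~\ref{propYresidualWcsatpathconn}.
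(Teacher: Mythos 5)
Your reduction steps are fine (the circle case via Lemma~\ref{lemmatwocircleleaves}, and the case $\ell_n\to 0$ via a single $\W^{cs}$-foliation box), but note that in the contradiction scenario the only case that can actually occur is $\ell_n\to\infty$: a compact center arc meets a small $\W^s$-disc transversely in finitely many points, so bounded $\ell_n$ is impossible. That is precisely where your argument has a genuine gap. After extracting the first-exit times $k_n$, you need $f^{-k_n}(y_n)\in\W^c\big(f^{-k_n}(x)\big)$ to survive in the limit, and for that you assert that the center segment $\big(f^{-k_n}(x),f^{-k_n}(y_n)\big)_c$ has bounded length because ``the whole configuration sits inside a single $\W^{cs}$ foliation box.'' Nothing justifies this: only the \emph{endpoints} are at stable distance $\approx\epsilon_0$; the connecting center segment is the image under $f^{-k_n}$ of a segment of length $\ell_n\to\infty$, and since $Df$ can expand or contract $E^c$ arbitrarily there is no control on its length or on where it travels. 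Claiming it stays in one chart is essentially the statement being proven (no long center excursion can return to a small transversal), so the fix you propose is circular; the analogy with Proposition~\ref{propYresidualWcsatpathconn} does not help, because there the short center segments came from continuity of $\rho$ at a continuity point, which is unavailable here. The final step (``running the same extraction once more yields an honest periodic-type obstruction'') is also not an argument: forward iteration contracts the stable distance but again gives no control on the center distance, so no contradiction with plaque uniqueness follows.

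For comparison, the paper avoids limits of long center segments altogether. It first uses Lemma~\ref{lemmatwocircleleaves} to choose $\delta$ so that $\W^u_\delta(x)$ (resp.\ $\W^s_\delta(x)$) meets no circle leaf, and Lemma~\ref{lemmafinitelineswithfixedpointsandbounded} to place $x$ past the last fixed point, so that $[x,f(x))_c$ is a fundamental domain and $y_n=f^{k_n}(x_n)$ with $x_n\in[x,f(x))_c$, $|k_n|\to\infty$. It then builds a center-holonomy foliation box $U$ over $\W^u_{\delta'}(x)$ and shows that, for $N$ large, the composition of $f^{\pm k_N}$ with the center holonomy maps $\W^u_{\delta'}(x)$ strictly into itself; a fixed point $z_0$ of this map has $f^{\pm k_N}(z_0)$ inside its own center plaque $I_{z_0}$, which by the center-fixing property and orientation preservation forces $\W^c(z_0)$ to be a circle meeting the chosen transversal --- contradicting the choice of $\delta$. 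That topological fixed-point mechanism, producing a compact center leaf near $x$, is the ingredient your proposal is missing, and I do not see how to close your second case without it.
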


\begin{proof} Given $x\in M$, let us see that there exists $\epsilon>0$ so that $\W^c(x)=\{x\}\cap \W^u_\epsilon(x)$. For $s$ leaves the reasoning is analogous.

Suppose, by contradiction, that there exists a sequence $(y_n)_{n\geq 0}$ in $\W^c(x) \cap (\W^u(x)\setminus \{x\})$ converging to $x$ in $\W^u(x)$.

Note first that $\W^c(x)$ needs to be a line leaf of $\W^c$. Indeed, $\W^c(x)$ must contain infinitely many disjoint center arcs whose lengths are bounded from below, one for each distinct point in $(y_n)_{n\geq 0}$ close enough to $x$ in $\W^u(x)$. So the length of $\W^c(x)$ has to be unbounded.

Moreover, it follows by Lemma \ref{lemmatwocircleleaves} that at most one circle leaf of $\W^c$ contain points from $(y_n)_n$ and, by a similar argument as before, these points can not accumulate in $x$. One obtains that there exists $\delta>0$ such that $\W^u_\delta(x)$ contains no circle leaf of $\W^c(x)$. Let us suppose, without loss of generality, that $y_n$ lies in $\W^u_\delta(x)$ for every $n\geq 0$.

As $\W^c(x)$ is a line it follows that $y_n\xrightarrow{n} \infty$ in $\W^c(x)$, meaning that for every $R>0$ the point $y_n$ does not lie in $\W^c_R(x)$ for every $n$ large enough. Modulo subsequence, suppose without loss of generality that $y_n$ tends to $+\infty$ in $\W^c(x)$, meaning that for every $R>0$ the point $y_n$ lies in $[x,+\infty)_c\setminus \W^c_R(x)$ for every $n$ large enough. It will be clear from the proof that if $y_n$ tends to $-\infty$ the arguments are analogous to get to a contradiction.

By Lemma \ref{lemmafinitelineswithfixedpointsandbounded} there exists $p\in \W^c(x)$ the `last' fixed point of $f$ in $\W^c(x)$ (if any fixed point in $\W^c(x)$ exists), meaning that $(p,+\infty)_c$ contains no fixed points. In case $\W^c(x)$ has no fixed point of $f$ let $p=-\infty$. Note that for every $x'$ in $\W^c(x)$ one can project every large enough element of the sequence $(y_n)_{n\geq 0}$ by center holonomy to obtain a sequence in $\W^c(x) \cap (\W^u_\delta(x')\setminus \{x'\})$ converging to $x'$. We can assume then, without loss of generality, that $x$ lies in $(p,+\infty)_c$.

Note that the half-open center segment $[x,f(x))_c$ is a fundamental domain for the dynamics $f|_{(p,+\infty)_c}:(p,+\infty)_c\to (p,+\infty)_c$. So for every $n\geq 0$ there exists $x_n\in [x,f(x))_c$ and $k_n\in \mathbb{Z}$ such that $y_n=f^{k_n}(x_n)$. Note that either $k_n\to -\infty$ or $k_n\to +\infty$ as $n$ tends to $+\infty$. 

Again, without loss of generality, let us consider $\delta>0$ small enough so that so that $\W^u_\delta(w)\cap \W^u_\delta(w')=\emptyset$ for every $w,w'\in [x,f(x)]_c$ such that $w\neq w'$. And small enough so that $\W^s_\delta([x,f(x))_c)$, its image by $f$ and its image by $f^{-1}$ are two-by-two disjoint.

For $\delta'>0$ small enough let $H^c:\W^u_{\delta'}(x)\to \W^u_\delta(f(x))$ be a \emph{center holonomy map} so that $I_z:=[z,H^c(z)]_c$ is a center segment in $\W^u_{\delta}([x,f(x)]_c)$ for every $z\in \W^u_{\delta'}(x)$. Let $U$ be the set $\bigcup_{z\in \W^u_{\delta'}(x)}I_z$. Note that $U$ is a foliation box neighborhood for the foliations $\W^c$ and $\W^u$ restricted to $\W^{cu}(x)$.

In $U$ consider $\pi^u:U\to \W^u_{\delta'}(x)$ the projection along centers so that $\pi^u(I_z)=z$ for every $z\in \W^u_{\delta'}(x)$. And for every $n$ let $H^c_n:\W^u_{\delta'}(x) \to \W^u_{\delta}(x_n)$ be such that $H^c_n(z)$ is the intersection of $I_z$ with $\W^u_{\delta}(x_n)$ for every $z\in \W^u_{\delta'}(x)$.

Let $\delta''>0$ be such that $\W^u_{\delta''}(z)$ is contained in $U$ for every $z$ in the segment $[x,f(x)]_c$. In particular, for every $n$ the inverse map $(H^c_n)^{-1}$ is well defined from $\W^u_{\delta''}(x_n)$ to $\W^u_{\delta'}(x)$.

Consider $N$ large enough so that $y_N$ lies in $\W^u_{\delta'}(x)$. And so that $f^{-|k_N|}$ contracts distances enough so that $\W^u_{\delta'}(w)$ is sent inside $\W^u_{\delta''/2}(f^{-|k_N|}(w))$ for every $w\in M$.

In case $k_N>0$ the map $(H^c_N)^{-1} \circ f^{-k_N}$ is a continuous map that sends $\W^u_{\delta'}(x)$ strictly inside itself. In case $k_N<0$ then $f^{k_N} \circ H^c_N$ is a continuous map that sends $\W^u_{\delta'}(x)$ strictly inside itself. Covering both scenarios at the same time, let $z_0$ denote the fixed point of this map. In the first case $f^{-k_N}(z_0)$ lies in $I_{z_0}$ and in the second case $f^{k_N}(z_0)$ lies in $I_{z_0}$. See Figure \ref{fig1} for a schematic illustration.

\begin{figure}[htb]
\def\svgwidth{0.8\textwidth}
\begin{center} 
{\scriptsize
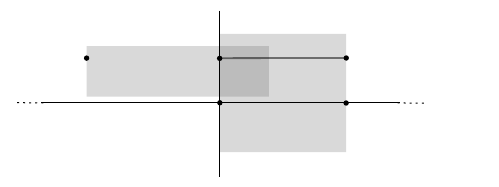
}
\caption{A large iterate of $f$ sends the $\W^c$-foliation box $U$ transverse to itself, so the iterate of some plaque $I_{z_0}$ of $U$ has to intersect itself. As a consequence, $\W^c(x_0)$ needs to be compact by the center fixing property.}\label{fig1} 
\end{center}
\end{figure}

Note that $f|_{\W^c(z_0)}:\W^c(z_0)\to \W^c(z_0)$ is an orientation preserving homeomorphism satisfying that $I_{z_0}$ is contained in $[x_0,f(x_0)]_c$ or $[x_0,f^{-1}(x_0)]_c$ because $\W^s_\delta([x,f(x))_c)$, its image by $f$ and its image by $f^{-1}$ were taken two-by-two disjoint. Since $f^{-k_N}(z_0)$ or $f^{k_N}(z_0)$ lies in $I_{z_0}$ for some $|k_N|$ large the only possibility is that $\W^c(z_0)$ is a circle.

We obtain a circle leaf of $\W^c$ intersecting $\W^s_\delta(x)$ and this gives us a contradiction.
\end{proof}

\section{Continuity of $\rho$ in lines of $\W^c$}

The goal of this section is to show that $\rho$ is continuous at every point in a line leaf of $\W^c$. This is attained in Corollary \ref{corestrellatransitive}.

\begin{definition}
We say that $x$ is a \emph{$s$-continuity point of $\rho$} if there exists $\delta>0$ such that $\rho$ restricted to $\W^s_\delta(x)$ is continuous. Analogously we define a \emph{$u$-continuity point of $\rho$}.
\end{definition}

Because of the following lemma, which is a direct consequence of dynamical coherence, it is enough to study the continuity of $\rho$ in restriction to $\W^s$ and $\W^u$ leaves.

\begin{lemma}\label{lemmasanducontpointtheninX}
Suppose $x\in M$ such that $\W^c(x)$ is a line leaf. If $x$ is a $s$ and $u$-continuity point of $\rho$, then $\rho$ is continuous at $x$.
\end{lemma}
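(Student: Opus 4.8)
The plan is to take an arbitrary sequence $y_n\to x$ and prove $\rho(y_n)\to\rho(x)$. Since $\rho$ is lower semicontinuous (Proposition \ref{proprhosemicont}) we already have $\liminf_n\rho(y_n)\ge\rho(x)$, so the only thing to establish is $\limsup_n\rho(y_n)\le\rho(x)$. A preliminary observation is that $\rho$ is automatically continuous in restriction to the leaf $\W^c(x)$ at the point $x$: reading $f|_{\W^c(x)}$ in the arc-length coordinate given by the center flow $X^c_t$ (legitimate since $\W^c(x)$ is a line) it becomes an orientation preserving homeomorphism $g$ of $\mathbb R$ with $g(0)=\pm\rho(x)$, and $\rho(X^c_s(x))=|g(s)-s|$ depends continuously on $s$. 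This is the first place where the line hypothesis is used.

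To bound $\rho(y_n)$ I would use the local product structure coming from dynamical coherence. Since $\W^{cu}$ is transverse to $\W^s$, for $n$ large there is $a_n\in\W^s_{\mathrm{loc}}(x)\cap\W^{cu}_{\mathrm{loc}}(y_n)$ with $a_n\to x$; and, using the local product structure of $\W^c$ and $\W^u$ inside the leaf $\W^{cu}(a_n)=\W^{cu}(y_n)$, a point $b_n\in\W^c_{\mathrm{loc}}(a_n)\cap\W^u_{\mathrm{loc}}(y_n)$, again with $b_n\to x$ and with $d_c(a_n,b_n)\to 0$. Thus $y_n$ is reached from $a_n\in\W^s_{\mathrm{loc}}(x)$ by a short center move (to $b_n$) followed by a short unstable move (to $y_n$). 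Because $x$ is an $s$-continuity point, $\rho(a_n)\to\rho(x)$; in particular the center arc $[a_n,f(a_n)]_c\subset\W^c(a_n)$ has length at most $\rho(x)+1$ for $n$ large, i.e.\ it is a uniformly bounded compact arc. Since $b_n\to a_n$ and $\rho$ is continuous along the (line) leaf $\W^c(a_n)$ at $a_n$, the arc $[b_n,f(b_n)]_c$ also has length close to $\rho(a_n)$.

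Next I would transport $[b_n,f(b_n)]_c$ by unstable holonomy inside $\W^{cu}(b_n)$ onto a center arc $\sigma_n\subset\W^c(y_n)$. As $y_n$ is the holonomy image of $b_n$, the arc $\sigma_n$ starts at $y_n$; its other endpoint lies in $\W^u_{\mathrm{loc}}(f(b_n))\cap\W^c(y_n)$, and since $f(y_n)$ also lies there — it is in $\W^u(f(b_n))$ because $f$ preserves $\W^u$, and in $\W^c(y_n)$ by the center fixing property — Proposition \ref{propnouautobads2} applied at $f(y_n)$ forces that endpoint to be $f(y_n)$ itself. The crucial point is that this unstable holonomy is $C^0$-close to the identity, which is legitimate precisely because $[b_n,f(b_n)]_c$ is a uniformly bounded compact arc. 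Hence $\length(\sigma_n)$ is close to $\rho(a_n)$, so $\rho(y_n)\le\length(\sigma_n)\le\rho(x)+o(1)$, which together with lower semicontinuity gives continuity at $x$.

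The main obstacle is exactly the circularity lurking here: the local product structure only reaches a general point near $x$ from $x$ by also moving in the center direction, and without an a priori bound on $\rho$ near $x$ there is no control on the lengths of the center arcs being transported, hence none on the transporting holonomies. The hypotheses that $x$ be both an $s$- and a $u$-continuity point are what supply that a priori local boundedness (the $u$-continuity entering symmetrically, controlling the unstable-side transport just as $s$-continuity controls the stable-side one). A secondary nuisance, which I would deal with separately, is that center leaves through points $a_n,b_n$ near $x$ may be circles rather than lines, or lines carrying fixed points of $f$; these are controlled by the finiteness statements of Lemmas \ref{lemmafinitelymanyleavesoflength<R} and \ref{lemmafinitelineswithfixedpointsandbounded}, allowing one to reduce to the case where the relevant nearby center leaves are lines free of fixed points.
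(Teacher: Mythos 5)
Your overall scheme (decompose the approach to $x$ into a stable move plus a center/unstable move inside a $cu$-leaf, transport the center arc $[b_n,f(b_n)]_c$ by unstable holonomy, and combine with lower semicontinuity) is close in spirit to the paper's argument, and the pieces about leafwise continuity of $\rho$ (in fact $\rho$ is uniformly Lipschitz along leaves, since $d_c(f(a),f(b))\leq \|Df\|_\infty\, d_c(a,b)$) and about the a priori length bound coming from $s$-continuity are fine. The gap is at the decisive step where you identify the endpoint $e_n$ of the transported arc $\sigma_n$ with $f(y_n)$. Both $e_n$ and $f(y_n)$ lie in $\W^c(y_n)\cap \W^u_{\delta_n}(f(b_n))$ with $\delta_n\to 0$, but Proposition \ref{propnouautobads2} applied ``at $f(y_n)$'' only gives, for each $n$, some $\epsilon(f(y_n))>0$ with $\W^c(f(y_n))\cap\W^u_{\epsilon(f(y_n))}(f(y_n))=\{f(y_n)\}$; this constant depends on the point and nothing in the paper (or in your argument) makes it uniform over the varying sequence $f(y_n)$, so there is no reason why $\delta_n<\epsilon(f(y_n))$ for large $n$. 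In other words, the center leaf $\W^c(y_n)$ could a priori return to the tiny unstable plaque through $f(b_n)$ at a point distinct from $f(y_n)$, and then $\length(\sigma_n)$ bounds the center distance from $y_n$ to the wrong point, giving no bound on $\rho(y_n)$. Passing to the limit point $f(x)$ does not help either, because Proposition \ref{propnouautobads2} at $f(x)$ concerns only the leaf through $f(x)$ itself, not the nearby leaves $\W^c(y_n)$.

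A symptom of the same problem is that your proof never substantively uses the hypothesis that $x$ is a $u$-continuity point: the unstable holonomy over a compact center arc of bounded length is controlled by foliation regularity alone. In the paper's proof the $u$-continuity is exactly what removes the endpoint ambiguity: one builds two shadow curves, $\gamma^s_n$ through $x_n^s\in\W^s_{\mathrm{loc}}(x)$ (ending at $f(x_n^s)$, by $s$-continuity) and $\gamma^u_n$ through $x_n^u\in\W^u_{\mathrm{loc}}(x)$ (ending at $f(x_n^u)$, by $u$-continuity), and defines the candidate curve $\gamma_n\subset\W^c(x_n)$ inside the intersections $\W^{cu}_{\mathrm{loc}}(\gamma^s_n(t))\cap\W^{cs}_{\mathrm{loc}}(\gamma^u_n(t))$. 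Since $f(x_n)$ itself lies in this intersection at $t=1$ (because $f$ preserves $\W^{cs}$ and $\W^{cu}$ and the relevant distances are uniformly small), the curve can be chosen to end exactly at $f(x_n)$, with no appeal to a pointwise non-return constant. To repair your argument you would need either this two-sided shadowing, or a uniform-scale version of Proposition \ref{propnouautobads2} along a neighborhood of the fixed arc $[x,f(x)]_c$ (a product-box statement inside $\W^{cu}$), which is not what the proposition provides and would itself require proof.
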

\begin{proof}

Let $\gamma:[0,1]\to \W^c(x)$ be a homeomorphism from $[0,1]$ to the center segment $[x,f(x)]_c$ so that $\gamma(0)=x$ and $\gamma(1)=f(x)$. Since $x$ is a $s$ and $u$-continuity point of $\rho$ there exists $\delta>0$ such that for every $y\in \W^s_\delta(x)$ there exists $\gamma_y^s:[0,1]\to \W^c(y)$ satisfying $\gamma^s_y(0)=y$, $\gamma^s_y(t)\in\W^s(\gamma(t))$ for every $t\in [0,1]$ and $\gamma^s_y(1)=f(y)$. And such that for every $z\in \W^u_\delta(x)$ there exists $\gamma_z^u:[0,1]\to \W^c(y)$ satisfying $\gamma^u_z(0)=z$, $\gamma^u_z(t)\in\W^u(\gamma(t))$ for every $t\in [0,1]$ and $\gamma^u_z(1)=f(z)$.

Let $\epsilon_0>0$ be a small constant so that at scale $\epsilon_0$ one has local product structure for the invariant foliations of $f$. More precisely, let $\epsilon_0>0$ be small enough so that there exists a constant $C>0$ such that for every $x,y\in M$ satisfying $\delta':=d(x,y)<\epsilon_0$ one has that $\W_{C\delta'}^s(x)$ intersects $\W_{C\delta'}^{cu}(y)$, and $\W_{C\delta'}^u(x)$ intersects $\W_{C\delta'}^{cs}(y)$, and each of these intersection points is unique. The constant $C>0$ is needed to take into account the angles between the bundles.

One can consider $\delta>0$ small enough so that for every $y\in \W^s_\delta(x)$ and $z\in \W^u_\delta(x)$ one has $\gamma^s_y(t)\in\W^s_{\epsilon_0/2}(\gamma(t))$ and $\gamma^u_z(t)\in\W^u_{\epsilon_0/2}(\gamma(t))$ for every $t\in [0,1]$.

Suppose $(x_n)$ is a sequence converging to $x$ and let $\delta_n$ denote the distance $d(x,x_n)$ for every $n$. Suppose, modulo subsequence, that $\delta_n$ is smaller than $C^{-1}\delta$ for every $n$. Let $x_n^s$ be the point of intersection of $\W^s_{C\delta_n}(x)$ and $\W^{cu}_{C\delta_n}(x_n)$. And $x_n^u$ the point of intersection of $\W^u_{C\delta_n}(x)$ and $\W^{cs}_{C\delta_n}(x_n)$. 

For simplicity in the notation, for every $n$ let $\gamma_n^s$ and $\gamma_n^u$ denote the curves $\gamma^s_{x_n^s}$ and $\gamma^u_{x_n^u}$, respectively. We have that $\epsilon_n(t):=d(\gamma_n^s(t),\gamma_n^u(t))<\epsilon_0$ for every $t\in [0,1]$. It follows that $\W^{cu}_{C\epsilon_n(t)}(\gamma_n^s(t))$ and $\W^{cs}_{C\epsilon_n(t)}(\gamma_n^u(t))$ intersect for every $t\in [0,1]$.  Moreover, by dynamical coherence, one can construct a homeomorphism over its image $\gamma_n:[0,1]\to \W^c(x_n)$ such that $\gamma_n(0)=x_n$ and $\gamma_n(t)$ lies in  the intersection of $\W^{cu}_{C\epsilon_n(t)}(\gamma_n^s(t))$ and $\W^{cs}_{C\epsilon_n(t)}(\gamma_n^u(t))$ for every $t\in [0,1]$. 

Again, by dynamical coherence, it follows from $f(x_n^s)=\gamma_n^s(1)$ and $f(x_n^u)=\gamma_n^u(1)$ that $f(x_n)$ lies in the intersection of $\W^{cu}_{C\epsilon_n(1)}(\gamma_n^s(1))$ and $\W^{cs}_{C\epsilon_n(1)}(\gamma_n^u(1))$. One can chose $\gamma_n$ so that, in addition to the properties from the previous paragraph, it satisfies that $\gamma_n(1)=f(x_n)$.

Since the sequences $(x_n^s)$ and $(x_n^u)$ converge to $x$ it is immediate that $\gamma_n^s$ and $\gamma_n^u$ converge in the $C^0$ topology to $\gamma$. As a consequence, since $\delta_n$ tends to $0$, then $\gamma_n$ converges in the $C^0$ topology to $\gamma$ as well.

The maps $\gamma_n:[0,1]\to \W^c(x_n)$ form a sequence of homeomorphisms over its image joining $x_n$ with $f(x_n)$ and converging $C^0$ to $\gamma$. One obtains that $\lim_n \length(\gamma_n)=\length(\gamma)$. Since $\rho(x)=\length(\gamma)$, then $\liminf_n \rho(x_n)\leq \rho(x)$. By Proposition \ref{proprhosemicont} one has the converse inequality. It follows that $\lim_n\rho(x_n)=\rho(x)$. This shows that $x$ is a continuity point of $\rho$.
\end{proof}

The next proposition, which is a consequence of the no self-accumulation of centers leaves inside $cs$ or $cu$ leaves, is a key fact that  indicates that potential discontinuities of $\rho$ in the $s$ or $u$ direction at line leaves of $\W^c$ need some separation of center leaves in order to occur.

\begin{definition} For every $x\in M$ and $\epsilon>0$  we define  $$\Stable^+_\epsilon(x)=\{y\mid \exists \text{ homeo } h:\R_{\geq 0}\to \R_{\geq 0} \text{ s.t. }d(X^c_t(x),X^c_{h(t)}(y))\leq \epsilon \text{ }\forall \text{ }t\geq 0\}$$ the \emph{forwards $\epsilon$-stable set of $x$} for $X^c_t$. Analogously we define $\Stable^-_\epsilon(x)$ the \emph{backwards $\epsilon$-stable set of $x$} for $X^c_t$ by looking at $t\leq 0$.
\end{definition}

\begin{prop}\label{propcorofnoautobads}
Suppose $x\in M$ in a line leaf of $\W^c$ satisfies that for every $\epsilon>0$ there exists $\delta>0$ such that $\W^s_\delta(x)\subset \Stable^+_\epsilon(x)$ or $\W^s_\delta(x)\subset \Stable^-_\epsilon(x)$. Then $x$ is a $s$-continuity point for $\rho$. Analogously for $u$-continuity points.
\end{prop}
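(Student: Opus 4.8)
The plan is to show that under the hypothesis, $\rho$ restricted to a stable plaque $\W^s_\delta(x)$ is continuous, by combining the semicontinuity already available (Proposition \ref{proprhosemicont}) with an upper bound on $\limsup_n \rho(y_n)$ obtained from a comparison of center arcs along a stable holonomy. First I would fix $x$ in a line leaf with the stated property, and a sequence $(y_n)\subset \W^s(x)$ with $y_n\to x$; the goal is $\limsup_n\rho(y_n)\le \rho(x)$, since Proposition \ref{proprhosemicont} gives the reverse inequality. Since $\W^c(x)$ is a line, Proposition \ref{propnouautobads2} guarantees an $\epsilon_1>0$ with $\W^c(x)\cap \W^s_{\epsilon_1}(x)=\{x\}$, so near $x$ the foliations $\W^c$ and $\W^s$ inside $\W^{cs}(x)$ form a genuine product box; this is what lets us talk about stable holonomy along the center segment $[x,f(x)]_c$.

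Next I would use the hypothesis: choose $\epsilon>0$ (to be sent to $0$ at the end) and the corresponding $\delta>0$ so that, say, $\W^s_\delta(x)\subset \Stable^+_\epsilon(x)$ (the $\Stable^-$ case being symmetric, with $f^{-1}$ in place of $f$ — here one uses that $f$ preserves the orientation of center leaves, so "forwards" is consistent with the segment $[x,f(x)]_c$). For $y\in \W^s_\delta(x)$ the defining homeomorphism $h_y:\R_{\ge0}\to\R_{\ge0}$ keeps $X^c_{h_y(t)}(y)$ within $\epsilon$ of $X^c_t(x)$ for all $t\ge 0$. The point of this is that the center segment $[y,f(y)]_c$ is then $C^0$-close (uniformly in a neighborhood, by continuity of $f$ and of the center flow) to a segment that stable-holonomy-shadows $[x,f(x)]_c$; more precisely, $f(y)\in \W^s(f(x))$ and both endpoints $y,f(y)$ project by the stable holonomy $\pi^s$ inside the product box to $x,f(x)$, so the $\pi^s$-image of $[y,f(y)]_c$ is a center arc from $x$ to $f(x)$, hence contains $[x,f(x)]_c$. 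Because stable holonomy inside $\W^{cs}(x)$ is continuous and the arcs stay in a fixed compact box, one gets $\length([y,f(y)]_c)\le \length([x,f(x)]_c)+o(1)$ as $y\to x$, where the $o(1)$ depends on $\epsilon$ and the box but can be absorbed; taking $\epsilon\to 0$ along a diagonal argument yields $\limsup_n\rho(y_n)\le\rho(x)$.

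The main obstacle I expect is the control of the length of the holonomic image: a priori the stable holonomy along a long center segment is only a homeomorphism, not bi-Lipschitz, so $\pi^s$ could in principle expand lengths, and the center arc $[y,f(y)]_c$ could be much longer than $[x,f(x)]_c$ even though its endpoints are close. The way around this is that one does not need metric control of $\pi^s$ on all of $\W^s_{\epsilon_1}(x)$, only on the (uniformly bounded, by $\rho(x)+1$ say) arcs in question, and one exploits that $\W^c(x)\cap\W^s_{\epsilon_1}(x)=\{x\}$ together with a compactness argument: the arcs $[y_n,f(y_n)]_c$ lie in the compact set $\W^s_{\epsilon_1}(\W^c_{\rho(x)+2}(x))$, so a subsequence converges in the Hausdorff topology to a compact center arc from $x$ to $f(x)$; by Proposition \ref{propnouautobads2} (no self-accumulation) this limit arc must be exactly $[x,f(x)]_c$, forcing $\length([y_n,f(y_n)]_c)\to \length([x,f(x)]_c)=\rho(x)$ and hence $\rho(y_n)\to\rho(x)$. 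Here one must also check that $\rho(y_n)$ actually equals $\length([y_n,f(y_n)]_c)$ for $n$ large, i.e.\ that this holonomic arc is a shortest center arc from $y_n$ to $f(y_n)$; this follows because it stays in the product box where $\W^c$-arcs are uniquely determined by their endpoints, so there is no competing shorter arc. Finally, the analysis being local and uniform over $\W^s_\delta(x)$ (every point of $\W^s_{\delta/2}(x)$ has the same product-box structure and the same bounded arcs), continuity holds on a whole stable plaque, which is what "$s$-continuity point" means; the $u$-case is verbatim with $\W^u$ and $f^{-1}$.
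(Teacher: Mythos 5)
There is a genuine gap at the heart of your argument: the claim that both endpoints $y$ and $f(y)$ ``project by the stable holonomy $\pi^s$ inside the product box to $x,f(x)$'', so that the $\pi^s$-image of $[y,f(y)]_c$ is a center arc from $x$ to $f(x)$. The projection $\pi^s$ is only defined on the product box around $[x,f(x)]_c$, and whether the center segment $[y,f(y)]_c$ stays in such a box is precisely the point at issue: the discontinuity scenario one must exclude is that $f(y)$, although lying in $\W^s(f(x))$ and ambient-close to $f(x)$, is reached from $y$ only after a long center excursion, i.e.\ $f(y)$ is a \emph{different} intersection point of $\W^c(y)$ with a small stable disc of $f(x)$ than the center-holonomy image $H^c(y)$ of $y$. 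Your later compactness fix presupposes exactly what is missing: the inclusion of the arcs $[y_n,f(y_n)]_c$ in the compact set $\W^s_{\epsilon_1}(\W^c_{\rho(x)+2}(x))$ is unjustified, since a priori $\rho(y_n)$ could blow up and the arcs could leave any fixed neighborhood of $[x,f(x)]_c$ (ruling this out is the content of the proposition, so assuming it is circular). Moreover, the hypothesis $\W^s_\delta(x)\subset \Stable^{\pm}_\epsilon(x)$ is quoted but never actually put to work: shadowing of the center ray of $y$ by the center ray of $x$ says nothing, by itself, about at which center time along $\W^c(y)$ the point $f(y)$ sits, so it does not by itself force $f(y)=H^c(y)$.

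The missing idea is how the shadowing hypothesis interacts with Proposition \ref{propnouautobads2}. The paper's proof runs by contradiction: if $\rho$ were not continuous on a small stable disc, there would be $z_0\in\W^s_{\delta'}(x)$ with $f(z_0)\neq H^c(z_0)$; letting $z_1\in\W^s_\delta(x)$ be the point with $H^c(z_1)=f(z_0)$, one gets \emph{two distinct points $z_0,z_1$ of the same center leaf inside a tiny stable disc of $x$}. The hypothesis $\W^s_\delta(x)\subset\Stable^{+}_{\epsilon/2}(x)$ (or $\Stable^-$) then lets one shadow the center segment of $\W^c(z_0)$ joining $z_0$ to $z_1$ by the center leaf of $x$ itself, producing a point of $\W^c(x)\cap\W^s_\epsilon(x)\setminus\{x\}$ (it cannot equal $x$, since $\W^c(x)$ is a line), contradicting Proposition \ref{propnouautobads2}. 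Your write-up never performs this transfer from the leaf of $z_0$ back to the leaf of $x$, which is where the hypothesis is genuinely needed; without it, the upper bound $\limsup_n\rho(y_n)\leq\rho(x)$ does not follow. (The complementary inequality via Proposition \ref{proprhosemicont}, and the remark that once everything stays in a product box the holonomic arc realizes $\rho$, are fine.)
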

\begin{proof} The rough idea is the following: If we assume that $x$ is not a $s$-continuity point for $\rho$, then for every $\delta>0$ there would exists $z_0$ in $\W^s_\delta(x)$ such that the center leaf through $z_0$ intersects $\W^s_\delta(x)$ twice (in order to get the non-continuity of $\rho$, see Figure \ref{fig2}). If we assume that the center leaf through $x$ follows closely the one through $z_0$ this will give a self-accumulation of $\W^c(x)$ into itself, thus a contradiction with Proposition \ref{propnouautobads2}. 

Let us write this more precisely. Let us consider $r>0$ small enough so that for every pair $y\neq y'$ in $[x,f(x)]_c$ one has that $\W^s_{r}(y)\cap \W^s_{r}(y')=\emptyset$. Note that a priori we can not rule out the case $f(x)=x$. If this is the case, $[x,f(x)]_c$ will simply denote the singleton $\{x\}$. 

Let $r'>0$ be small enough so that one can define $H^c:\W^s_{r'}(x)\to \W^s_{r}(f(x))$ the center holonomy map so that $[z,H^c(z)]_c$ is a center segment in $\W^s_{r}([x,f(x)]_c)$ for every $z\in \W^s_{r'}(x)$. In case $f(x)=x$ then $H$ will denote the identity map and $[z,H(z)]_c$ will be the singleton $\{z\}$ for every $z\in \W^s_{r'}(x)$. One has that $U:= \bigcup_{z\in \W^s_{r'}(x)}(z,H^c(z))_c$ is a foliation box neighborhood for $\W^c$ containing $(x,f(x))_c$ as plaque if $f(x)\neq x$. Otherwise, if $f(x)=x$, let $U$ denote the set $\W^s_{r'}(x)$.

By Proposition \ref{propnouautobads2} there exists $\epsilon>0$ such that $\W^c(x)\cap \W^s_\epsilon(x)=\{x\}$. By hypothesis, there exists $\delta\in (0,r']$ such that $\W^s_\delta(x)\subset \Stable^+_{\epsilon/2}(x)$ or $\W^s_\delta(x)\subset \Stable^-_{\epsilon/2}(x)$. Let us consider $\delta'\in (0,\delta]$ so that $f(\W^s_{\delta'}(x))$ is contained in $H^c(\W^s_\delta(x))$.

In case $H^c(z)=f(z)$ for every $z\in \W^s_{\delta'}(x)$ one obtains that $\rho$ is continuous in restriction to $\W^s_{\delta'}(x)$, showing that $x$ is a $s$-continuity point.

Let us suppose, by contraction, that there exists $z_0$ in $\W^s_{\delta'}(x)$ such that $H^c(z_0)\neq f(z_0)$. Let $z_1$ denote the point in $\W^s_\delta(x)$ such that $H^c(z_1)=f(z_0)$. One has that $z_1$ is a point in $\W^c(z_0)$ different from $z_0$. See Figure \ref{fig2}. 

\begin{figure}[htb]
\def\svgwidth{0.8\textwidth}
\begin{center} 
{\scriptsize
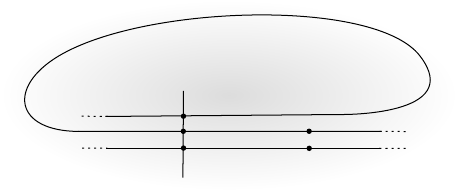
}
\caption{As close as wanted to a point $x$ where the function $z
\mapsto d_c(z,f(z))$ is not continuous one can find a point $z_0$ such that $f(z_0)$ is not given by a center holonomy map along the center curve from $x$ to $f(x)$}\label{fig2} 
\end{center}
\end{figure}
	
Let us suppose, without loss of generality, that $\W^s_\delta(x)\subset \Stable^+_{\epsilon/2}(x)$. It will be clear from the proof that the case $\W^s_\delta(x)\subset \Stable^-_{\epsilon/2}(x)$ is analogous. 
 
In case $z_1$ is equal to $X^c_T(z_0)$ for some $T>0$ there exists $h:[0,T]\to \R$ such that $d_s(X^c(z_0),X^c_{h(t)}(x))\leq \epsilon/2$ for every $t\in [0,T]$. Otherwise, if $z_0$ is equal to $X^c_T(z_1)$ for some $T>0$, then there exists $h:[0,T]\to \R$ such that $d_s(X^c(z_1),X^c_{h(t)}(x))\leq \epsilon/2$ for every $t\in [0,T]$. In either case, $X^c_{h(T)}(x)$ is a point in $\W^s_\epsilon(x)$ which is different from $x$, otherwise $\W^c(x)$ would be a circle leaf. One obtains that $\W^c(x)$ intersects $\W^s_\epsilon(x)\setminus \{x\}$ and this gives us a contradiction.

Analogously for $u$-continuity points.
\end{proof}

Another consequence of the no self-accumulation of centers leaves inside $cs$ or $cu$ leaves is that $X^c_t$ needs to be an expansive flow.

\begin{definition}[See for example \cite{BW72}]
Given a non-singular flow $X_t:X\to X$ in a metric space $X$ and a constant $\epsilon>0$ the flow $X_t$ is said to be \emph{$\epsilon$-expansive} if for every $x,y\in X$ and $h:\R\to \R$ an increasing homeomorphism with $h(0)=0$ satisfying $$d(X_t(x),X_{h(t)}(y))<\epsilon \,\,\,\forall t\in\R$$ one has that $x$ and $y$ lie in a same segment $\eta$ of $X_t$-orbit such that $\eta\subset B_{\epsilon}(x)$.
\end{definition}

\begin{prop}\label{propexpansiveflow} There exists $\epsilon_0>0$ so that the flow $X^c_t$ is $\epsilon_0$-expansive.
\end{prop}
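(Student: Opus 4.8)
The plan is to fix $\epsilon_0>0$ small enough that local product structure holds at scale $C\epsilon_0$, in the sense recalled in the proof of Lemma \ref{lemmasanducontpointtheninX}, and small with respect to the constants implicit in Proposition \ref{propnouautobads2}, and to show directly that this $\epsilon_0$ works. So suppose $x,y\in M$ and $h:\R\to\R$ is an increasing homeomorphism with $h(0)=0$ and $d(X^c_t(x),X^c_{h(t)}(y))<\epsilon_0$ for every $t\in\R$; in particular $d(x,y)<\epsilon_0$. The hypothesis says that the whole leaf $\W^c(x)=\{X^c_t(x):t\in\R\}$ $\epsilon_0$-fellow-travels $\W^c(y)$ and conversely, so by local product structure, for every $p\in\W^c(x)$ there is an unstable arc of length $\le C\epsilon_0$ from $p$ to $\W^{cs}(y)$ and a stable arc of length $\le C\epsilon_0$ from $p$ to $\W^{cu}(y)$.

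The first and main step is to prove that $\W^{cs}(x)=\W^{cs}(y)$ and $\W^{cu}(x)=\W^{cu}(y)$, by an expansion--trapping argument. Since $f$ is center fixing, $f(y)\in\W^c(y)\subset\W^{cs}(y)$, so $f$ preserves the leaf $\W^{cs}(y)$ (and likewise $\W^{cu}(y)$), as it also preserves $\W^c(x)$. For $p\in\W^c(x)$ let $g(p)\in[0,C\epsilon_0]$ be the infimum of the lengths of unstable arcs joining $p$ to a point of $\W^{cs}(y)$. Given a nearly minimizing unstable arc from $f^\ell(p)$ to some $z\in\W^{cs}(y)$ (with $\ell$ as in the definition of partial hyperbolicity, so $f^{-\ell}$ contracts the length of unstable arcs by a factor $<1/2$), its image under $f^{-\ell}$ is an unstable arc from $p=f^{-\ell}(f^\ell(p))$ --- here we use that $\W^c(x)$ and $\W^{cs}(y)$ are $f$-invariant --- to $f^{-\ell}(z)\in\W^{cs}(y)$ of length less than $\tfrac12 g(f^\ell(p))$ plus an arbitrarily small error. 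Hence $g(p)\le\tfrac12 g(f^\ell(p))$, i.e. $g(f^{\ell n}(p))\ge 2^n g(p)$ for all $n\ge0$; since $g\le C\epsilon_0$ on all of $\W^c(x)$ this forces $g(p)=0$, i.e. $p\in\W^{cs}(y)$. Thus $\W^c(x)\subset\W^{cs}(y)$, and symmetrically $\W^c(x)\subset\W^{cu}(y)$, so $\W^{cs}(x)=\W^{cs}(y)$ and $\W^{cu}(x)=\W^{cu}(y)$; in particular $\W^c(x)$ and $\W^c(y)$ are now both connected components of $\W^{cs}(y)\cap\W^{cu}(y)$.

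The second step is to deduce $\W^c(x)=\W^c(y)$ and then that $x$ and $y$ lie on a short center segment. Suppose $\W^c(x)\neq\W^c(y)$. Running the argument of the first step with the single leaf $\W^c(y)$ in place of $\W^{cs}(y)$, the stable distance from a point $p\in\W^c(x)$ to the $f$-invariant leaf $\W^c(y)$ is divided by at least $2$ under $f^\ell$; thus if $f|_{\W^c(x)}$ had a fixed point that point would lie on $\W^c(y)$, forcing $\W^c(x)=\W^c(y)$ at once, and otherwise, iterating along a fundamental domain of $f^\ell|_{\W^c(x)}$ (whose structure is controlled by Lemma \ref{lemmafinitelineswithfixedpointsandbounded}) forces $\W^c(y)$ to accumulate onto $\W^c(x)$ inside the leaf $\W^{cs}(y)$; a local stable holonomy inside $\W^{cs}(y)$ then yields, on the single leaf $\W^c(y)$ (or on $\W^c(x)$), two distinct points arbitrarily close to one another inside a local stable set --- contradicting Proposition \ref{propnouautobads2} applied to one of them. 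When either leaf is compact one reaches the contradiction directly via Lemma \ref{lemmatwocircleleaves}. Finally, with $\W^c(x)=\W^c(y)=:W$ and $d(x,y)<\epsilon_0$, local product structure together with one more application of Proposition \ref{propnouautobads2} shows that the center segment $[x,y]_c$ in $W$ is short (any other relative position of $y$ inside $W$ with respect to the product box at $x$ would again force $W$ to meet its own local stable or local unstable set away from the base point); this exhibits the orbit segment $\eta$ required for $\epsilon_0$-expansiveness.

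The step I expect to be the main obstacle is the second one. The expansion--trapping argument only places $\W^c(x)$ and $\W^c(y)$ in a common center--stable and a common center--unstable leaf, whereas Proposition \ref{propnouautobads2} concerns a single center leaf and \emph{its own} local stable and unstable sets; converting ``two distinct fellow-travelling center leaves inside one $\W^{cs}$-leaf and one $\W^{cu}$-leaf'' into an honest self-accumulation of a single center leaf requires genuine care, especially for line leaves of $\W^c$ carrying no fixed point of $f$, where there is no distinguished point at which to pin the contraction and one must argue via fundamental domains and compactness of $M$. Ensuring that a single constant $\epsilon_0$ works for all pairs $x,y$ --- i.e. uniformity of the local product structure and of the bounds coming from Proposition \ref{propnouautobads2} --- is the accompanying technical point.
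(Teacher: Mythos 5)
Your step 2 contains the decisive gap, and the contradiction you invoke there does not actually follow. Proposition \ref{propnouautobads2} is a pointwise statement: for each point $q$ it provides some $\epsilon(q)>0$, with no uniformity in $q$. So producing ``two distinct points of a single center leaf arbitrarily close to one another inside a local stable set'' contradicts nothing: the two points move around, and the $\epsilon$ guaranteed at whichever of them you apply the proposition to may be far smaller than their mutual distance. What your expansion--trapping argument genuinely yields is that the \emph{other} leaf $\W^c(y)$ accumulates on a point $p\in\W^c(x)$ along $\W^s(p)$ (or $\W^u(p)$), which is not self-accumulation of $\W^c(p)$ at $p$; and a uniform version of Proposition \ref{propnouautobads2} that would rescue your conclusion is essentially the expansiveness statement you are trying to prove, so invoking it would be circular. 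This is exactly the point where the paper does real work: after reducing (via the product structure) to a point $y_u\in\W^u_{C^2\epsilon_0}(x)$ whose center leaf fellow-travels $\W^c(x)$ inside $\W^{cu}(x)$, it uses times $t_n$ with $X^c_{t_n}(x)=f^n(x)$ and $f^n(y)=X^c_{h(t'_n)}(y)$ to build \emph{two} sequences, $z_n\in\W^c(y)\cap\W^u_{C^2\epsilon_0}(x)\setminus\{x\}\to x$ and $w_n\in\W^c(x)\cap\W^u_{C^2\epsilon_0}(y)\setminus\{y\}\to y$, and then transports the center segments $[y,z_n]_c$ to nearby center segments $[w_{k_n},w_n']_c$ inside $\W^c(x)$ with $w_n'\in\W^u_{\delta_n}(z_n)$; the points $w_n'$ lie on the single leaf $\W^c(x)$, are distinct from $x$, and converge to $x$ inside $\W^u_{\mathrm{loc}}(x)$, so Proposition \ref{propnouautobads2} applies at the fixed point $x$. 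No fixed points or fundamental domains of $f|_{\W^c(x)}$ are needed. You flagged this conversion as the main obstacle, but flagging it is not closing it; as written the proof is missing its key step.

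There is also a smaller gap in step 1: from $g(p)\le\tfrac12 g(f^\ell(p))$ and $g\le C\epsilon_0$ along $\W^c(x)$ you correctly get $g\equiv 0$, but ``$g(p)=0$, i.e.\ $p\in\W^{cs}(y)$'' is not justified. Leaves are only immersed; the infimum need not be attained, and $g(p)=0$ only says that $\W^{cs}(y)$ accumulates on $p$ along $\W^u(p)$ (think of one $cs$-leaf spiralling onto another), which does not give membership, and hence does not give $\W^{cs}(x)=\W^{cs}(y)$ or that $\W^c(x),\W^c(y)$ are components of the same intersection. The paper avoids this issue entirely by never trying to prove leaf equality first: it replaces $y$ by the product-structure projections $y_{cs}=\W^u_{C\epsilon_0}(y)\cap\W^{cs}_{C\epsilon_0}(x)$ and $y_{cu}=\W^s_{C\epsilon_0}(y)\cap\W^{cu}_{C\epsilon_0}(x)$, which lie in $\W^{cs}(x)$ and $\W^{cu}(x)$ \emph{by construction}, proves expansiveness intrinsically in $cs$- and $cu$-leaves, and only then recovers the conclusion for $y$ itself. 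Restructuring your argument along those lines would both repair step 1 and put you in position to run the two-sequence argument that step 2 actually requires.
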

\begin{proof}
Let $\epsilon_0>0$ be a small constant so that at scale $\epsilon_0$ one has local product structure for the invariant foliations of $f$. More precisely, let $\epsilon_0>0$ be small enough so that there exists $C>0$ such that for every $x,y\in M$ satisfying $d(x,y)<\epsilon_0$ one has that $\W_{C\epsilon_0}^s(x)$ intersects $\W_{C\epsilon_0}^{cu}(y)$, and $\W_{C\epsilon_0}^u(x)$ intersects $\W_{C\epsilon_0}^{cs}(y)$, and each of these intersection points is unique. And, moreover, small enough so that if $d_{cs}(x,y)<C\epsilon_0$ or $d_{cu}(x,y)<C\epsilon_0$, then $\W_{C^2\epsilon_0}^c(x)$ intersects $\W_{C^2\epsilon_0}^s(y)$ or $\W_{C^2\epsilon_0}^c(x)$ intersects $\W_{C^2\epsilon_0}^u(y)$, respectively, and this intersection points are also unique. 

We claim first that, by dynamical coherence, it is enough to show that $X^c_t$ is $C\epsilon_0$-expansive in restriction to leaves of $\W^{cu}$ and $\W^{cs}$ for their intrinsic topology. 

Let us show the claim. Assume that $X^c_t$ is $C\epsilon_0$-expansive in restriction to leaves of $\W^{cu}$ and $\W^{cs}$ and suppose that $x$ and $y$ are two points in $M$ such that there exists $h:\R\to \R$ an increasing homeomorphism satisfying $h(0)=0$ and $d(X^c_t(x),X^c_{h(t)}(y))<\epsilon_0$  for every $t\in \R$. It follows that $\W^u_{C\epsilon_0}(X^c_{h(t)}(y))$ must intersect $\W^{cs}_{C\epsilon_0}(X^c_t(x))$ for every $t\in\R$, and this intersection point is unique. 

Let $y_{cs}:=\W^u_{C\epsilon_0}(y)\cap \W^{cs}_{C\epsilon_0}(x)$. It follows that there exists an increasing homeomorphisms $h_{cs}:\R\to \R$  satisfying that $h_{cs}(0)=0$ and $d_{cs}(X^c_t(x),X^c_{h_{cs}(t)}(y_{cs}))<C\epsilon_0$ for every $t\in \R$, where $d_{cs}$ denotes the intrinsic distance inside leaves of $\W^{cs}$ and $X^c_{h_{cs}(t)}(y_{cs})$ is  the intersection of $\W^u_{C\epsilon_0}(X^c_{h(t)}(y))$ and $\W^{cs}_{C\epsilon_0}(X^c_t(x))$ for every $t\in \R$.

Since $X^c_t$ is $C\epsilon_0$-expansive inside leaves of $\W^{cs}$ it follows that $y_{cs}$ must lie in a local piece of $X^c_t$-orbit of $x$. Analogously, using that $X^c_t$ is $C\epsilon_0$-expansive inside leaves of $\W^{cu}$ it follows that the point $y_{cu}:=\W^s_{C\epsilon_0}(y)\cap \W^{cu}_{C\epsilon_0}(x)$ lies also in a local piece of $X^c_t$-orbit of $x$. As a consequence of both facts, $y$ itself must lie in a local piece of $X^c_t$-orbit of $x$. This shows the claim.

Let us see that $X^c_t$ is $C\epsilon_0$-expansive inside $\W^{cu}$ leaves. For $\W^{cs}$ leaves the reasoning is analogous. Consider $x\in M$,  $y\in \W^{cu}(x)$ and $h:\R\to \R$ an increasing homeomorphism such that $h(0)=0$ and $d_{cu}(X^c_t(x),X^c_{h(t)}(y))<C\epsilon_0$ for every $t\in \R$. Let $y_u$ denote the intersection of $\W^c_{C^2\epsilon_0}(y)$ with $\W^u_{C^2\epsilon_0}(x)$. There exists $h_u:\R\to \R$ an increasing homeomorphism satisfying that $h_u(0)=0$ and $X^c_{h_u(t)}(y_u)$ equal to the intersection of $\W^c_{C^2\epsilon_0}(X^c_{h(t)}(y))$ with $\W^u_{C^2\epsilon_0}(X^c_t(x))$ for every $t\in \R$. If we show that $y_u=x$ then we show that $y$ lies in a local piece of the $X^c_t$-orbit of $x$.

For simplicity in the notation, let us rename $y=y_u$ and $h=h_u$. Suppose by contradiction that $x\neq y$. As a consequence, the point $X^c_{h(t)}(y)$ lies in $\W^u_{C^2\epsilon_0}(X^c_t(x))\setminus X^c_t(x)$ for every $t\in \R$. 

Let $N>0$ be such that $f^{-N}$ contracts distances inside $\W^u$ leaves. For every $n\geq N$ there exists $t_n\in \R$ so that $X^c_{t_n}(x)=f^n(x)$. One obtains that $z_n:=f^{-n}(X^c_{h(t_n)}(y))$ is a sequence in $\W^c(y)\cap \W^u_{C^2\epsilon_0}(x)\setminus x$ converging to $x$. Analogously, there exists $t_n'\in \R$ so that $f^n(y)=X^c_{h(t_n')}(y)$ for every $n\geq N$ and then $w_n:=f^{-n}(X^c_{t_n'}(x))$ is a sequence in $\W^c(x)\cap \W^u_{C^2\epsilon_0}(y)\setminus y$ converging to $y$.

Let $\delta_n>0$ be such that $\W^u_{\delta_n}(z_n)$ is a subset of $\W^u_{C^2\epsilon_0}(x)\setminus x$ for every $n\geq N$. Consider a compact center segment $[y,z_n]_c$ in $\W^c(y)$ joining $y$ with $z_n$.  For each $n$, by taking $w_{k_n}$ close enough to $y$ one can construct a center segment $[w_{k_n},w_n']_c$ close enough to $[y,z_n]_c$ so that $w_n'$ lies in $\W^u_{\delta_n}(z_n)$. 

Since $z_n$ converges to $x$ then $w_n'$ converges to $x$ as well. Moreover, $w_n'$ lies in $\W^c(x)\cap \W^u_{C^2\epsilon_0}(x)$ for every $n$ and is different from $x$ since $\W^u_{\delta_n}(z_n)$ is disjoint from $\{x\}$. One obtains that $x$ is a $u$-recurrent point. By Proposition \ref{propnouautobads2} this gives us a contradiction.
\end{proof}

Recall that $Y\subset M$ is the set of continuity points for $\rho$ whose center leaf is a line.

\begin{lemma}\label{lemmanofixedpointsinY}
The are no fixed points in $Y$.
\end{lemma}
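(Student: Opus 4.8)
The plan is to argue by contradiction: assume $x\in Y$ is a fixed point of $f$. Since $f(x)=x$ and $\W^c(x)$ is a line we have $\rho(x)=d_c(x,x)=0$, and since $x\in Y\subset X$ the function $\rho$ is continuous at $x$, so $\rho(z)\to 0$ as $z\to x$. The rough idea is to combine this vanishing of $\rho$ near $x$ with the uniform expansion of $\W^u$ at the fixed point $x$: iterating a nearby unstable point backwards produces an infinitely long piece of \emph{some} center leaf that is trapped inside an arbitrarily small neighbourhood of $x$, which is impossible in a foliation box for $\W^c$.

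Concretely, I would first fix $\epsilon>0$ as in Proposition~\ref{propnouautobads2}, so $\W^c(x)\cap\W^u_\epsilon(x)=\{x\}$, and fix a foliation box $\mathcal{B}$ for $\W^c$ around $x$, all of whose plaques have length at most some $\Lambda$. Then pick $w_0\in\W^u_r(x)\setminus\{x\}$ with $r<\epsilon$ (we use the unstable foliation, the stable one being symmetric). Because $f(x)=x$, the leaf $\W^u(x)$ is $f$-invariant, so $f^{-k}(w_0)\in\W^u(x)$ for all $k$, and the uniform expansion of $\W^u$ makes $f^{-k}(w_0)\to x$ inside $\W^u(x)$; in particular these points are pairwise distinct and converge to $x$ in $M$. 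Since $f$ fixes every center leaf, $f^{-k}(w_0)\in\W^c(w_0)$ for all $k$. Moreover $x\notin\W^c(w_0)$: otherwise $\W^c(w_0)=\W^c(x)$ and $w_0\in\W^c(x)\cap\W^u_r(x)=\{x\}$, contradicting $w_0\neq x$; in particular $\W^c(w_0)$ is not a circle leaf (a compact leaf is closed and would contain $\lim_k f^{-k}(w_0)=x$), hence it is a line.

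Next I would feed in the continuity of $\rho$ at $x$. Setting $\sigma_k:=[\,f^{-k}(w_0),\,f^{-(k-1)}(w_0)\,]_c\subset\W^c(w_0)$, on this line leaf one has $\length(\sigma_k)=d_c\big(f^{-k}(w_0),f(f^{-k}(w_0))\big)=\rho(f^{-k}(w_0))\to 0$, and also $f^{-k}(w_0)\to x$; hence $\sigma_k\subset\mathcal{B}$ for all $k\geq K$ with $K$ large. Now $\bigcup_{k\geq K}\sigma_k$ is connected — consecutive $\sigma_k$ share the point $f^{-k}(w_0)$ — and is contained in $\W^c(w_0)\cap\mathcal{B}$, hence lies in a single plaque of $\mathcal{B}$; the closure of that plaque is a compact arc of $\W^c(w_0)$ containing every $f^{-k}(w_0)$ with $k\geq K$. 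Taking a subsequence of $(f^{-k}(w_0))_k$ that converges in this compact arc, and using that convergence in the intrinsic topology of a leaf implies convergence in $M$, its limit is $x$; so $x\in\W^c(w_0)$, a contradiction.

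The only place where the hypothesis $x\in Y$ is really used — and the crux of the argument — is the estimate $\length(\sigma_k)=\rho(f^{-k}(w_0))\to 0$, which comes from $\rho$ being continuous at $x$ with $\rho(x)=0$. Without it, nothing prevents $\W^c(w_0)$ from wandering away from $x$ between the returns $f^{-k}(w_0)$, and no contradiction arises; with it, one is forced to have an infinitely long connected piece of a single leaf inside one foliation-box plaque, which cannot happen. Everything else — the $f$-invariance of $\W^u(x)$ and of center leaves, and the uniform expansion of $\W^u$ — is routine.
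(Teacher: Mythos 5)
Your proof is correct and is essentially the paper's own argument: the continuity of $\rho$ at the fixed point (where $\rho=0$) traps the orbit segments of a nearby invariant-manifold point inside a single center plaque of a foliation box, and the convergence of those iterates to $x$ then produces the contradiction. The only cosmetic differences are that you iterate backwards along $\W^u(x)$ where the paper iterates forwards along $\W^s(x)$, and that you close the argument by explicitly invoking Proposition~\ref{propnouautobads2}, which the paper leaves implicit in its final plaque contradiction.
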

\begin{proof}
Suppose by contradiction that there exists $x\in Y$ such that $f(x)=x$. Let $U$ be a foliation box neighborhood for $\W^c$ containing $x$. Since $\rho(x)=0$ and $\rho$ is continuous at $x$ there exists $\epsilon>0$ such that for every $y$ satisfying $d(x,y)<\epsilon$ one has that $y$ and $f(y)$ lie in the same center plaque of $U$. This contradicts the fact that there exists $y$ in $\W^s_\epsilon(x)$ such that $d_s(x,f^n(y))<\epsilon$ for every $n\geq 0$ and $\lim_n f^n(y)=x$.
\end{proof}

As a consequence of the above lemma the set $Y$ can be decomposed as the disjoint union of the following two sets, $Y^+$ and $Y^-$.

\begin{definition}
We define $$Y^+=\{x\in Y \mid f(x)\text{ and } X^c_1(x) \text{ lie in the same c.c. of } \W^c(x)\setminus x\}$$ and $$Y^-=\{x\in Y \mid f(x)\text{ and } X^c_1(x) \text{ lie in a different c.c. of } \W^c(x)\setminus x\}.$$
\end{definition}

In the above definition `c.c.' stands for `connected component'.

\begin{definition}
For every $x\in M$ let us denote $\alpha^c(x)$ and $\omega^c(x)$ the alpha and omega limit of the point $x$ for the center flow $X^c_t$, respectively. That is, $\alpha^c(x)=\{y\in M \mid \exists \text{ }(t_n)_n \text{ so that } \lim_n t_n=-\infty \text{ and } \lim_n X^c_{t_n}(x)=y\}$. Analogously for $\omega^c(x)$ when $\lim_n t_n=+\infty$.
\end{definition}

The next proposition allows us to extend $s$ and $u$ continuity of $\rho$ beyond the set $Y$.

\begin{prop}\label{propalphaomega_c}

For every $x^+\in Y^+$ and $x^-\in Y^-$ one has that:
\begin{itemize}
\item Every point in $\alpha^c(x^+)\cup\omega^c(x^-)$ is a $s$-continuity point for $\rho$.
\item Every point in $\alpha^c(x^-)\cup\omega^c(x^+)$ is a $u$-continuity point for $\rho$.
\end{itemize} 
\end{prop}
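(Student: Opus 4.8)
The plan is to reduce the statement, through Proposition \ref{propcorofnoautobads}, to a shadowing property for the center flow, and then to produce that property at points of $\alpha^c(x^+)$ by transporting the control one has on $\rho$ along the leaf $\W^c(x^+)$. By the symmetries of the setting — time-reversal of $X^c_t$ exchanges $\alpha^c\leftrightarrow\omega^c$ and $Y^+\leftrightarrow Y^-$, and passing from $f$ to $f^{-1}$ exchanges $\W^s\leftrightarrow\W^u$ and $Y^+\leftrightarrow Y^-$ and matches $s$-continuity of the displacement of $f$ with $u$-continuity of that of $f^{-1}$ (using that the set $\alpha^c(x^+)$ is $f$-invariant) — it suffices to prove that every $y\in\alpha^c(x^+)$ with $x^+\in Y^+$ is an $s$-continuity point of $\rho$. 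Fix such a $y$ and times $t_k\to-\infty$ with $x_k:=X^c_{t_k}(x^+)\to y$. Treating the case where $\W^c(y)$ is a line (the one that matters in the rest of the text; a circle leaf is handled by a parallel, simpler argument), Proposition \ref{propcorofnoautobads} reduces the task to finding, for each $\epsilon>0$, a $\delta>0$ with $\W^s_\delta(y)\subset\Stable^+_\epsilon(y)$ or $\W^s_\delta(y)\subset\Stable^-_\epsilon(y)$.

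The main step is to obtain such an inclusion uniformly along the whole leaf $\W^c(x^+)$. Observe first that $\W^c(x^+)\subset Y^+$: by Proposition \ref{propYresidualWcsatpathconn} the set $Y$ is $\W^c$-saturated, by Lemma \ref{lemmanofixedpointsinY} it contains no fixed point of $f$, so $f$ restricted to the line $\W^c(x^+)$ is a fixed-point-free orientation-preserving homeomorphism whose displacement has constant sign, necessarily positive since $x^+\in Y^+$. Now fix a small $\epsilon>0$ and the compact fundamental domain $[x^+,f(x^+)]_c$. Using that $\rho$ is continuous on a whole ball around each point of this segment (Proposition \ref{propYresidualWcsatpathconn}) and that $f$ uniformly contracts $\W^s$, one produces $\delta_0>0$ and $\epsilon_1\in(0,\epsilon/2)$ such that for every $x'$ in the segment and every $z\in\W^s_{\delta_0}(x')$ one has, for all $n\geq0$, $f^n(z)\in\W^s_{\delta_0}(f^n(x'))$ and the center arc $[f^n(z),f^{n+1}(z)]_c$ stays $\epsilon_1$-close to $[f^n(x'),f^{n+1}(x')]_c$. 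Since $x^+\in Y^+$ the points $f^n(x')$ march monotonically to $+\infty$ along $\W^c(x^+)$; concatenating these arcs over $n\geq0$ and using $f$-invariance of $\W^s$ yields $\W^s_{\delta_0}(x')\subset\Stable^+_{\epsilon_1}(x')$ for every $x'\in\W^c(x^+)$, with $\delta_0$ uniform on compact pieces of the leaf.

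Finally let $x_k\to y$. By continuity of the foliations $\W^s$ and $\W^c$ the discs $\W^s_{\delta_0}(x_k)$ and the forward center-orbit segments issuing from them converge to $\W^s_{\delta_0}(y)$ and to the forward center-orbit segments issuing from it; combined with $\W^s_{\delta_0}(x_k)\subset\Stable^+_{\epsilon_1}(x_k)$ and with the $\epsilon_0$-expansivity of $X^c_t$ from Proposition \ref{propexpansiveflow} (used with $\epsilon_0>2\epsilon_1$), which forbids the shadowing orbits from limiting onto a different center orbit and keeps the reparametrizing homeomorphisms controlled, this gives $\W^s_{\delta_0}(y)\subset\Stable^+_{\epsilon}(y)$. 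Proposition \ref{propcorofnoautobads} then shows that $y$ is an $s$-continuity point.

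I expect the genuine difficulty to be the two uniformity points in the last two paragraphs: keeping the radius $\delta_0$ of the stable disc on which forward shadowing holds bounded away from $0$ as $x_k\to y$ — here one really needs that $f$ marches \emph{monotonically} forward along $\W^c(x^+)$, mere continuity of $\rho$ at $x^+$ being insufficient since $\rho$ may be unbounded along the leaf — and ensuring the reparametrizing homeomorphisms $\R_{\geq0}\to\R_{\geq0}$ survive the limit, which is precisely where expansivity of $X^c_t$ enters; I would reduce both to uniform hyperbolicity of $f$, continuity of the invariant foliations, and Propositions \ref{propnouautobads2} and \ref{propexpansiveflow}.
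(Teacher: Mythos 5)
Your overall strategy matches the paper's: reduce via Proposition \ref{propcorofnoautobads} to producing stable discs of uniform radius inside $\Stable^+_\epsilon$, build the forward shadowing by matching the arcs $[f^n(z),f^{n+1}(z)]_c$ with $[f^n(x'),f^{n+1}(x')]_c$ using continuity of $\rho$ near the fundamental domain $[x^+,f(x^+)]_c$ and forward contraction of $\W^s$, then pass to the limit at $y\in\alpha^c(x^+)$. But there is a genuine gap exactly at the point you yourself flag as ``the genuine difficulty''. Your second paragraph only yields $\W^s_{\delta_0}(x')\subset\Stable^+_{\epsilon_1}(x')$ with $\delta_0$ uniform on \emph{compact} pieces of $\W^c(x^+)$, whereas the points $x_k=X^c_{t_k}(x^+)$ converging to $y$ have $t_k\to-\infty$ and so escape every compact piece of the leaf. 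For $z$ far behind the fundamental domain, a point $w\in\W^s_{\delta_0}(z)$ is reached from the ladder only as an $f^{-n}$-image of a point stable-close to the fundamental domain, and $f^{-n}$ expands stable distances; equivalently, the size of the neighborhood on which continuity of $\rho$ at the points of the backward ray controls the matching of center arcs is not uniform along this non-compact ray (indeed $\rho$ may be unbounded along it, as you note). Hence the input to your final paragraph, namely $\W^s_{\delta_0}(x_k)\subset\Stable^+_{\epsilon_1}(x_k)$ with a single $\delta_0$ for all $k$, is precisely what has not been proved, and it is the heart of the proposition.

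The paper closes this gap with an argument you do not supply: for each $z\in(-\infty,x^+]_c$ it considers the critical radius $\eta_z$, the supremum of those $\eta$ for which the shadowing from $\W^s_\eta(z)$ stays within $\epsilon$ for \emph{all} $t\geq 0$ (your ladder estimate only gives the bound $\epsilon$ after the shadowing has passed the fundamental domain, with a uniform radius $\delta'$), and proves $\inf_z\eta_z>0$ by contradiction: if $\eta_{z_n}\to 0$, one picks $w_n$ with $d_s(z_n,w_n)=\eta_{z_n}$ for which the shadowing distance attains the value $\epsilon$ at some time $T_n\to+\infty$, recenters the picture at that time, and extracts in the limit two distinct points $p\neq q$ on a common stable leaf with $d_s(p,q)=\epsilon$ whose full orbits $\epsilon$-shadow each other for all $t\in\R$, contradicting Proposition \ref{propexpansiveflow}. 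Note that expansivity is used there, \emph{before} any limit toward $y$, to obtain the uniform radius along the backward ray; in your write-up it is invoked only at the final limit at $y$, where it cannot create the missing uniformity (and once the uniform radius is known, the passage to the limit is immediate since the defining condition of $\Stable^+_\epsilon$ is closed under limits). Saying you ``would reduce'' the uniformity to hyperbolicity and Propositions \ref{propnouautobads2} and \ref{propexpansiveflow} names the right ingredients but omits the actual argument, which is the substantive content of the proof.
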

\begin{proof}
Suppose $x^+\in Y^+$ and $y\in \alpha^c(x^+)$. Let us see that $y$ is a $s$-continuity point for $\rho$. The other cases are analogous. For simplicity, let us denote $x^+$ by $x$.

By Proposition \ref{propcorofnoautobads} it is enough to show that for every $\epsilon>0$ there exists $\delta>0$ such that $\W^s_\delta(y)\subset \Stable^+_\epsilon(y)$. Without loss of generality, it is enough to show this for every $\epsilon$ smaller than $\epsilon_0$ the constant from Proposition \ref{propexpansiveflow}.

Suppose from now on $\epsilon>0$ smaller than $\epsilon_0$. The proof is carried out in a series of claims.

\begin{claim1}
There exists $\delta'>0$ so that for every $z\in (-\infty,x]_c$ and $w\in \W^s_{\delta'}(z)$ there exists a homeomorphism $h_w:[0,+\infty)\to [0,+\infty)$ satisfying that $X^c_{h_w(t)}(w)$ lies in $\W^s(X^c_t(z))$ for every $t\geq 0$,
\begin{equation}\label{eq2}
\lim_t \sup_{w\in \W^s_{\delta'}(z)} d_s(X^c_t(z),X^c_{h_w(t)}(w))=0
\end{equation}
and
\begin{equation}\label{eq1}
 \sup_{w\in \W^s_{\delta'}(z)} d_s(X^c_t(z),X^c_{h_w(t)}(w))\leq \epsilon 
\end{equation}
for every $t\geq T_z$, where $T_z\geq 0$ denotes the time such that $X^c_{T_z}(z)=x$. 
\end{claim1}

\begin{proof}\let\qed\relax To show this claim, let us look more closely at the behavior of center leaves at $s$-continuity points. Since $x$ is a $s$-continuity point for $\rho$, there exists $r>0$ such that $\rho$ is continuous in restriction to the stable disc $\W^s_r(x)$. Since $\W^c(x)$ is a line, by Lemma \ref{lemmatwocircleleaves} one can consider $r>0$ small enough so that every center leaf through $\W^s_r(x)$ is also a line.

The same argument for showing that $Y$ is saturated by center leaves in the proof of Proposition \ref{propYresidualWcsatpathconn} shows that every point in a center leaf through $\W^s_r(x)$ is a $s$-continuity point for $\rho$. And the same argument for showing Lemma \ref{lemmanofixedpointsinY} shows there are no fixed points of $f$ in any center leaf through $\W^s_r(x)$. In particular, for every $w\in \W^s_r(x)$ the half-open center segment $[w,f(w))_c$ is a fundamental domain for the dynamics of $f$ restricted $\W^c(w)$.

The continuity of $\rho$ in restriction to $\W^s_r(x)$ means that the center segments $[w,f(w)]_c$ vary continuously in the Hausdorff topology as $w$ varies in $\W^s_r(x)$. One can consider $r>0$ small enough so that $\bigcup_{w\in \W^s_{r}(x)}[w,f(w)]_c$ is a foliation box neighborhood of the foliation $\W^c$. 

Moreover, there exist $r''> r'>0$ so that $\bigcup_{w\in \W^s_{r}(x)}[w,f(w)]_c$ contains the set $\W^s_{r'}([x,f(x)]_c)$ and is contained in $\W^s_{r''}([x,f(x)]_c)$. Modulo taking $r>0$ smaller, the constant $r''>0$ can be considered small enough so that $\W^s_{r''}(z)$ and $\W^s_{r''}(z')$ are disjoint discs for every pair $z,z'\in [x,f(x)]_c$ such that $z\neq z'$.

It is immediate that in $\bigcup_{w\in \W^s_{r}(x)}[w,f(w))_c$, if $w$ is a point in  $\W^s_r(z)$, then there exists a unique curve $\gamma_w$ so that $\gamma_w(t)=\W^s_{r''}(X^c_t(x))\cap [w,f(w))_c$ for every $t$ such that $X^c_t(x)$ lies in $[x,f(x))_c$. This can be extended naturally to every $t\in\R$ as follows. If $X^c_t(x)$ lies in $[f^n(x),f^{n+1}(x))_c$ for some $n\in \mathbb{Z}$, and $x_t$ denotes the point in $[x,f(x))_c$ such that $f^n(x_t)=X^c_t(x)$, then $\gamma_w(t)$ is equal to the intersection of $f^n(\W^s_{r''}(x_t))$ and $[f^n(w),f^{n+1}(w))_c$. The fact that $\rho$ is continuous in restriction to $\W^s_r(x)$ translates to the fact that for every $w\in \W^s_r(x)$ the curve $\gamma_w$ is continuous for every $t\in \R$.

From the above, it follows that for every $T\in \R$ and $w\in \W^s_r(x)$ the points $z:=X^c_T(x)$ and $w':=\gamma_w(T)$ satisfy that there exists a unique homeomorphism $h_{w'}:[0,+\infty)\to [0,+\infty)$ such that $X^c_{h_{w'}(t)}(w')$ lies in $\W^s(z)$ for every $t\in \R$. Namely, $h_{w'}(t)$ is such that $X^c_{h_{w'}(t)}(w')=\gamma_w(T+t)$ for every $t\in \R$.

Again, modulo taking $r>0$ smaller beforehand, let us consider $r',r''>0 $ small enough so that $f^n(\W^s_{r''}(p))$ is included in $\W^s_\epsilon(f^n(p))$ for every $p\in M$ and $n\geq 0$. And such that for some constant $\delta'>0$ one has that $\W^s_{\delta'}(f^{-n}(p))$ is contained in $f^{-n}(\W^s_{r'}(p))$ for every $p\in M$ and $n\geq 0$. It follows that (\ref{eq2}) and  (\ref{eq1}) are immediately satisfied for every $z\in (-\infty,x]_c$ and $w\in \W^s_{\delta'}(z)$. This proves the claim.
\end{proof}

Note that, by the continuity of $X^c_t$, for every $z\in (-\infty,x]_c$ there exists $\eta>0$ smaller or equal to $\delta'$ so that, if one considers $\W^s_\eta(z)$ in the place of $\W^s_{\delta'}(z)$, then (\ref{eq1}) is satisfied for every $t\geq 0$ instead of for every $t\geq T_z$. Let $\eta_z>0$ denote the supremum of the constants $\eta>0$ satisfying this. It immediate that $\W^s_{\eta_z}(z)$ is a subset of $\Stable^+_\epsilon(z)$. Note also that, by definition, $\eta_z\leq \delta'$. 

\begin{claim2}
If $\eta_z<\delta'$ for some $z\in (-\infty,x]_c$, then for some $w\in \W^s_{\delta'}(z)$ satisfying  $d_s(z,w)=\eta_z$ one has that 
\begin{equation}\label{eq3} d_s(X^c_t(z),X^c_{h_{w}(t)})\leq \epsilon
\end{equation}
for every $t\geq 0$ and that 
\begin{equation}\label{eq4}
d_s(X^c_{T}(z),X^c_{h_{w}(T)})= \epsilon
\end{equation} for some $T\geq 0$.
\end{claim2}

\begin{proof}\let\qed\relax Indeed, on the one hand, if it were the case that for some $w\in \W^s_{\delta'}(z)$ so that $d_s(z,w)=\eta_z$ one has that (\ref{eq3}) is not satisfied for every $t\geq 0$ then by the continuity of $X^c_t$ one would have that the same happens to every $w$ in $\W^s_{\delta'}(z)$ close enough to $w$, contradicting that $\eta_z$ is a supremum since a lower upper bound would exist. This shows that inequality (\ref{eq3}) needs to be satisfied for every $w\in \W^s_{\delta'}(z)$ such that $d_s(z,w)=\eta_z$.

On the other hand, by (\ref{eq2}) there exists $T'\geq 0$ so that for every $w\in \W^s_{\delta'}(z)$ and $t\geq T'$ one has that $d_s(X^c_t(z),X^c_{h_w(t)}(w))<\epsilon/2$. In case there exists $w\in \W^s_{\delta'}(z)$ with $d_s(z,w)=\eta_z$ and satisfying a strict inequality in (\ref{eq3}) for every $t\in [0,T']$, then one obtains by the continuity of $X^c_t$ that a strict inequality in (\ref{eq3}) happens for every $w'$ in a neighborhood of $w$ in $\W^s_{\delta'}(z)$ for every $t\geq 0$. This shows that there exists some $w$ in $\W^s_{\delta'}(z)$ with $d_s(z,w)=\eta_z$ satisfying that $d_s(X^c_T(z),X^c_{h_{w}(T)})= \epsilon$ for some $T\geq 0$, otherwise $\eta_z$ would not be a supremum since $\eta_z$ would not be an upper bound. This proves the claim.
\end{proof}

Since $\W^s_{\eta_z}(z)\subset\Stable^+_\epsilon(z)$ for every $z\in(-\infty,x]_c$, the next claim gives us uniform size of forwards $\epsilon$-stable set of $X^c_t$ at the stable leaf of every point in $(-\infty,x]_c$.

\begin{claim3}
There exists $\delta>0$ such that $\eta_z>\delta$ for every $z\in (-\infty,x]_c$.
\end{claim3}

\begin{proof}\let\qed\relax Suppose by contradiction that there exists a sequence $(z_n)_n$ in $(-\infty,x]_c$ so that $(\eta_{z_n})_n$ tends to $0$. Without loss of generality, suppose $\eta_{z_n}<\delta'$ for every $n$. By the previous claim, one has that for every $n$ there exists $w_n\in \W^s_{\delta'}(z_n)$ such that $d_s(z_n,w_n)=\eta_{z_n}$ and $$d_s(X^c_{T_n}(z_n),X^c_{h_{w_n}(T_n)}(w_n))= \epsilon$$ for some $T_n\geq 0$. Note that, by the continuity of $X^c_t$, since $(\eta_{z_n})_n$ tends to $0$ then $(T_n)_n$ tends to $+\infty$ .

For every $n$ let us denote $p_n=X^c_{T_n}(z_n)$ and $q_n=X^c_{h_{w_n}(T_n)}$. One has that $d_s(p_n,q_n)=\epsilon$ and that there exists a homeomorphism $h_n:[-T_n,+\infty)\to[-T_n,+\infty)$ such that $$d_s(X^c_t(p_n),X^c_{h_n(t)}(q_n))\leq \epsilon$$ for every $t\in [-T_n,+\infty)$.

Let $(p,q)\in M\times M$ be an accumulation point of the sequence $((p_n,q_n))_n$. One obtains that $d_s(p,q)=\epsilon$ and, since  $(T_n)_n$ tends to $+\infty$, that there exists an orientation preserving homeomorphism $h_q:\R\to \R$ with $h_q(0)=0$ so that $$d_s(X^c_t(p),X^c_{h_q(t)}(q))\leq \epsilon$$ for every $t\in \R$. This contradicts Proposition \ref{propexpansiveflow}. This proves the claim.
\end{proof}

We have shown that there exists $\delta>0$ such that for every $z\in (-\infty,x]_c$ the set $\W^s_{\delta}(z)$ is a subset of $\Stable^+_\epsilon(z)$. It is immediate, by the continuity of $X^c_t$, that this passes to the limit and one obtains that $\W^s_{\delta}(y)$ is a subset of $\Stable^+_\epsilon(y)$. This ends the proof of the proposition.
\end{proof}

The proof of the next corollary is the only place where the hypothesis of a dense leaf of $\W^c$  is needed.

\begin{cor}\label{corestrellatransitive}
The function $\rho$ is continuous at every $x$ in $M$ such that $\W^c(x)$ is a line. 
\end{cor}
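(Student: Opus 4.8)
The plan is to extract from the dense-leaf hypothesis a single point $x^*\in Y$ whose center orbit is dense in both forward and backward time, and then invoke Proposition \ref{propalphaomega_c} together with Lemma \ref{lemmasanducontpointtheninX}.

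First, the dense leaf of $\W^c$ must be a line: a compact dense leaf would be all of $M$, forcing $M\cong S^1$, which is incompatible with a partially hyperbolic splitting into three bundles. Thus $X^c_t$ has a dense orbit and so is transitive. By a Baire category argument for transitive flows on a compact metric space without isolated points — applied to $X^c_t$ and also to the reversed flow $X^c_{-t}$ — the set $R^+$ of points whose forward $X^c_t$-semiorbit is dense in $M$, and the set $R^-$ of points whose backward semiorbit is dense in $M$, are each residual in $M$. Since $Y$ is residual (Proposition \ref{propYresidualWcsatpathconn}), the set $Y\cap R^+\cap R^-$ is residual, hence nonempty; fix $x^*$ in it.

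Next I would upgrade density of the semiorbit to density of the limit set. Since $\W^c(x^*)$ is a line, the forward orbit $\{X^c_t(x^*):t\ge 0\}$ is an injective continuous image of $[0,+\infty)$, hence a countable union of compact arcs, each nowhere dense in $M$ (as $\dim M\ge 2$); so the forward orbit is meager. As $M\setminus\omega^c(x^*)$ is contained in this forward orbit and $\omega^c(x^*)$ is closed, $\omega^c(x^*)$ is closed and comeager, hence $\omega^c(x^*)=M$; symmetrically $\alpha^c(x^*)=M$.

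Finally, $x^*\in Y$, so by Lemma \ref{lemmanofixedpointsinY} either $x^*\in Y^+$ or $x^*\in Y^-$. If $x^*\in Y^+$, Proposition \ref{propalphaomega_c} gives that every point of $\alpha^c(x^*)=M$ is an $s$-continuity point of $\rho$ and every point of $\omega^c(x^*)=M$ is a $u$-continuity point; if $x^*\in Y^-$, the roles of $\alpha^c$ and $\omega^c$ are simply exchanged. Either way, every point of $M$ is simultaneously an $s$-continuity point and a $u$-continuity point of $\rho$. Hence, for any $x$ with $\W^c(x)$ a line, Lemma \ref{lemmasanducontpointtheninX} shows $\rho$ is continuous at $x$, which is the assertion of the corollary. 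The step I expect to need the most care is the Baire input guaranteeing that $R^+$ and $R^-$ are residual — that is, producing points of $Y$ with genuinely dense (not merely somewhere dense) forward and backward center orbits — since everything afterwards only recombines previously established facts.
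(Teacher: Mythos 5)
Your argument is correct and follows essentially the same route as the paper's proof: use transitivity of $X^c_t$ plus Baire category to find a point of the residual set $Y$ with $\alpha^c=\omega^c=M$, then apply Proposition \ref{propalphaomega_c} and conclude with Lemma \ref{lemmasanducontpointtheninX}. The additional details you supply (that the dense leaf must be a line, and the meagerness argument upgrading a dense semiorbit to $\omega^c(x^*)=M$) are sound refinements of the step the paper states directly, namely that $\{x\mid \alpha^c(x)=\omega^c(x)=M\}$ is residual for a transitive flow.
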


\begin{proof} Since $X^c_t$ is transitive the set $\{x\in M \mid \alpha^c(x)=\omega^c(x)=M\}$ is a residual subset of $M$. The set $Y$ is also residual. Thus there exists  $x_0$ in $Y$ such that $\alpha^c(x_0)=\omega^c(x_0)=M$. By Proposition \ref{propalphaomega_c} every point $z \in M$ such that $\W^c(z)$ is a line is a $s$ and $u$-continuity point for $\rho$. By Lemma \ref{lemmasanducontpointtheninX} one concludes.
\end{proof}

For completeness, let us state what we know how without assuming a dense leaf of $\W^c$:

\begin{remark}\label{rmkhypWctransitive}
The same arguments for showing Proposition \ref{propalphaomega_c} allows one to conclude that if $y$ is a point in $\alpha^c(x^+)$ or $\omega^c(x^-)$ for some $x^+\in Y^+$ or $x^-\in Y^-$, then every point $z$ in $\W^s(y)$ such that $\W^c(z)$ is a line is a $s$-continuity point for $\rho$. And the analogous statement for $u$-continuity points.

Moreover, it is not difficult to show that if $x$ is a point that does not lie in the non-wandering set $\Omega(X^c_t)$ of the center flow $X^c_t$, then $x$ is a continuity point of $\rho$.

If it were the case that one is able to deduce from the above that $\rho$ needs to be continuous at every line leaf of $\W^c$, then one would be able to conclude that $f$ has to be a discretized Anosov flow because the arguments in the next section do not need the hypothesis of a dense center leaf.
\end{remark}

\section{The function $\rho$ is bounded in $M$}\label{sectionrhobounded}

This section ends the proof of Theorem \ref{thm1}. Up until now we have shown that the function $\rho$ is continuous at every $x$ in $M$ such that $\W^c(x)$ is a line.  We will see in this section how to show from this that $\rho$ is bounded in $M$.

Given $\C\in \W^c$ a circle leaf, it follows from Lemma \ref{lemmatwocircleleaves} that $\W^c(y)$ is a line for every $y$ in $\W^s(\C)\setminus \C$. In particular, the center segment $[y,f(y)]_c$ is well defined for every $y\in \W^s(\C)\setminus \C$ and by Corollary \ref{corestrellatransitive} the function $\rho$ is continuous in restriction to $\W^s(\C)\setminus \C$.

Recall that the stable saturation $\W^s(\C)$ of $\C$ is contained in the center-stable leaf $\W^{cs}(\C)$ but that a priori $\W^s(\C)$ may be a proper subset of $\W^{cs}(\C)$ (this in known as the \emph{completeness} problem).

\begin{lemma}\label{lemma[y,f(y)]_csubsetW^s(C)}
Suppose $\C\in \W^c$ is a circle leaf. For every $y\in \W^s(\C)\setminus \C$ one has that $[y,f(y)]_c$ is contained in $\W^s(\C)$.
\end{lemma}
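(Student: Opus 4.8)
The goal is to show that the center segment $[y,f(y)]_c$ stays inside the stable saturation $\W^s(\C)$ of the circle leaf $\C$. The plan is to argue by a connectedness/openness argument along the segment. Let $J$ be the set of $z\in [y,f(y)]_c$ such that $z\in \W^s(\C)$; this set is nonempty (it contains $y$) and, because $\W^s(\C)$ is a union of full stable leaves, it suffices to show $J$ is both open and closed in $[y,f(y)]_c$. Closedness should follow from the fact that $\W^c(z)$ is a line for $z\in \W^s(\C)\setminus\C$ and from continuity of $\rho$ on $\W^s(\C)\setminus\C$ (Corollary \ref{corestrellatransitive}): a limit point of $J$ sits on a center leaf that stays at bounded stable distance from $\C$, and one can use the center-holonomy / foliation box structure to see the stable leaf of the limit point still reaches $\C$.

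For the openness step, suppose $z\in J$, so $z\in \W^s(\C')$ for some — in fact the unique — circle $\C'=\C$ (by Lemma \ref{lemmatwocircleleaves} there is only one circle leaf meeting a given stable leaf). The center leaf $\W^c(z)$ is a line and $\rho$ is continuous at $z$, so there is a $\W^c$-foliation box $U$ around $[z,f(z)]_c$ in which nearby center segments $[w,f(w)]_c$ vary continuously. Points $w$ near $z$ along the center segment $[y,f(y)]_c$ lie in stable leaves $C^0$-close to $\W^s(z)$; I want to propagate the property "stable leaf hits $\C$" from $z$ to these nearby points. The mechanism is to flow along $\W^s$ toward $\C$ (forward iteration of $f$ contracts stable leaves), using that $\C$ is compact and $f$-invariant and that the center foliation inside $\W^{cs}(\C)$ gives a product-like neighborhood of $\C$: once a point is stable-close enough to $\C$, its stable leaf must be one of the stable leaves through $\C$. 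Concretely, iterate $f^n$ so that $f^n(z)$ lands in a small tubular neighborhood of $\C$; nearby center segments iterate to nearby center segments, and a neighborhood of $f^n(z)$ in $[f^n(y),f^n(f(y))]_c$ lies in $\W^s_{\mathrm{loc}}(\C)$; pulling back by $f^{-n}$ and using $f(\W^s(\C))=\W^s(\C)$ shows a neighborhood of $z$ in $[y,f(y)]_c$ lies in $\W^s(\C)$.

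A cleaner way to organize the same idea: first reduce to showing that if $z\in[y,f(y)]_c$ and $z\in\W^s(\C)$, then the whole center plaque of $z$ in a small foliation box is in $\W^s(\C)$ — equivalently that being in $\W^s(\C)$ is a local condition along the center segment once one knows $\rho$ is continuous and the relevant center leaves are lines. Since $[y,f(y)]_c$ is connected and $\W^s(\C)$ is $\W^c$-saturated on the piece $\W^s(\C)\setminus\C$ in this local sense, $J$ is clopen and nonempty, hence all of $[y,f(y)]_c$.

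I expect the \textbf{main obstacle} to be the closedness of $J$ at a limit point $z_\infty$ where one must rule out $z_\infty\in\C$ itself (there $\W^c$ is a circle, not a line, and $\rho$-continuity was only established on lines) and, more seriously, rule out the completeness-type failure where the stable leaf of $z_\infty$ fails to actually reach $\C$ even though it stays near it — this is exactly the subtlety flagged in the paragraph before the lemma. The resolution should again come from the compactness of $\C$ and normal hyperbolicity: a stable leaf staying within a fixed small tubular neighborhood of the compact invariant normally hyperbolic manifold $\C$, under forward iteration, is forced to accumulate on $\C$, and the local product structure inside $\W^{cs}(\C)$ then pins it to one of the stable leaves through $\C$. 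Making this last point precise — that "stable distance to $\C$ stays bounded along the forward orbit" genuinely implies "lies in $\W^s(\C)$" — is the crux, and it is where one uses that $\C$ is a circle leaf rather than merely a compact set.
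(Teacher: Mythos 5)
Your overall strategy (a clopen argument for the set $J=\{z\in[y,f(y)]_c : z\in\W^s(\C)\}$) is reasonable for the openness half: since $\C$ is the full center leaf through its points, local product structure of $\W^s$ and $\W^c$ inside $\W^{cs}(\C)$ shows that $\W^s(\C)$ is open in the leaf topology of $\W^{cs}(\C)$, and your iteration/pull-back mechanism can be made precise. The genuine gap is the closedness step, exactly the point you flag as the crux, and the resolution you propose does not work. If $z_k\in J$ converges to $z_\infty$, nothing bounds the stable distance from $z_k$ to $\C$ (your phrase ``stays at bounded stable distance from $\C$'' is unjustified); if that distance were bounded you could pass bounded stable arcs to the limit, but its unboundedness is precisely the completeness-type failure to be excluded. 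In the bad scenario $z_\infty$ lies on a stable leaf contained in the boundary $\partial_{cs}\W^s(\C)$ of $\W^s(\C)$ inside $\W^{cs}(\C)$, and such a boundary leaf can perfectly well accumulate on, or return ambiently close to, the compact circle $\C$ without ever meeting it: ambient (or even $cs$-intrinsic) proximity of an immersed plaque to $\C$ does not pin it to a stable leaf through $\C$, and there is no reason the forward iterates of $\W^s(z_\infty)$ stay in a fixed small tubular neighborhood of $\C$. So ``compactness of $\C$ plus normal hyperbolicity'' alone does not close the argument; no purely local or soft limit argument along the single segment $[y,f(y)]_c$ appears in the paper either.

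What the paper actually does is global and dynamical. It decomposes $\W^s(\C)\setminus\C$ into the set $A$ of points whose segment $[y,f(y)]_c$ stays in $\W^s(\C)$ and its complement $B$, shows both are open (using continuity of $\rho$ on $\W^s(\C)\setminus\C$), so they are unions of connected components of $\W^s(\C)\setminus\C$. Assuming $B\neq\emptyset$, for $y\in B$ it considers the first exit points $y^\pm\in\partial_{cs}\W^s(\C)$ of $\W^c(y)$ from $\W^s(\C)$, shows the map $y\mapsto \W^s(y^+)$ is locally constant and stable-saturated, hence its image is one or two stable leaves; such a leaf $V$ is $f^2$-invariant, so $f^2$ has a unique fixed point in $V$, and the corresponding segment $[y_0^-,y_0^+]_c$ is a compact $f^2$-invariant set disjoint from $\C$, contradicting that $y_0\in\W^s(\C)$ is attracted to $\C$ under forward iteration. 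This fixed-point/invariance mechanism is what rules out the boundary scenario, and your proposal contains no analogue of it; to repair your proof you would need to replace the closedness claim by an argument of this global type rather than an appeal to normal hyperbolicity of $\C$.
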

\begin{proof}
Since $\rho$ is continuous in restriction to $\W^s(\C)\setminus \C$ one has that for every $y\in \W^s(\C)\setminus \C$ the compact center segment $[y,f(y)]_c$ varies continuously with $y$ in the Hausdorff topology.

Let $A$ denote the set of points $y\in \W^s(\C)\setminus \C$ such that $[y,f(y)]_c$ is contained in $\W^s(\C)\setminus \C$. Let $B$ denote its complement in $\W^s(\C)\setminus \C$ so that $\W^s(\C)\setminus \C$ is equal to the disjoint union $A\cup B$. The goal is to show that $B$ is empty.

Let $\partial_{cs}\W^s(\C)\subset \W^{cs}(\C)$  denote the boundary of $\W^s(\C)$ in $\W^{cs}(\C)$. Since $\W^s(\C)$ is saturated by leaves of $\W^s$ the set $\partial_{cs}\W^s(\C)$ is a union of leaves of $\W^s$. It follows that $y\in \W^s(\C)\setminus \C$ is in $B$ if and only if $[y,f(y)]_c\cap \partial_{cs}\W^s(\C)\neq \emptyset$. 

Since $y\mapsto [y,f(y)]_c$ varies continuously with $y$ in $\W^s(\C)\setminus \C$ it is immediate to check that both $A$ and $B$ are open subsets of $\W^s(\C)\setminus\C$ (for this, note that if $y\in B$ then $[y,f(y)]_c$ is transverse to $\W^s(f(y))$ at $f(y)$). 

As $\W^s(\C)\setminus \C$ is the union of the disjoint open sets $A$ and $B$ it follows that $A$ and $B$ comprise whole connected components of $\W^s(\C)\setminus \C$. Note that if $\dim(E^s)\geq 2$ then $\W^s(\C)\setminus \C$ has only one connected component and if $\dim(E^s)=1$ it may have two. We will cover both scenarios simultaneously. 

Suppose by contradiction that $B$ is not empty. For every $y\in B$ the center segment $[y,f(y)]_c$ intersects $\partial_{cs}\W^s(\C)$. Since $\W^s(\C)$ is $f$-invariant it follows that $f^{-1}\circ [y,f(y)]_c=[f^{-1}(y),y]_c$ also intersects $\partial_{cs}\W^s(\C)$. One can then consider $y^+$ and $y^-$ the `first time' that $\W^c(y)$ leaves $\W^s(\C)$ in both directions. That is, $y^-$ and $y^+$ are the only points in $\W^c(y)\cap \partial_{cs}\W^s(\C)$ such that there exists a center segment $(y^-,y^+)_c$ contained in $\W^s(\C)$ and satisfying $y \in (y^-,y^+)_c$.  In other words, $(y^-,y^+)_c$ is the connected component of $\W^c(y)\cap \W^s(\C)$ containing $y$.

It is immediate to check (by transversality again) that the functions $y\mapsto y^+$ and $y\mapsto y^-$ are continuous from $B$ to $\partial_{cs}\W^s(\C)$. Moreover, if $z$ is a point in $\partial_{cs}\W^s(\C)$ that is in the image of $y\mapsto y^+$, then every $z'\in \W^s(z)$ need to be also in the image of $y\mapsto y^+$. This is because, by stable holonomy, one can transport $(y,y^+)_c$ to a center segment in $\W^s(\C)$ such that one of its endpoints is $z'$.

Moreover, for every $y\in B$ it is immediate to check that there exists $\epsilon>0$ such that $w^+$ lies in $\W^s(y^+)$ for every $w\in \W^{cs}_\epsilon(y)$. That is, the function that assigns to every $y\in B$ the stable leaf $\W^s(y^+)$ is locally constant. Combined with the information from the previous paragraph one obtains that the image by $y\mapsto y^+$ of $B$ is exactly one or two leaves of $\W^s$, whether $B$ has one or two connected components, respectively.

Let $V$ be one of the leaves of $\W^s$ in the image of $y\mapsto y^+$. Since $B$ is $f$-invariant and has at most two connected components then $V$ is invariant by $f^2$. It follows that $f^2$ induces a contraction in $V$. As a consequence $f^2$ has a fixed point in $V$ and this fixed point is unique. Let $y_0\in B$ be such that $y_0^+$ is the fixed point of $f^2$ in $V$. 

On the one hand, since $\W^s(\C)$ and $\partial_{cs}\W^s(\C)$ are $f$-invariant then the image of $[y_0^-,y_0^+]_c$ by $f^2$ is a center segment whose interior lies in $\W^s(\C)$  and its end-points lie in $\partial_{cs}\W^s(\C)$. Since $y_0^+$ is fixed by $f^2$ it follows that $[y_0^-,y_0^+]_c$ is invariant by $f^2$.

On the other hand, $[y_0^-,y_0^+]_c$ contains the point $y_0$ which is a point in $\W^s(x_0)$ for some $x_0\in\C$. By iterating forwards by $f^2$ one obtains that the orbit of $y_0$ needs to get arbitrarily close to $\C$. Since $[y_0^-,y_0^+]_c$ is $f^2$-invariant this contradicts the fact that $[y_0^-,y_0^+]_c$ and $\C$ are disjoint compact sets that are at a positive distance from each other. This shows that the set $B$ needs to be empty and ends the proof of the lemma.
\end{proof}

\begin{lemma}\label{lemmarhonoundedinWsloc}
Suppose $\C\in \W^c$ is a circle leaf. There exists $\delta>0$ such that $\rho$ restricted to $\W^s_\delta(\C)$ is bounded.
\end{lemma}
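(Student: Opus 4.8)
The plan is to work locally near the circle leaf $\C$, isolate a compact ``fundamental annulus'' on which $\rho$ is bounded for free (by Corollary \ref{corestrellatransitive}), and then control how center arcs get stretched when one iterates towards $\C$.

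For the setup, since $\C$ is a compact $f$-invariant submanifold that is normally hyperbolic (as noted in the proof of Lemma \ref{lemmafinitelymanyleavesoflength<R}), I would fix $\ell\ge 1$ and $\delta_0>0$ so that $N:=\W^s_{\delta_0}(\C)$ is an open neighbourhood of $\C$ inside $\W^{cs}(\C)$ with compact closure $\overline N=\W^s_{\le\delta_0}(\C)$, with $f^\ell(\overline N)\subset N$ (strict stable contraction) and $\bigcap_{k\ge 0}f^{k\ell}(\overline N)=\C$ (stable expansion off $\C$). Then $A:=\overline N\setminus f^\ell(N)$ is a compact subset of $\W^s(\C)\setminus\C$: it is closed in $\overline N$, and it misses $\C$ because $\C=f^\ell(\C)\subset f^\ell(N)$. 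By Corollary \ref{corestrellatransitive}, $\rho$ is continuous on $\W^s(\C)\setminus\C$, hence bounded on $A$, say $\rho\le M_0$ there; and by Lemma \ref{lemma[y,f(y)]_csubsetW^s(C)}, for $y'\in A$ the arc $[y',f(y')]_c$ lies in $\W^s(\C)$ and has length $\rho(y')\le M_0$. Finally every $y\in N\setminus\C$ can be written as $y=f^{k\ell}(y')$ with $y'\in A$ and $k\ge 0$ (pull $y$ back by $f^{-\ell}$ until the orbit is about to leave $\overline N$), and then $[y,f(y)]_c=f^{k\ell}\big([y',f(y')]_c\big)$ since $f$ preserves the orientation of center leaves. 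So the lemma reduces to the estimate
\[
M_1:=\sup\big\{\length\, f^{k\ell}([y',f(y')]_c):\ y'\in A,\ k\ge 0\big\}<\infty,
\]
after which $\rho\le\max\big(M_1,\length(\C)\big)$ on $\W^s_{\delta_0}(\C)$, proving the statement with $\delta=\delta_0$.

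To prove $M_1<\infty$, note $f^{k\ell}([y',f(y')]_c)=[y,f(y)]_c$ with $y=f^{k\ell}(y')$, so the claim is just $\rho\le M_1$ on $\bigcup_k f^{k\ell}(A)\supseteq N\setminus\C$. Suppose it fails: choose $y'_n\in A$, $k_n\ge 0$ with $\rho(y_n)\to\infty$ where $y_n:=f^{k_n\ell}(y'_n)$. Since $A$ is a compact subset of $\W^s(\C)$ the forward contraction is uniform on $A$, so $k_n\to\infty$ and $y_n\to\C$; pass to a subsequence with $y_n\to p\in\C$. Parametrizing $[y_n,f(y_n)]_c$ by arc length as $\gamma_n:[0,L_n]\to\W^s(\C)$, $\gamma_n(0)=y_n$, $\gamma_n(L_n)=f(y_n)$, $L_n=\rho(y_n)\to\infty$, after a further subsequence $\gamma_n(s)=X^c_s(y_n)$ and $X^c_{L_n}(y_n)=f(y_n)\to f(p)\in\C$. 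The $\gamma_n$ are $1$-Lipschitz, so by Arzel\`a--Ascoli and a diagonal argument a subsequence converges uniformly on compact subsets of $[0,\infty)$ to a $1$-Lipschitz curve tangent to $E^c$, i.e.\ to a center-flow ray from $p$; as $\W^c(p)=\C$ this ray traces out the periodic orbit $\C$. The point of the argument is then that the long arcs $\gamma_n$ are trapped $\epsilon_0$-close to $\C$ (with $\epsilon_0$ the expansiveness constant of Proposition \ref{propexpansiveflow}), so $\gamma_n$ $\epsilon_0$-shadows a reparametrized $X^c_t$-orbit through a point of $\C$ on an interval of length $L_n\to\infty$; re-basing at the midpoint and passing to the limit produces a point whose entire $X^c_t$-orbit stays $\epsilon_0$-close to $\C$ and hence lies on $\C$ by $\epsilon_0$-expansiveness. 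Following one winding of $\gamma_n$ around $\C$ then yields a point of the \emph{line} leaf $\W^c(y_n)$ in $\W^s_\epsilon(y_n)\setminus\{y_n\}$ for some $\epsilon$ comparable to $\epsilon_0$, contradicting Proposition \ref{propnouautobads2}.

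The main obstacle is exactly the trapping claim: a priori a center arc with both endpoints close to $\C$ could bubble out of a small neighbourhood of $\C$ in the middle. I would rule this out using that $\C$ is isolated among center leaves — the proof of Lemma \ref{lemmafinitelymanyleavesoflength<R} shows any circle leaf Hausdorff-close to $\C$ equals $\C$ — so that an excursion of $\gamma_n$ to a fixed small distance $d_*$ from $\C$ would, after extracting a limit, produce a \emph{line} center leaf at distance $d_*$ from $\C$ whose $X^c_t$-orbit nonetheless $\epsilon_0$-shadows $\C$ forever, again contradicting Proposition \ref{propexpansiveflow}. A secondary point is that Propositions \ref{propnouautobads2} and \ref{propexpansiveflow} supply only non-uniform scales along $y_n\to p$; this is handled by first fixing $\epsilon>0$ with $\C\cap\W^s_\epsilon(p)=\{p\}$ and transferring it, via local product structure in $\W^{cs}(\C)$ and continuity of $\W^s$, to $\W^c(y_n)\cap\W^s_{\epsilon/2}(y_n)=\{y_n\}$ for large $n$. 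As an alternative to the expansiveness step one could instead project $[y',f(y')]_c$ to $\C$ by the stable holonomy (well defined by Lemma \ref{lemmatwocircleleaves}) and use that a degree-one circle homeomorphism cannot increase the winding length of a curve in $\C$ by more than $\length(\C)$, together with a bounded-distortion estimate for the center derivative along the exponentially shrinking stable leaves; I expect the expansiveness route to be less sensitive to the mere continuity of $E^c$.
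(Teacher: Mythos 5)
Your reduction through the fundamental annulus $A$ carries no content by itself: since $f$ fixes each center leaf, $f^{k\ell}([y',f(y')]_c)=[y,f(y)]_c$ for $y=f^{k\ell}(y')$, so the finiteness of $M_1$ is verbatim the statement that $\rho$ is bounded on $N\setminus\C$, i.e.\ the lemma. Everything therefore rests on the compactness/shadowing argument, and there are two genuine gaps in it. The first is exactly the ``trapping'' step you flag. If the piece of $\gamma_n$ up to its first exit from the $d_*$-tube has length tending to infinity, the limit of the exit points is a point $q$ at distance $d_*$ from $\C$ for which you control only a half center-orbit (a ray) staying near $\C$; the expansiveness of Proposition \ref{propexpansiveflow} requires closeness for \emph{all} $t\in\R$, so no contradiction follows. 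Worse, such points genuinely exist in the model you are trying to reach: for a discretized Anosov flow, points on the unstable saturation of $\C$ at distance $d_*$ have a backward center ray that stays near $\C$ forever. Nor can you argue intrinsically inside $\W^{cs}(\C)$ (where such a ray would indeed force $q\in\C$), because $\W^s(\C)$ and $\W^{cs}(\C)$ are not closed in $M$, so the limit $q$ of points of the arcs need not lie in them; for the same reason the ``$\C$ is isolated among circle leaves'' fact does not apply, since the limit object is not known to be a circle leaf.

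The second gap is the endgame. The return point produced by one winding lies at stable distance of the order of the tube width (a fixed constant) from $y_n$, so you need $\W^c(y_n)\cap\W^s_{\epsilon}(y_n)=\{y_n\}$ at a scale $\epsilon$ \emph{uniform} in $n$. Proposition \ref{propnouautobads2} only gives a point-dependent scale, and the proposed transfer from $p\in\C$ to $y_n$ ``by local product structure and continuity of $\W^s$'' is not a proof: no-return at a definite scale is a property of the whole noncompact leaf $\W^c(y_n)$, while continuity of the foliations controls only compact plaques; a line leaf can perfectly well meet one stable leaf in many points (this already happens for the time-one map of an Anosov flow along the stable manifold of a periodic orbit), and excluding such returns at a definite scale uniformly near $\C$ is essentially a reformulation of the boundedness of $\rho$ you are proving, so the argument becomes circular. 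For comparison, the paper's proof avoids both issues: using Lemma \ref{lemmatwocircleleaves} and Lemma \ref{lemma[y,f(y)]_csubsetW^s(C)} it projects $[y,f(y)]_c$ along stable leaves to a curve in $\C$, uses the continuity of $\rho$ on $\W^s(\C)\setminus\C$ (Corollary \ref{corestrellatransitive}) and the $f$-equivariance of $\W^s$ to show this projected curve depends only on the stable leaf of $y$, and deduces that $y\mapsto[y,f(y)]_c$ extends continuously to $\C$, whence $\rho$ is bounded (indeed continuous) near $\C$ inside $\W^s(\C)$. Your ``alternative'' via stable holonomy is close in spirit to this, but no bounded-distortion estimate for the center derivative is needed — or available, $E^c$ being merely continuous.
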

\begin{proof} The set $\W^s(\C)\setminus \C$ has one or two connected components. Without loss of generality let us suppose that is has one. Otherwise, one should only repeat the argument below separately on each connected component.

Suppose $y$ in $\W^s(\C)\setminus \C$. By Lemma \ref{lemmatwocircleleaves} the leaf $\W^s(y)$ intersects $\C$ in a unique point. Let us call it $p^sy$.

Let $\gamma_y:[0,1]\to \W^c(y)$ be the $C^1$ curve of constant speed such that $\gamma_y(0)=y$ and $\gamma_y(1)=f(y)$. By Lemma \ref{lemma[y,f(y)]_csubsetW^s(C)} the center segment $[y,f(y)]_c$ is contained in $\W^s(\C)$.  It follows that there exists $p^s\gamma_y:[0,1]
\to \C$ the (unique) continuous curve such that $p^s\gamma_y(0)=p^sy$ and $
\gamma_y(t) \in \W^s(p^s\gamma_y(t))$ for every $t\in [0,1]$. 

By Corollary \ref{corestrellatransitive} the function $\rho$ is continuous in restriction to $\W^s(\C)\setminus \C$. That is, $
\gamma_y$ varies continuously with $y\in \W^s(\C)\setminus \C$ in the $C^1$ topology. At the same time, if $y'$ varies continuously in $\W^s(p^sy)\setminus p^sy$ one has that $f(y')$ varies continuously in $\W^s(f(p^sy))\setminus f(p^sy)$. One obtains that $p^s\gamma_y$ needs to be a reparametrization of $p^s\gamma_{y'}$ for every $y'$ in $\W^s(y)$.

Given $x$ in $\C$ and $y\in \W^s(x)\setminus x$ let $\gamma_x:[0,1]\to \C$ be the constant speed reparametrization of $p^s\gamma_y$. From the above paragraph one has that the definition of $\gamma_x$ is independent of the point $y$ one chooses in $\W^s(x)\setminus x$.

It is now immediate to check that $z\mapsto \gamma_z$ varies continuously in the $C^1$ topology as $z$ varies in $\W^s(\C)$. Since this implies that $z\mapsto \length(\gamma_z)$ varies continuously with $z\in \W^s(\C)$ it follows that $\rho$ is continuous in a neighborhood of $\C$ in $\W^s(\C)$.
\end{proof}

\begin{prop}\label{proprhoboundednearcircleleaf}
For every circle leaf $\C\in \W^c$ there exists a neighborhood $U$ of $\C$ such that $\rho|_U:U\to \R$ is bounded.
\end{prop}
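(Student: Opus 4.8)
The plan is to reduce to the two ``codimension one'' pictures that are essentially already in hand — one inside $\W^{cs}(\C)$, one inside $\W^{cu}(\C)$ — and then to fill in a full neighborhood of $\C$ in $M$ by a uniform stable holonomy argument.

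First I would record the unstable counterparts of Lemma \ref{lemma[y,f(y)]_csubsetW^s(C)} and Lemma \ref{lemmarhonoundedinWsloc}: the very same proofs go through with $\W^u$ in place of $\W^s$ and backward iteration in place of forward iteration (the boundary leaf of $\W^u(\C)$ inside $\W^{cu}(\C)$ is now $f^{-2}$-attracting, hence carries a fixed point of $f^2$, and so on). This produces $\delta_0>0$ and $L_0>0$ such that $\rho\leq L_0$ on $N^{cu}:=\W^u_{\delta_0}(\C)$, such that $[w,f(w)]_c\subset\W^u(\C)$ for every $w\in N^{cu}\setminus\C$, and such that $\rho$ is continuous on $N^{cu}$. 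Since $\W^c_\epsilon(x)\subset\C$ whenever $x\in\C$, local product structure inside $\W^{cu}(\C)$ shows that $N^{cu}$ is a neighborhood of $\C$ in $\W^{cu}(\C)$, and $N^{cu}$ is compact. By local product structure between $\W^s$ and $\W^{cu}$ (equivalently, by normal hyperbolicity of $\C$), for every small $\epsilon>0$ the set $U_\epsilon:=\W^s_\epsilon(N^{cu})=\bigcup_{w\in N^{cu}}\W^s_\epsilon(w)$ is then a neighborhood of $\C$ in $M$.

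Next, using compactness of $N^{cu}$ and continuity of $\rho$ on it, the family $\{[w,f(w)]_c:w\in N^{cu}\}$ is a compact (in the Hausdorff topology) family of center arcs of length $\leq L_0$ contained in the compact set $K:=\bigcup_{w\in N^{cu}}[w,f(w)]_c\subset\W^u(\C)$; a standard argument then yields a single $\epsilon>0$ such that for every $w\in N^{cu}$ the stable center-holonomy $\mathcal H_w$ along $[w,f(w)]_c$ inside $\W^{cs}(w)$ is defined on $\W^s_\epsilon(w)$, maps it into $\W^s(f(w))$, and carries $[w,f(w)]_c$ to center arcs of length $\leq 2L_0$. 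Fix such an $\epsilon$, also smaller than the local product scale. Given $z\in U_\epsilon$, write $z\in\W^s_\epsilon(w)$ with $w\in N^{cu}$ and set $\hat z:=\mathcal H_w(z)$. By construction $\hat z\in\W^c(z)$ with $d_c(z,\hat z)\leq 2L_0$, and $\hat z\in\W^s(f(w))$ with $d_s(\hat z,f(w))$ as small as we wish. On the other hand $f(z)\in\W^c(z)$ — this is precisely the center fixing hypothesis $f(\W^c(z))=\W^c(z)$ — and $f(z)\in\W^s(f(w))$ with $d_s(f(z),f(w))$ of order $\epsilon$ by stable contraction. Thus $\hat z$ and $f(z)$ are two points of the line $\W^c(z)$ lying in an arbitrarily small stable ball around $f(w)$. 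If one shows $\hat z=f(z)$, then $\rho(z)=d_c(z,f(z))=d_c(z,\hat z)\leq 2L_0$ for every $z\in U_\epsilon$, and the proposition follows with $U=U_\epsilon$.

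The identity $\hat z=f(z)$ is where the work is, and I expect it to be the main obstacle. It amounts to a local-uniform version of the no self-accumulation Proposition \ref{propnouautobads2} near $\C$: that there are $\eta_0>0$ and a neighborhood $V\supset\C$ with $\W^c(q)\cap\W^s_{\eta_0}(q)=\{q\}$ for every $q\in V$ (applying this with $q=f(z)$, using $\W^c(f(z))=\W^c(z)$ and $\epsilon$ chosen small compared to $\eta_0$, forces $\hat z=f(z)$). Note that on $\C$ itself there is no obstruction: by Lemma \ref{lemmatwocircleleaves} a stable leaf meets the circle $\C$ in at most one point, so $\W^c(q)\cap\W^s_r(q)=\{q\}$ for all $r$ when $q\in\C$. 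To obtain the uniform statement I would argue by contradiction from a sequence $q_n\to p\in\C$ with $p_n\in\W^c(q_n)\setminus\{q_n\}$ and $d_s(q_n,p_n)\to 0$: if the center arcs $[q_n,p_n]_c$ have lengths tending to $0$ this contradicts transversality of $\W^c$ and $\W^s$ inside $\W^{cs}$-foliation boxes; in the remaining case one should, after replacing $(q_n,p_n)$ by the last backward $f$-iterate at which the stable distance is still comparable to the expansivity constant of Proposition \ref{propexpansiveflow}, extract a limiting pair of points on a common center leaf whose forward $f$-iterates remain stably close while their stable distance stays bounded below, and contradict Proposition \ref{propexpansiveflow} — or, re-examined near $\C$, a genuine self-accumulation of a center leaf in a stable disc contradicting Proposition \ref{propnouautobads2}. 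Closing this last case cleanly is the delicate step.
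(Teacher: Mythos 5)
Your global strategy (boundedness of $\rho$ on a one-sided saturate of $\C$, then filling in a full neighborhood by holonomy along the transverse invariant foliation) is the same as the paper's, up to swapping the roles of $\W^s$ and $\W^u$; the unstable analogues of Lemmas \ref{lemma[y,f(y)]_csubsetW^s(C)} and \ref{lemmarhonoundedinWsloc} that you invoke do hold with backward iteration. But the step you yourself flag as open --- the identification $\hat z=f(z)$ --- is a genuine gap, and the statement you propose to close it with is false. You ask for $\eta_0>0$ and a neighborhood $V$ of $\C$ with $\W^c(q)\cap\W^s_{\eta_0}(q)=\{q\}$ for every $q\in V$. Take $f$ the time-one map of a transitive Anosov flow $X_t$ (which satisfies all hypotheses of Theorem \ref{thm1}) and $\C$ a periodic orbit of period $\tau$. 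For $q$ on the local strong stable manifold of a point $p\in\C$, the point $X_\tau(q)$ lies again in $\W^s(p)=\W^s(q)$, it is distinct from $q$, and $d_s(q,X_\tau(q))\to 0$ as $q\to p$; so no uniform $\eta_0$ exists near a circle leaf. This also explains why your fallback contradiction scheme cannot close: for such pairs the backward iterates separate along the stable leaf while remaining on the same center leaf, so neither the expansivity of Proposition \ref{propexpansiveflow} nor Proposition \ref{propnouautobads2} is violated. Pointwise, Proposition \ref{propnouautobads2} gives an $\epsilon$ depending on the point, and near $\C$ it genuinely degenerates; closeness of $\hat z$ and $f(z)$ in a stable leaf alone cannot force them to be equal, because $\W^c(z)$ may return to that stable leaf at center-distance comparable to $\length(\C)$.

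The paper resolves exactly this point differently: instead of a uniform separation estimate, it uses a continuation argument. Having $\rho\le L$ on $\W^s_\delta(\C)$, it transports the segment $[y,f(y)]_c$ by unstable holonomy along a path $\eta$ inside $\W^u_{\delta_L}(y)$ chosen to avoid the (at most countably many, by Lemma \ref{lemmafinitelymanyleavesoflength<R}) compact center leaves; since every $\eta(s)$ lies on a line leaf, Corollary \ref{corestrellatransitive} makes $s\mapsto[\eta(s),f(\eta(s))]_c$ continuous, so these segments coincide with the holonomy transports for all $s$, and in particular the transported endpoint at $s=1$ is $f(z)$, giving $\rho(z)\le 2L$; points $z$ over discs meeting compact leaves are then handled by approximation together with the lower semicontinuity of $\rho$ (Proposition \ref{proprhosemicont}). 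If you want to salvage your write-up, replace the uniform no-self-accumulation claim by this path-continuation use of Corollary \ref{corestrellatransitive} (in your stable/unstable-swapped setting), and add the approximation step for points whose transverse disc meets a compact center leaf.
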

\begin{proof}
Let  $\C\in \W^c$ be a circle leaf. It follows from Lemma \ref{lemmarhonoundedinWsloc} that there exists $\delta>0$ and $L>0$ such that $\rho$ restricted to $\W^s_\delta(\C)$ is bounded by $L$.

From the regularity of $\W^c$ one can define an unstable holonomy along center transversals as follows: There exists $\delta_L>0$ such that, if $y$ is a point in $M$ and $z$ a point in  $\W^u_{\delta_L}(y)$, then for every curve $\gamma:[0,1]\to \W^c(y)$  such that $\gamma(0)=y$ and $\length \gamma \leq L$ there exists a unique curve $p^u\gamma :[0,1]\to \W^c(z)$ given by $p^u \gamma (0)=z$ and $p^u\gamma(t)\in \W^u(\gamma(t))$ for every $t\in [0,1]$, and this curve satisfies that $\length p^u \gamma \leq 2L$.

Let us see that $\rho$ is bounded by $2L$ in $U=\W^u_{\delta_L}(\W^s_\delta(\C)))$. Since $U$ is a neighborhood of $\C$ this will show the proposition.

Given $z$ in $U$ there exists $x\in \C$ and $y\in \W^s_\delta(x)$ such that $z\in \W^u_{\delta_L}(y)$. Let us suppose first that for every $w$ in $\W^u_{\delta_L}(y)$ the center leaf $\W^c(w)$ is not compact. We can join then $y$ with $z$ by a curve $\eta:[0,1]\to \W^u_{\delta_L}(y)$ satisfying that $\eta(0)=y$, $\eta(1)=z$ and $\W^c(\eta(s))$ is a line for every $s\in [0,1]$.

By Corollary \ref{corestrellatransitive} one has that $\rho$ is continuous at every point in the image of $\eta$. Let $\gamma:[0,1]\to \W^c(y)$ be a homeomorphism from $[0,1]$ to $[y,f(y)]_c$ such that $\gamma(0)=y$ and $\gamma(1)=f(y)$. Since $\rho$ is continuous in the image of $\eta$ it follows that for every $s\in [0,1]$ there exists $\gamma_s:[0,1]\to \W^c(\eta(s))$ joining $\eta(s)=\gamma_s(0)$ and $f(\eta(s))=\gamma_s(1)$, and satisfying that $\gamma_0=\gamma$ and  $\gamma_s(t)\in \W^u(\gamma(t))$ for every $t\in [0,1]$. In particular, $f(z)=\gamma_1(1)$.

One has that $\length \gamma \leq L$ since $\rho$ is bounded by $L$ in $\W^s_\delta(\C)$. Then by the election of $\delta_L$ it follows that $\length \gamma_1 \leq 2L$. Since $\gamma_1$ is a curve in $\W^c(z)$ joining $z=\gamma_1(0)$ with $f(z)=\gamma_1(1)$ one obtains that $\rho(z)\leq 2L$.

In case $\W^u_{\delta_L}(y)$ intersects a compact leaf of $\W^c$ one can argue as follows. By Lemma \ref{lemmafinitelymanyleavesoflength<R} all but countably many $y'\in \W^s_\delta(x)$ satisfy that $\W^u_{\delta_L}(y')$ does not intersect a compact leaf of $\W^c$. One can consider then $(y_n)$ a sequence in $\W^s_\delta(x)$ converging to $y$ such that for every $w\in \W^u_{\delta_L}(y_n)$ the center leaf $\W^c(w)$ is not compact. And consider $z_n \in \W^u_{\delta_L}(y_n)$, for every $n$, so that the sequence $(z_n)$ converges to $z$.

By the arguments above one has that $\rho(z_n)\leq 2L$ for every $n$. By the semicontinuity of $\rho$ (see Proposition \ref{proprhosemicont}) it follows that $\rho(z)\leq 2L$. 
\end{proof}

As pointed out in Remark \ref{rmkDAFiftaubounded}, the following ends the proof of Theorem \ref{thm1} as a consequence of Proposition \ref{propcenterfixingL}.

\begin{cor}\label{corfinal}
The function $\rho$ is bounded in $M$
\end{cor}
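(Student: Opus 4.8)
The plan is to derive boundedness of $\rho$ from the two preceding results — continuity of $\rho$ on line leaves (Corollary \ref{corestrellatransitive}) and local boundedness of $\rho$ near circle leaves (Proposition \ref{proprhoboundednearcircleleaf}) — together with compactness of $M$. Concretely, I would show that every point of $M$ has an open neighborhood on which $\rho$ is bounded, and then extract a finite subcover.

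First I would fix $x\in M$ and split into two cases according to the dichotomy of $\W^c$-leaves into lines and circles. If $\W^c(x)$ is a line, then by Corollary \ref{corestrellatransitive} the function $\rho$ is continuous at $x$, so there is an open neighborhood $U_x$ of $x$ and a constant $L_x>0$ with $\rho|_{U_x}\leq L_x$. If instead $\W^c(x)$ is a circle, I set $\C:=\W^c(x)$; then Proposition \ref{proprhoboundednearcircleleaf} provides an open neighborhood $U_\C$ of $\C$ and a constant $L_\C>0$ with $\rho|_{U_\C}\leq L_\C$, and since $x\in\C\subset U_\C$ this neighborhood works for $x$ as well. In either case $x$ has an open neighborhood on which $\rho$ is bounded.

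Then I would observe that the family of these neighborhoods is an open cover of $M$ by sets on which $\rho$ is bounded. Since $M$ is compact, finitely many of them, say $U_{1},\dots,U_{k}$ with respective bounds $L_1,\dots,L_k$, cover $M$, and hence $\rho\leq\max\{L_1,\dots,L_k\}$ on all of $M$. By Remark \ref{rmkDAFiftaubounded} (equivalently, by Proposition \ref{propcenterfixingL}) this boundedness of $\rho$ finishes the proof of Theorem \ref{thm1}.

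I do not expect a serious obstacle here: the corollary is essentially a packaging of the work already done in the previous sections, and the only point requiring a little care is that the circle-leaf case must genuinely yield a neighborhood of the point $x$ itself — which it does, since Proposition \ref{proprhoboundednearcircleleaf} produces a neighborhood of the whole leaf $\C=\W^c(x)$, and $x\in\C$. One could alternatively argue by contradiction, taking $x_n$ with $\rho(x_n)\to\infty$, passing to a convergent subsequence $x_n\to x$, and using continuity of $\rho$ at $x$ when $\W^c(x)$ is a line or local boundedness near $\C=\W^c(x)$ when it is a circle; but the direct covering argument is cleaner and does not even need the lower semicontinuity of $\rho$.
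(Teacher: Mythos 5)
Your argument is correct and is essentially identical to the paper's proof: the same case split between line leaves (continuity at $x$ via Corollary \ref{corestrellatransitive}, hence local boundedness) and circle leaves (Proposition \ref{proprhoboundednearcircleleaf} giving a bound on a neighborhood of $\W^c(x)$), followed by compactness of $M$. No issues.
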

\begin{proof}
Suppose $x\in M$. If $\W^c(x)$ is a line then by Corollary 
\ref{corestrellatransitive} the function $\rho$ is continuous at $x$. In particular, it is bounded in a neighborhood of $x$. If $\W^c(x)$ is a circle, then by Proposition \ref{proprhoboundednearcircleleaf} the function $\rho$ is bounded on a neighborhood of $\W^c(x)$. By compactness of $M$ one obtains that $\rho$ is bounded in $M$.
\end{proof}

\section{Homeomorphisms `fixing' a one-dimensional foliation}\label{sectionhomeos}

One might wonder whether the statement of Theorem \ref{thm1} is not obvious and true in general for continuous maps `fixing' the leaves of a one-dimensional foliation. Let us frame this into a general problem.

\begin{problem}\label{problem1} Suppose $f:M\to M$ is a homeomorphism such that $f(W)=W$ for every leaf $W$ in a one-dimensional foliation $\W$. Under which circumstances does there exists a continuous flow $X_t$ whose orbits are the leaves of $\W$ and a continuous function $\tau:M\to \mathbb{R}$ such that $f(x)=X_{\tau(x)}(x)$ for every $x\in M$?
\end{problem}

Of course, a necessary condition is that $\W$ be orientable. Other conditions (on $f$ or $\W$) are certainly needed. We will present a few examples to illustrate this.

\begin{example}\label{ex1}
Consider $\psi:S^1\to S^1$ a North-South map in $S^1=\mathbb{R}/\mathbb{Z}$ and $X_t$ its suspension flow, generated by the constant vector field $X=(0,1)$ in the suspension manifold $M:=S^1\times [0,1]/_\sim$ given by $(x,1)\sim (\psi(x),0)$ for every $x\in S^1$.
%suspension manifold $M:=S^1\times [0,1]/_{(x,1)\sim (\psi(x),0)}$
It follows that the map $f:M\to M$ given by 
\[ 
f(\theta, s)=
\begin{cases}
          (\psi(r,s) & \text{ if } 0\leq r< 1/2\\
		  (\psi^2(r,s) & \text{ if } 1/2 \leq r< 1
\end{cases}
\]
is a homeomorphism such that $f(W)=W$ for every leaf $W$ in the foliation $\W$ given by the orbits of $X_t$. However, it is immediate that there is no continuous function $\tau:M\to\R$ such that $f(x)=X_{\tau(x)}(x)$ for every $x\in M$. See Figure \ref{fig3}.
\end{example}

\begin{figure}
\centering
\begin{minipage}{.50\textwidth}
  \centering
  {\scriptsize
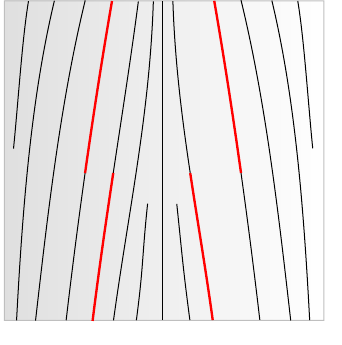
}
  \caption{At $r=\frac{1}{2}$ the function $x\mapsto d_\W(x,f(x))$ tends to 1 from the left and to 2 from the right.}
  \label{fig3}
\end{minipage}%
\begin{minipage}{.51\textwidth}
  \centering
  {\scriptsize
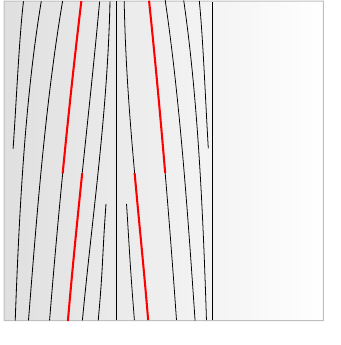
}
  \caption{The function $x\mapsto d_\W(x,f(x))$ is locally unbounded at $r=0$ on each annulus $\theta=const.$}
  \label{fig4}
\end{minipage}
\end{figure}

The above example is a homeomorphism in the 2-torus `fixing' a foliation $\W$  so that the function $x\mapsto d_\W(x,f(x))$ is not continuous. The next one will make a similar construction repeatedly to obtain an example where the function $x\mapsto d_\W(x,f(x))$ is unbounded. 

\begin{example}\label{ex2} 
Consider $\varphi:[0,1]\to [0,1]$ a homeomorphism whose fixed points are exactly $\{0\}\cup\{1/n\mid n\in \mathbb{Z}^+\}$. In the closed 2-disc $D^2$, in polar coordinates $(r,\theta)$ so that $r\in [0,1]$ and $\theta\in S^1$,
%. \mid 0\leq r \leq 1, 0\leq \theta \leq 2\pi\}$ 
 consider the map $\psi(r,\theta)=(\varphi(r),\theta)$. Let $X_t$ be the suspension flow of $\psi$ in the suspension manifold $M:=D^2\times [0,1]/_\sim$ given by $(x,1)\sim (\psi(x),0)$ for every $x\in D^2$. 
 
Consider the map $f:M\to M$ given by $$f(r,\theta,s)=(\varphi^n(r),\theta,s)$$ if $1/(n+1)< r \leq 1/n$ for $n\in \mathbb{Z}^+$ and the identity if $r=0$. It follows that $f$ is a homeomorphism such that $f(W)=W$ for every leaf $W$ in the foliation $\W$ given by the orbits of $X_t$. It is immediate that there is no continuous function $\tau:M\to\R$ such that $f(x)=X_{\tau(x)}(x)$ for every $x\in M$. Moreover, the function $x\mapsto d_\W(x,f(x))$ is locally unbounded at every point in the circle $r=0$. See Figure \ref{fig4}. 

By considering two copies of this example glued adequately by their boundary (so that both foliations match to a foliation) one can obtain a foliation fixing homeomorphism $f$ in $M=S^2\times S^1$ so that $x\mapsto d_\W(x,f(x))$ is locally unbounded at every point in the union of two different circle leaves of $\W$ (one for each solid torus).
\end{example}

\begin{remark}
In the above examples the discontinuity issue for the function $x\mapsto d_\W(x,f(x))$ takes place only at compact leaves of $\W$. However, one can use the model from Example \ref{ex2} to construct a foliation fixing homeomorphism in dimension 3 so that the discontinuity problem for $x\mapsto d_\W(x,f(x))$ occurs at non-compact leaves.

Indeed, taking the foliation given by the flow lines of a transitive Anosov flow as a starting point, it is enough to construct a new foliation $\W$ by blowing-up the periodic orbits  $\{\gamma_i\}_{i\in \mathbb{N}}$ of the flow to two-by-two disjoint solid tori $\{T_i\}_{i\in \mathbb{N}}$ subfoliated as in Example \ref{ex2}. Then $f$ can be taken to be the identity in $M\setminus \bigcup_{i\in \mathbb{N}} T_i$ and a map as in Example \ref{ex2} in each torus $T_i$.

Every point $y$ in $M\setminus \bigcup_{i\in \mathbb{N}} T_i$ lies in a line leaf of $\W$ and is accumulated by points $y_n$ from $\bigcup_{i\in \mathbb{N}} T_i$ so that $x\mapsto d_\W(x,f(x))$ is locally unbounded at $y_n$. It turns out that $x\mapsto d_\W(x,f(x))$ is also locally unbounded at $y$.

One can even consider a flow $X_t$ with the leaves of $\W$ as orbits and compose $f$ with the time $\tau$ of $X_t$ for some continuous function $\tau: M\to \R_{>0}$. The resulting homeomorphism will present the same discontinuity issues for $x\mapsto d_\W(x,f(x))$  and will not be the identity in $M\setminus \bigcup_{i\in \mathbb{N}} T_i$.
\end{remark}

Let us finish by pointing out a scenario where we know an answer to Problem \ref{problem1}. We thank P. Lessa for this proposition.

\begin{prop}
Suppose $f:M\to M$ is a homeomorphism in a compact manifold $M$ and $\W$ is a one-dimensional foliation such that $f(W)=W$ for every leaf $W\in \W$. If $\W$ is minimal and orientable then there exists $\tau:M\to \mathbb{R}$ continuous such that $f(x)=X_{\tau(x)}(x)$ for every $x\in M$, where $X_t:M\to M$ is a continuous flow whose orbits are the leaves of $\W$.
\end{prop}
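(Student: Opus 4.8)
The plan. Since $\W$ is minimal the manifold $M$ is connected. If $\dim M=1$ then $M=S^1$ and $\W$ consists of the single leaf $S^1$; writing $f$ as $X_{\tau}$ for a unit–speed flow $X_t$ then amounts to choosing a periodic lift of $f$ to $\R$, which is possible since $f$ preserves the orientation of the leaf — a hypothesis we add throughout, which is in any case forced by the conclusion (a continuous $\tau$ on a compact manifold is bounded, so $s\mapsto s+\tau(X_s(\cdot))$ is an increasing self–homeomorphism of each leaf $\cong\R$). Assume henceforth $\dim M\geq 2$. Then no leaf is a circle, so every leaf of $\W$ is a line; fix a continuous flow $X_t$ on $M$ whose orbits are the leaves of $\W$ (possible because $\W$ is orientable). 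This flow is nonsingular, and — this is the point — it is a \emph{minimal flow}, since its orbits are precisely the (dense) leaves of $\W$. As each leaf is injectively parametrized by $X_t$, for every $x\in M$ there is a unique $\tau(x)\in\R$ with $X_{\tau(x)}(x)=f(x)$; this defines $\tau\colon M\to\R$, and it remains to prove that $\tau$ is continuous.

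Routine preliminaries. If $x_n\to x$ and $t_\ast$ is a finite accumulation point of $(\tau(x_n))_n$, then $X_{t_\ast}(x)=\lim X_{\tau(x_n)}(x_n)=\lim f(x_n)=f(x)=X_{\tau(x)}(x)$, so $t_\ast=\tau(x)$ by injectivity of $t\mapsto X_t(x)$. Hence $\tau$ is continuous at $x$ if and only if it is bounded on some neighbourhood of $x$. The sets $F_N=\{x:|\tau(x)|\leq N\}$ are closed — $F_N$ is the image under the projection $M\times[-N,N]\to M$ of the compact set $\{(x,t)\in M\times[-N,N]:X_t(x)=f(x)\}$ — and cover $M$, so by the Baire category theorem the set $C$ of continuity points of $\tau$, which equals $\bigcup_N\operatorname{int}(F_N)$, is open, dense, and in particular nonempty.

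The main step. Write $C=C^+\cap C^-$, where $C^+$ (resp.\ $C^-$) is the set of points near which $\tau$ is bounded above (resp.\ below); each contains $C$, hence is nonempty. I claim $M\setminus C^+$ is closed and positively $X_t$–invariant. Closedness is clear. For invariance, let $x\notin C^+$ and pick $x_n\to x$ with $\tau(x_n)\to+\infty$. For each $n$ let $\psi_n\colon\R\to\R$ be the increasing homeomorphism with $f(X_s(x_n))=X_{\psi_n(s)}(x_n)$ for all $s$ (increasing because $f$ preserves leaf orientation); then $\psi_n(0)=\tau(x_n)$ and $\tau(X_s(x_n))=\psi_n(s)-s$, so for every fixed $t\geq 0$ one has $\tau(X_t(x_n))=\psi_n(t)-t\geq \tau(x_n)-t\xrightarrow{n}+\infty$. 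Since $X_t(x_n)\to X_t(x)$, this shows $X_t(x)\notin C^+$. Now invoke that in a minimal flow on a compact space every forward orbit is dense: for any $a$, the $\omega$–limit set $\omega(a)$ is a nonempty closed flow–invariant subset, hence equals $M$, so $\overline{\{X_t(a):t\geq 0\}}=M$. Therefore any nonempty closed positively invariant set equals $M$; since $C^+\neq\emptyset$, this forces $M\setminus C^+=\emptyset$, i.e.\ $C^+=M$. The mirror argument (with $\tau(x_n)\to-\infty$ and backward orbits, which are likewise dense) gives $C^-=M$. Hence $C=M$ and $\tau$ is continuous, completing the proof with this choice of $X_t$ and $\tau$.

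Where the difficulty lies. The first two paragraphs are standard (uniqueness of $\tau$ from one–dimensionality of the leaves, a Baire argument, and a semicontinuity computation); the crux is the observation in the third paragraph that ``being locally bounded above'' for $\tau$ propagates along forward $X_t$–orbits — once this is seen, the hypothesis that $\W$ is minimal, read as minimality of the flow $X_t$, immediately forces $C^+=M$. The orientation hypothesis on $f$ cannot be dropped (a reflection of $S^1$ gives a counterexample), and it is used precisely to know that $\psi_n$ is increasing.
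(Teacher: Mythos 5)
Your mechanism for continuity is genuinely different from the paper's, and its core is sound: the identity $\tau(X_s(x_n))=\psi_n(s)-s$, the propagation of local unboundedness of $\tau$ along orbits, and minimality (every $\omega$-limit set is $M$, so a nonempty closed positively invariant set is $M$) together force $C^+=C^-=M$. The paper argues instead quantitatively: Baire gives $N$ with $\operatorname{int}\{|\tau|\le N\}\neq\emptyset$, minimality gives a uniform $c>0$ so that every orbit segment of length $c$ meets that set, and a triangle inequality along the leaf then bounds $|\tau|$ by $2N+2c$ on all of $M$; continuity then follows from boundedness exactly as in your second paragraph. Your soft argument buys a conceptual picture (a dichotomy of open sets swept out by the flow), the paper's buys an explicit global bound and, importantly, independence from any orientation behaviour of $f$.

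That last point is where your write-up has a genuine gap relative to the statement: you prove the proposition only under the additional hypothesis that $f$ preserves the orientation of each leaf, which is not among the assumptions (only $\W$ is assumed orientable), and you use it essentially --- monotonicity of $\psi_n$ is exactly what gives $\tau(X_t(x_n))\ge\tau(x_n)-t$ for $t\ge 0$. Your justification for adding it (``forced by the conclusion'') is circular as a reduction: it shows the conclusion implies the extra hypothesis, not that the stated hypotheses allow you to assume it. In the relevant situation (minimality with all leaves lines, which is what the paper's proof uses) orientation preservation is indeed automatic, but only as a \emph{consequence} of the proposition; the paper's proof never invokes it, which is why the statement can omit it. Your $S^1$-reflection remark concerns only the degenerate case where the unique leaf is a circle (implicitly excluded by the paper, whose proof asserts every leaf is a line), so it does not show the hypothesis is needed in the case at hand. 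Fortunately your route repairs easily: on a leaf where $f$ reverses orientation, $\psi_n$ is decreasing and the same computation gives $\tau(X_t(x_n))=\psi_n(t)-t\ge\tau(x_n)$ for all $t\le 0$; passing to a subsequence along which the orientation type of the leaves of the $x_n$ is constant, you conclude that $M\setminus C^+$ contains either the forward or the backward orbit of $x$, and since $\omega(x)=\alpha(x)=M$ by minimality, a nonempty $M\setminus C^+$ would again be all of $M$. With that modification your argument proves the proposition as stated.
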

\begin{proof}
Let $X_t:M\to M$ be a continuous flow with the leaves of $\W$ as orbits. Since $\W$ is minimal every leaf of $\W$ is a line. So there is an unambiguously defined function $\tau:M\to \mathbb{R}$ such that $f(x)=X_{\tau(x)}(x)$ for every $x\in M$. The goal is to show that $\tau$ is continuous.

Define $M_n=\{x\in M \text{ s.t. }|\tau(x)|\leq n\}$. Note that each $M_n$ is closed and that $M=\bigcup_n M_n$. By Baire's category theorem it follows that $M_N$ has non-empty interior for some $N>0$.

By the minimality of $\W$ there exists $c>0$ such that the segment $X_{[x,x+c]}(x)$ intersects $M_N$ for every $x\in M$. This is the key information that the minimality of $\W$ gives. 

Fix now $x\in M$ and consider the metric $d_\W$ in $\W(x)$ so that $d_\W(X_t(x), X_s(x))$ is equal to $|t-s|$ for every $t,s\in \mathbb{R}$. We know that $M_N\cap \W^c(x)$ is $c$-dense in $\W^c(x)$ for $d_{
\W}$, so there exist $y$ and $z$ in $\W(x)\cap M_N$ such that $x\in [y,z]_\W$ and $d_{\W}(y,z)\leq c$.

Since $d_\W(y,f(y))\leq N$ and $d_\W(z,f(z))\leq N$, by triangular inequality (two times) it follows that $d_\W(f(y),f(z))\leq 2N+c$. Note that $f(x)$ lies in $[f(y),f(z)]_\W$. Depending on which side of $\W(x)$ with respect to $x$ lies $f(x)$, it is immediate that $d_
\W(x,f(x))$ can be bounded by $d_\W(y,f(y))+d_\W(f(y),f(z))$ or by $d_\W(z,f(z))+d_\W(f(y),f(z))$. One obtains that $$d(x,f(x))\leq 2N+2c.$$ As $x\in M$ was arbitrary, we have shown that $M_{2N+2c}=M$. That is, $\tau$ is bounded by $2N+2c$. Continuity of $\tau$ is immediate from this since every leaf of $\W$ is a line.
\end{proof}

We do not know if a similar result as the above is true in general provided that $\W$ is a transitive foliation, or if a counterexample can be made.

\end{document}